\newcommand{\textcyr}[1]{%
 {\fontencoding{OT2}\fontfamily{wncyr}\fontseries{m}\fontshape{n}\selectfont #1}}
\newcommand{\Sha}{{\mbox{\textcyr{Sh}}}}
\newcommand{\Bcyr}{{\mbox{\textcyr{B}}}}
\newcommand{\showappendix}[1]{{#1}}
\newcommand{\defi}[1]{\textsf{#1}} 
\def\act#1#2%
\newcommand{\Z}{{\mathbb Z}}
\newcommand{\Q}{{\mathbb Q}}
\newcommand{\R}{{\mathbb R}}
\newcommand{\F}{{\mathbb F}}
\newcommand{\A}{{\mathbb A}}
\newcommand{\PP}{{\mathbb P}}
\newcommand{\G}{{\mathbb G}}
\newcommand{\OO}{{\mathcal O}}
\newcommand{\Abar}{{\overline{A}}}
\newcommand{\Kbar}{{\overline{K}}}
\newcommand{\kbar}{{\overline{k}}}
\newcommand{\Xbar}{{\overline{X}}}
\newcommand{\Vbar}{{\overline{V}}}
\newcommand{\calA}{{\mathcal A}}
\newcommand{\calB}{{\mathcal B}}
\newcommand{\calE}{{\mathcal E}}
\newcommand{\calL}{{\mathcal L}}
\newcommand{\To}{\longrightarrow}
\newcommand{\isom}{\simeq}
\DeclareMathOperator{\Num}{Num}
\DeclareMathOperator{\Hom}{Hom}
\DeclareMathOperator{\im}{image}
\DeclareMathOperator{\divv}{div}
\DeclareMathOperator{\coker}{coker}
\DeclareMathOperator{\Gal}{Gal}
\DeclareMathOperator{\Res}{Res}
\DeclareMathOperator{\Br}{Br}
\DeclareMathOperator{\Sym}{Sym}
\DeclareMathOperator{\Div}{Div}
\DeclareMathOperator{\Pic}{Pic}
\DeclareMathOperator{\Alb}{Alb}
\DeclareMathOperator{\HH}{H}
\DeclareMathOperator{\NS}{NS}
\DeclareMathOperator{\Supp}{Supp}
\DeclareMathOperator{\et}{et}
\DeclareMathOperator{\CH}{CH}
\newcommand{\kk}{\mathbf{k}}
\newcommand{\ind}{\operatorname{ind}}
\newcommand{\per}{\operatorname{per}}
\newcommand{\BMd}[1]{\textup{BM}_{#1}}
\newcommand{\BMdd}[1]{\textup{BM}^{\perp}_{#1}}
\newcommand{\BMprimary}[1]{\BMd{#1}}
\newcommand{\BMcoprime}[1]{\BMdd{#1}}
\newtheorem{Theorem}{Theorem}[section]
\newtheorem{Lemma}[Theorem]{Lemma}
\newtheorem{Proposition}[Theorem]{Proposition}
\newtheorem{Corollary}[Theorem]{Corollary}
\newtheorem{Example}[Theorem]{Example}
\newtheorem{Question}[Theorem]{Question}
\numberwithin{equation}{section}
\theoremstyle{remark}
\newtheorem{Remark}[Theorem]{Remark}
\begin{document}
\showappendix

\title{Degree and the Brauer-Manin obstruction}

\author{Brendan Creutz}
\address{School of Mathematics and Statistics, University of Canterbury, Private Bag 4800, Christchurch 8140, New Zealand}
\email{brendan.creutz@canterbury.ac.nz}
\urladdr{http://www.math.canterbury.ac.nz/\~{}bcreutz}

\author{Bianca Viray}
\address{University of Washington, Department of Mathematics, Box 354350, Seattle, WA 98195, USA}
\email{bviray@math.washington.edu}
\urladdr{http://math.washington.edu/\~{}bviray}

\author{an appendix by Alexei N. Skorobogatov}
\address{Department of Mathematics, South Kensington Campus,
Imperial College London, SW7 2BZ England, U.K. -- and --
Institute for the Information Transmission Problems,
Russian Academy of Sciences, 19 Bolshoi Karetnyi, Moscow, 127994
Russia}
\email{a.skorobogatov@imperial.ac.uk}

\thanks{The second author was partially supported by NSA Young Investigator's Award \#H98230-15-1-0054, NSF CAREER grant DMS-1553459, and a University of Canterbury Visiting Erskine Fellowship.}

\subjclass[2010]{14G05; 11G35, 14F22}
\keywords{Brauer-Manin obstruction, period, rational points}

\begin{abstract}
	Let $X \subset \PP^n_k$ be a smooth projective variety of degree $d$ over a number field $k$ and suppose that $X$ is a counterexample to the Hasse principle explained by the Brauer-Manin obstruction. We consider the question of whether the obstruction is given by the $d$-primary subgroup of the Brauer group, which would have both theoretic and algorithmic implications.
	 We prove that this question has a positive answer in the case of torsors under abelian varieties, Kummer {surfaces} and (conditional on finiteness of Tate-Shafarevich groups) bielliptic surfaces. In the case of Kummer surfaces we show, more specifically, that the obstruction is already given by the $2$-primary torsion, {and indeed that this holds for higher-dimensional Kummer varieties as well.} We construct a conic bundle over an elliptic curve that shows that, in general, the answer is no.
\end{abstract}

\maketitle

\section{Introduction}
    
    Let $X$ be a smooth projective and geometrically integral variety over a number field $k$.  Manin observed that any adelic point  $(P_v) \in X(\A_k)$ that is approximated by a $k$-rational point must satisfy relations imposed by elements of $\Br X$, the Brauer group of $X$~\cite{Manin1971}.  Indeed, for any element $\alpha \in \Br X:= \HH^2_{\et}(X, \G_m)$, the set of adelic points on $X$ that are orthogonal to $\alpha$, denoted $X(\A_k)^{\alpha}$, {is a closed set containing} the $k$-rational points of $X$.  In particular,
    \[
        X(\A_k)^{\Br} := \bigcap_{\alpha\in \Br X}X(\A_k)^{\alpha} = \emptyset
        \quad \Longrightarrow \quad
        X(k) = \emptyset.
    \]
     
     {   
    In this paper, we investigate whether it is necessary to consider the full Brauer group or whether one can determine \textit{a priori} a proper subgroup $B\subset \Br X$ that \defi{captures the Brauer-Manin obstruction} to the existence of rational points, in the sense that the following implication holds:
    \[
    	X(\A_k)^{\Br} = \emptyset \quad \Longrightarrow \quad X(\A_k)^B := \bigcap_{\alpha \in B} X(\A_k)^\alpha = \emptyset\,.
    \]
    This is of interest from both theoretical and practical perspectives. On the one hand, identifying the subgroups for which this holds may shed considerable light on the nature of the Brauer-Manin obstruction, and, on the other hand, knowledge of such subgroups can facilitate computation Brauer-Manin obstructions in practice.
    
    We pose the following motivating question.
    
	\begin{Question}\label{ques:1}
	 Suppose that $X \hookrightarrow \PP^n$ is embedded as a subvariety of degree $d$ in projective space. Does the $d$-primary subgroup of $\Br X$ capture the Brauer-Manin obstruction to rational points on $X$?
	\end{Question}

	More intrinsically, let us say that \defi{degrees capture the Brauer-Manin obstruction on $X$} if the $d$-primary subgroup of $\Br X$ captures the Brauer-Manin obstruction to rational points on $X$ for all integers $d$ that are the degree of some $k$-rational globally generated ample line bundle on $X$. Since any such line bundle determines a degree $d$ morphism to projective space and conversely, it is clear that the answer to Question \ref{ques:1} is affirmative when degrees capture the Brauer-Manin obstruction.

	\subsection{Summary of results}
		{In general, the answer to Question \ref{ques:1} can be no (see the discussion in \S\ref{sec:discussion}). However, there are many interesting classes of varieties for which the answer is yes.} We prove that degrees capture the Brauer-Manin obstruction for torsors under abelian varieties, for Kummer surfaces, and, assuming finiteness of Tate-Shafarevich groups of elliptic curves, for bielliptic surfaces. {We also deduce (from various results appearing in the literature) that degrees capture the Brauer-Manin obstruction for all geometrically rational minimal  surfaces.}
    
	    Assuming finiteness of Tate-Shafarevich groups, one can deduce the result for torsors under abelian varieties rather easily from a theorem of Manin (see Remark~\ref{Remark:ConditionalProof} and Proposition~\ref{prop:Bcyr}). In \S\ref{sec:AV} we unconditionally prove the following much stronger result.

	\begin{Theorem}
		\label{thm:MainTorsor}
            Let $X$ be a $k$-torsor under an abelian variety, let $B \subset \Br X$ be any subgroup, and let $d$ be any multiple of the period of $X$. In particular, $d$ could be taken to be the degree of a $k$-rational globally generated ample line bundle.
	If $X(\A)^B = \emptyset$, then $X(\A)^{B[d^\infty]} = \emptyset$, where $B[d^\infty] \subset B$ is the $d$-primary subgroup of $B$.
	\end{Theorem}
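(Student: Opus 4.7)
The plan is to exploit the $A(\A_k)$-translation action on $X(\A_k)$ to modify a $B[d^\infty]$-orthogonal adelic point into a fully $B$-orthogonal one, contradicting the assumption $X(\A_k)^B = \emptyset$. The key structural input is that for any $\alpha \in \Br X$ and any adelic point $P$, the translation difference
\[
\phi_\alpha(a) := \langle \alpha, P + a\rangle_{\mathrm{BM}} - \langle \alpha, P\rangle_{\mathrm{BM}}
\]
defines a continuous homomorphism $\phi_\alpha: A(\A_k) \to \Q/\Z$ independent of $P$, taking values in $\tfrac{1}{n}\Z/\Z$ when $\alpha$ is $n$-torsion. This is standard for torsors under abelian varieties and formalizes the fact that two adelic points on $X$ differ by an $A(\A_k)$-translation.

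Given $(P_v) \in X(\A_k)^{B[d^\infty]}$, I would try to produce $a = d^N b$ for a suitable $N$ and $b \in A(\A_k)$, such that $(P_v) + a \in X(\A_k)^B$. A continuity argument reduces the question to controlling finitely many Brauer elements at a time, and the orders of these split as products of a $d$-primary and a prime-to-$d$ part. Choosing $N$ larger than the $d$-adic exponents involved, we get $\phi_\alpha(d^N b) = d^N\phi_\alpha(b) = 0$ automatically for each $\alpha \in B[d^\infty]$, preserving $B[d^\infty]$-orthogonality regardless of $b$. What remains is to choose $b$ so that
\[
d^N \phi_\beta(b) = -\langle \beta, (P_v)\rangle_{\mathrm{BM}}
\]
for every $\beta \in B$ of prime-to-$d$ order; since $d^N$ is a unit modulo these orders, this is equivalent to a surjectivity statement for the evaluation map $b \mapsto (\phi_\beta(b))_\beta$.

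The main obstacle is establishing this surjectivity, i.e.\ showing that any prescribed collection of values on the prime-to-$d$ part of $B$ can be realized by a single adelic translation. I would attack this via Poitou-Tate duality applied to $A[n]$ for $n$ coprime to $d$, combined with the observation that the hypothesis that the period divides $d$ is precisely what makes the prime-to-$d$ components of $\Br X$ and $\Br A$ (together with the respective translation pairings) compatible: since $d \cdot [X] = 0$ in $\HH^1(k,A)$, the class $[X]$ is trivial in the prime-to-$d$ part of $\HH^1(k,A)$, so on prime-to-$d$ Brauer elements the torsor $X$ behaves exactly like its trivial twist $A$, and classical duality for abelian varieties supplies the required non-degeneracy. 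I expect this duality step to be the technical heart of the argument; the translation/lifting machinery above is essentially bookkeeping around it.
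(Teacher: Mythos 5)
Your strategy---fix a $B[d^\infty]$-orthogonal adelic point and translate it by a suitable element of $A(\A_k)$ to kill the prime-to-$d$ obstructions---is genuinely different from the paper's, which proves the statement (via Theorem~\ref{thm:PreciseMainPHS}) by pulling back along a self-covering $\rho\colon X\to X$ lying over $[n^r]\colon A\to A$ with $n\equiv 1\pmod{Pd}$ a power of the prime-to-$d$ part of the exponent of $B$, and tracking the action of $\rho^*$ on the filtration $\Br_0\subset\Br_{1/2}\subset\Br_1\subset\Br$ (Lemma~\ref{lem:killntors}). Unfortunately your version has a genuine gap at its foundation: the claim that $\phi_\alpha(a)=\langle\alpha,P+a\rangle-\langle\alpha,P\rangle$ is a homomorphism in $a$, independent of $P$, for \emph{every} $\alpha\in\Br X$ is false. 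Additivity (modulo constants) holds only for classes in $\Br_{1/2}X$, i.e.\ those whose image in $\HH^1(k,\Pic\Xbar)$ comes from $\HH^1(k,\Pic^0\Xbar)$. For classes with nontrivial image in $\HH^1(k,\NS\Xbar)$, and for transcendental classes, the evaluation map is quadratic rather than linear in the translation variable---this is exactly the phenomenon behind the fact, used in Lemma~\ref{lem:killntors}, that $[n]$ acts as $n$ on $\Pic^0\Xbar$ but as $n^2$ on $\NS\Xbar$ and on $\Br\Xbar$. One can try to absorb the quadratic defect by noting that you translate by $d^Nb$ and $d^{2N}$ also annihilates $B[d^\infty]$, but then the conditions you must solve for the prime-to-$d$ classes become quadratic, and in any case you have not established the required structure of evaluation maps for transcendental classes, which is the hard part.

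The second gap is that the surjectivity statement you reduce to is false, so the reduction does not close the argument. For $\beta\in\Bcyr(X)=\ker\bigl(\Br_1X\to\bigoplus_v\Br_1X_v/\Br_0X_v\bigr)$ the evaluation is constant on $X(\A_k)$, hence $\phi_\beta\equiv 0$ and the map $b\mapsto(\phi_\beta(b))_\beta$ cannot be surjective once $B$ contains such a class of prime-to-$d$ order. What you actually need is that the \emph{particular} values $-\langle\beta,(P_v)\rangle$ lie in the image of the (not necessarily linear) translation maps. For the $\Bcyr$ part this follows from bilinearity of the Cassels--Tate pairing together with $d\cdot[X]=0$, but making that unconditional (without finiteness of $\Sha$) and handling the classes outside $\Br_{1/2}X$ requires precisely the control that the paper obtains geometrically from the self-covering. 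So the step you describe as bookkeeping is where the content of the theorem lives, and the proposal as written does not supply it.
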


	This not only shows that degrees capture the Brauer-Manin obstruction (apply the theorem with $B = \Br X$), but also that the Brauer classes with order relatively prime to $d$ cannot provide \emph{any} obstructions to the existence of rational points.

      \begin{Remark}
    As one ranges over all torsors of period $d$ under all abelian varieties over number fields, elements of arbitrarily large order in of $(\Br V)[d^\infty]$ are required to produce the obstruction (see Proposition~\ref{prop:WholePPrimary}).
    \end{Remark}
    
	{We use Theorem \ref{thm:MainTorsor} to deduce that degrees capture the Brauer-Manin obstruction on certain quotients of torsors under abelian varieties. This method is formalized in Theorem~\ref{thm:QuotientOfPHS} and applied in \S\ref{sec:Bielliptic} to prove the following.}

    \begin{Theorem}\label{thm:MainBielliptic}
        Let $X$ be a bielliptic surface and assume that the Albanese torsor $\Alb^1_X$ is not a nontrivial divisible element in the Tate-Shafarevich group, $\Sha(k,\Alb^0_X)$. Then degrees capture the Brauer-Manin obstruction to rational points on $X$.
    \end{Theorem}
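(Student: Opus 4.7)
The plan is to reduce to Theorem~\ref{thm:MainTorsor} via the Albanese morphism $f\colon X \to \Alb^1_X$, noting that $\Alb^1_X$ is a torsor under the elliptic curve $\Alb^0_X$. We may assume $X(\A_k) \neq \emptyset$, so $\Alb^1_X(\A_k) \neq \emptyset$, and hence $\Alb^1_X$ represents a class in $\Sha(k, \Alb^0_X)$. The argument then splits according to whether this class is zero.

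Suppose first that $\Alb^1_X$ represents a nontrivial class in $\Sha(k, \Alb^0_X)$. By hypothesis this class is not divisible, so by the classical Cassels--Tate duality for torsors under abelian varieties we have $\Alb^1_X(\A_k)^{\Br} = \emptyset$. Applying Theorem~\ref{thm:MainTorsor} to the torsor $\Alb^1_X$ with $B = \Br \Alb^1_X$ and $d_0$ the period of $\Alb^1_X$, the $d_0$-primary part of $\Br \Alb^1_X$ already obstructs. Pulling back via $f^*$ produces a $d_0$-primary obstruction on $X$. One then checks that $d_0 \mid d$ for any degree $d$ of a globally generated ample $k$-line bundle on $X$, using restriction to a fiber and intersection theory on the surface.

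If instead $\Alb^1_X(k) \neq \emptyset$, I would invoke Theorem~\ref{thm:QuotientOfPHS}. A rational point of $\Alb^1_X$ trivializes the Albanese, and one realizes $X$ (after possibly passing to a finite étale cover) as a quotient $T/G$ of a $k$-torsor $T$ under an abelian surface isogenous to $\Alb^0_X \times F'$, where $F'$ is the Jacobian of the generic Albanese fiber; over $\bar k$ this recovers the familiar presentation $(E \times F)/G$ of a bielliptic surface. Theorem~\ref{thm:QuotientOfPHS} then transfers the $d$-primary conclusion from $T$ — where Theorem~\ref{thm:MainTorsor} applies directly — down to $X$.

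The two main obstacles are (i) verifying the hypotheses of Theorem~\ref{thm:QuotientOfPHS}, in particular producing $T$ cleanly from the bielliptic structure, a descent issue since the universal cover of a bielliptic surface need not be defined over $k$; and (ii) establishing the divisibility of any embedding degree $d$ of $X$ by the periods relevant in each case. The second amounts to intersection-theoretic computations on $X$ tied to its Albanese fibration, using that an ample $k$-rational line bundle restricts to a fiber to give a line bundle of degree divisible by the fiber's period.
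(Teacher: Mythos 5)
Your first case (where $\Alb^1_X$ is a nontrivial, non-divisible class in $\Sha(k,\Alb^0_X)$) matches the paper: Manin's theorem gives $\Alb^1_X(\A_k)^{\Br}=\emptyset$, and Corollary~\ref{cor:albob} finishes it; note that the divisibility $\per(X)\mid\deg(\calL)$ is automatic from the $0$-cycle argument of Lemma~\ref{lem:perioddividesdegree}, so no intersection theory is needed there. Your second case also starts in the right direction, and the descent issue you flag is genuinely there but fillable: the paper does not use the geometric presentation $(A\times B)/G$ directly, but rather the $\mu_n$-torsor $V\to X$ attached to the canonical sheaf (which is $k$-rational $n$-torsion in $\Pic X$, $n\in\{2,3,4,6\}$); by Basile--Skorobogatov $V$ is a torsor under an abelian surface, and Theorem~\ref{thm:QuotientOfPHS}(2) applies.

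The genuine gap is in your step (ii). What the quotient argument delivers (Lemma~\ref{lem:bielliptic}) is that $X$ satisfies $\BMprimary{n\cdot\per(X)}$, so you must show $\Supp(n)\subset\Supp(\deg\calL)$ for every globally generated ample $\calL$, not merely $\per(X)\mid\deg\calL$. For $n\in\{2,4\}$ this follows from adjunction (the N\'eron--Severi lattice is even), but for $n\in\{3,6\}$ the statement ``$3\mid\deg\calL$ for all ample $k$-rational $\calL$'' is simply false in general, so no restriction-to-a-fiber or intersection computation can establish it: restricting to an Albanese fiber only controls divisibility by the period of that fiber, which has nothing to do with $n$. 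The paper's proof confronts exactly this: writing $\Num(\Xbar)=\Z A_0\oplus\Z\frac{n}{\gamma}B$ (Serrano), it introduces the minimal integers $e_A,e_B$ such that $e_AA_0$ and $e_B\frac{n}{\gamma}B$ are represented by $k$-rational divisors; if $e_Ae_B>1$ every $k$-rational ample class has degree divisible by $6$ and one is fine, but if $e_A=e_B=1$ one must instead prove directly that $X(k)\neq\emptyset$ (via Galois descent on the reduced multiple fibers of the second fibration $\Phi\colon X\to\PP^1$ and a construction of a degree-$1$ zero-cycle on a genus-one multiple fiber, using the canonical class formula $2F_1+2F_2+2F_3-2A$). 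This rational-point construction is the heart of the argument for $n\in\{3,6\}$ and is entirely absent from your proposal; without it the proof does not close.
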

    
    \begin{Remark}
        As shown by Skorobogatov~\cite{Skorobogatov}, the Brauer-Manin obstruction is insufficient to explain all failures of the Hasse principle on bielliptic surfaces.  Therefore, if $X$ is a bielliptic surface of degree $d$, then it is possible that there are adelic points orthogonal to the $d$-primary subgroup of $\Br X$ even though $X(k) = \emptyset$.  However, Theorem~\ref{thm:MainBielliptic} shows that in this case one also has that $X(\A_k)^{\Br} \neq \emptyset$.
    \end{Remark}

 Before stating our results on Kummer varieties we fix some notation. We say that $X$ satisfies $\BMprimary{d}$ if the $d$-primary subgroup of $\Br X$ captures the Brauer-Manin obstruction, i.e., if the following implication holds:
        \begin{equation*}
	    	X(\A_k)^{\Br} = \emptyset \quad \Longrightarrow \quad X(\A_k)^{\Br_d} := \bigcap_{\alpha \in (\Br X)[d^\infty]} X(\A_k)^\alpha = \emptyset\,.
    	\end{equation*}
    We say that $X$ satisfies $\BMcoprime{d}$ if there is no prime-to-$d$ Brauer-Manin obstruction, i.e., if the following implication holds 
    \begin{equation*}\label{eq:BMdperp}
        X(\A_k)\neq\emptyset
        \qquad \Longrightarrow \qquad
        X(\A_k)^{\Br_{d^{\perp}}} := \bigcap_{\alpha\in (\Br X)[d^{\perp}]}X(\A_k)^{\alpha} \neq \emptyset\,,
    \end{equation*}
    where $(\Br X)[d^\perp]$ denotes the subgroup of elements of order prime to $d$. {These properties are birational invariants of smooth projective varieties (see Lemma \ref{lem:birationalinvariance}).}
       
    \begin{Remark}
        Note that $\BMprimary{1}$ and $\BMcoprime{1}$ are equivalent; they hold if and only if 
        \[
            X(\A_k)\neq \emptyset \Longleftrightarrow X(\A_k)^{\Br}\neq\emptyset.
        \]
        {More generally, global reciprocity shows that the same is true of $\BMprimary{d}$ and $\BMcoprime{d}$ whenever $(\Br X)[d^\infty] = (\Br k)[d^\infty]$ In general, however, $\BMprimary{d}$ and $\BMcoprime{d}$ are logically independent.}
    \end{Remark}

	{The following theorem is proved in \S\ref{sec:TwistedKummers}. We refer to that section for the definition of a Kummer variety.}

    \begin{Theorem}\label{thm:MainKummer}
        Kummer varieties satisfy $\BMprimary{2}$.
    \end{Theorem}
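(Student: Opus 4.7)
The plan is to reduce to Theorem~\ref{thm:MainTorsor} by means of a $2$-descent. Recall the setup: a Kummer variety $X$ arises from a $2$-covering $\pi\colon T\to A$ of an abelian variety $A$ by blowing up $T$ along the lift of the $A[2]$-torsor fixed by $[-1]_A$ to obtain $\widetilde T$, and then taking the quotient $X := \widetilde T/\iota$ by the induced involution. The torsor $T$ has period dividing $2$, and the same holds for every twist $T^a$ by a class $a\in H^1(k, A[2])$. The descent covers used in what follows are the maps $\widetilde T^a \to X$ parametrized by $a \in H^1(k, A[2])$.

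Assume $(P_v) \in X(\A_k)^{(\Br X)[2^\infty]}$; we must produce an adelic point orthogonal to all of $\Br X$. The first step is to carry out a $2$-descent: the order-$2$ Brauer classes coming from the Hochschild--Serre spectral sequence of each cover $\widetilde T^a \to X$ control, in the standard way, the obstruction to lifting $(P_v)$, so orthogonality of $(P_v)$ to $(\Br X)[2]$ produces some $a$ and an adelic lift on $\widetilde T^a$. Since $\widetilde T^a \to T^a$ is a blow-up (an isomorphism outside a codimension-two locus), we may push this lift to an adelic point $(Q_v) \in T^a(\A_k)$ mapping to $(P_v)$.

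The second step is to show that $(Q_v)$ is orthogonal to all of $(\Br T^a)[2^\infty]$. The idea is that $\pi^*\colon (\Br X)[2^\infty] \to (\Br T^a)[2^\infty]$ has cokernel contained in the image of $\Br k$, which does not affect Brauer--Manin orthogonality by global reciprocity. The near-surjectivity is a Hochschild--Serre computation for the double cover $\widetilde T^a \to X$ (whose $H^p$ terms for $p > 0$ are $2$-torsion), combined with the known structure of the $2$-primary Brauer group of $A$. Granting this, one applies Theorem~\ref{thm:MainTorsor} in its contrapositive form to $T^a$ with $d = 2$, a multiple of the period of $T^a$, to conclude that $T^a(\A_k)^{\Br T^a} \neq \emptyset$; pushing the resulting adelic point back down to $X$ produces an element of $X(\A_k)^{\Br X}$ as required.

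I expect the main obstacle to be the Brauer-group comparison in the second step. Quantifying the cokernel of $\pi^*$ on $2$-primary parts is delicate and requires an explicit understanding of the transcendental and algebraic parts of $\Br T^a$ via the structure theory for Brauer groups of abelian varieties (notably Skorobogatov--Zarhin), together with an analysis of the exceptional locus of $\widetilde T^a \to T^a$, whose components are projective-space bundles and therefore contribute no new Brauer classes. The $2$-descent step itself, while technical in higher dimension, should follow the same template as in the Kummer surface case.
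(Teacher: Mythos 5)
Your overall strategy---descend through quadratic twists of the double cover $\widetilde T^a \to X$ and then invoke the theorem on torsors under abelian varieties---is the same as the paper's (case (1) of Theorem~\ref{thm:QuotientOfPHS} combined with Theorem~\ref{thm:PreciseMainPHS}). However, your second step contains a genuine gap. You want the lifted adelic point $(Q_v)\in T^a(\A_k)$ to be orthogonal to \emph{all} of $(\Br T^a)[2^\infty]$, and for this you assert that $\pi^*\colon (\Br X)[2^\infty]\to(\Br T^a)[2^\infty]$ is surjective modulo constant classes. This is not merely ``delicate''; it fails in general. Since $\HH^1(X,\OO_X)=0$ for a Kummer variety, $\Pic\Xbar$ is a torsion-free lattice and $\Br_1X/\Br_0X$ injects into $\HH^1(k,\NS\Xbar)$, a finite group; by contrast $\Br_1T^a/\Br_0T^a$ receives the image of $\HH^1(k,\Pic^0\overline{T^a})$, i.e.\ of $\HH^1(k,\widehat{A})$, which typically contains $2$-primary classes of unbounded order that cannot be pullbacks from $X$. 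The Skorobogatov--Zarhin comparison of $\Br\Xbar$ with $\Br\Abar$ concerns only the transcendental part and does not rescue this. So you cannot conclude that $(Q_v)$ is orthogonal to $(\Br T^a)[2^\infty]$, and Theorem~\ref{thm:MainTorsor} applied with $B=\Br T^a$ does not get off the ground.

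The repair is to notice that you never need the full Brauer group of $T^a$: apply Theorem~\ref{thm:MainTorsor} with the subgroup $B=(\pi^a)^*(\Br X)$, for which $B[2^\infty]=(\pi^a)^*\bigl((\Br X)[2^\infty]\bigr)$ by \cite[Prop.~1.1]{ISZ}. Orthogonality of the lift to $B[2^\infty]$ then yields a point of $T^a(\A_k)^{B}$, which pushes down to a point of $X(\A_k)^{\Br}$ by functoriality; this is precisely why Theorems~\ref{thm:MainTorsor} and~\ref{thm:PreciseMainPHS} are stated for arbitrary subgroups $B$. A second, smaller issue: your first step is not ``standard'' descent, because $\widetilde T^a\to X$ is a \emph{ramified} double cover rather than a torsor. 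The correct input is Proposition~\ref{prop:SSD} (the Skorobogatov--Swinnerton-Dyer argument), which requires the finiteness of $(\Br X/\Br_0X)[2^\infty]$---supplied by \cite[Corollary 2.8]{SZ16} and nowhere invoked in your sketch---and produces a lift only of a small perturbation of $(P_v)$, which is still sufficient since one only needs to exhibit \emph{some} adelic point orthogonal to $\Br X$.
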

    {Since the Picard lattice of any K3 surface is even, it is a straightforward consequence of this theorem that degrees capture the Brauer-Manin obstruction on Kummer surfaces.}

    Theorem~\ref{thm:MainKummer} complements the recent result of Skorobogatov and Zarhin that Kummer varieties satisfy $\BMcoprime{2}$~\cite[Theorem 3.3]{SZ16}. {As remarked above, this is logically independent from Theorem~\ref{thm:MainKummer} except when $(\Br X)[2^\infty]$ consists solely of constant algebras. In \cite[Theorem 4.3]{SZ16} it is shown that $(\Br X)[2^\infty] = (\Br_0 X)[2^\infty]$ for Kummer varieties attached to $2$-coverings of products of hyperelliptic Jacobians with large Galois action on $2$-torsion. This is the case of interest in~\cite{HarpazSkorobogatov16} where it is shown that (conditionally on finiteness of Tate-Shafarevich groups) some Kummer varieties of this kind satisfy the Hasse principle. Skorobogatov and Zarhin have shown that these Kummer varieties have no nontrivial $2$-primary Brauer classes \cite[\S\S 4 \& 5]{SZ16}.  Given this, both Theorem~\ref{thm:MainKummer} and \cite[Theorem 3.3]{SZ16} provide an explanation for the absence of any condition on the Brauer group in~\cite[Thms. A and B]{HarpazSkorobogatov16}.

 \showappendix{After seeing a draft of this paper, Skorobogatov noted that it is possible to use our results (specifically Lemma~\ref{lem:killntors}) to extend the proof of \cite[Theorem 3.3]{SZ16} to obtain the following common generalization.

\begin{Theorem}\label{thm:KummerGeneral}
	Let $X$ be a Kummer variety over a number field $k$ and suppose $B \subset \Br X$ is a subgroup. If $X(\A)^B = \emptyset$, then $X(\A)^{B[2^\infty]} = \emptyset$.
\end{Theorem}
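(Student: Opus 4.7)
My plan is to prove Theorem~\ref{thm:KummerGeneral} by adapting the proof of \cite[Theorem 3.3]{SZ16}, which handles the special case $B = (\Br X)[2^\perp]$, combined with Lemma~\ref{lem:killntors} and Theorem~\ref{thm:MainTorsor}. The argument is by contrapositive: assume $(P_v) \in X(\A)^{B[2^\infty]}$ is nonempty and produce an adelic point in $X(\A)^B$. Recall that a Kummer variety $X$ is associated to a $2$-covering $T \to A$ of an abelian variety $A$, with $T$ a $k$-torsor under $A$ of period dividing $2$; there is a degree-$2$ rational map $\pi\colon T \dashrightarrow X$ whose pullback $\pi^*\colon \Br X \to \Br T$ is well-defined (the indeterminacy locus has codimension at least two). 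The Skorobogatov-Zarhin argument proceeds by a descent: after twisting $T$ by a cohomology class $c$ determined by the local behavior of $(P_v)$, one lifts $(P_v)$ to an adelic point $(Q_v) \in T^c(\A)$ and then exploits the fact that $T^c$ has period dividing $2$ to construct an adelic point on $X$ orthogonal to $(\Br X)[2^\perp]$.

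To obtain the common generalization, I would follow the same descent to produce a lift $(Q_v) \in T^c(\A)$, and then consider the subgroup $B'' := \pi_c^*(B[2^\infty]) + (\pi_c^*(B))[2^\perp] \subseteq \Br T^c$. Using the primary decomposition of the torsion group $B$, one checks that $B''[2^\infty] = \pi_c^*(B[2^\infty])$ and that $\pi_c^*(B) \subseteq B''$. By functoriality of the Brauer-Manin pairing, $(Q_v)$ is orthogonal to $\pi_c^*(B[2^\infty]) = B''[2^\infty]$, so applying Theorem~\ref{thm:MainTorsor} to the torsor $T^c$, the subgroup $B''$, and $d = 2$ produces $(Q_v') \in T^c(\A)^{B''}$. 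Descending $(Q_v')$ via $\pi_c$ then yields $(P_v') \in X(\A)^B$, contradicting our hypothesis.

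The main obstacle is the descent step---producing the lift $(Q_v) \in T^c(\A)$ of $(P_v)$---because the fibers of $\pi_c$ need not contain $k_v$-points in general, so one must carefully choose the twist $c$ and possibly modify $(P_v)$ at finitely many places. This is where Lemma~\ref{lem:killntors} is deployed: it ensures that the local modifications required for the descent can be arranged without destroying the orthogonality of $(P_v)$ with respect to $B[2^\infty]$, which is essential for the subsequent application of Theorem~\ref{thm:MainTorsor}.
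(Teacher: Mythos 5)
Your overall architecture --- find one global twist $T^c$ carrying an adelic lift of $(P_v)$, apply Theorem~\ref{thm:MainTorsor} upstairs to $B''$, and push back down --- is not the paper's argument, and it has a genuine gap at exactly the step you flag as the main obstacle. To produce a \emph{single} quadratic twist of the covering that is everywhere locally soluble with an adelic point above (a modification of) $(P_v)$, one must run the Skorobogatov--Swinnerton-Dyer fibration argument (Proposition~\ref{prop:SSD}); the obstruction to gluing the local twisting classes $\alpha_v$ into a global one is measured by the quaternion algebras $(c,f)_2\in(\Br X)[2]$ attached to the branch locus, and the descent goes through only if $(P_v)$ is orthogonal to those particular classes. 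They have no reason to lie in $B$ --- this is precisely why the proof of Theorem~\ref{thm:MainKummer} must assume $(P_v)\in X(\A_k)^{\Br_2}$, i.e.\ orthogonality to all of $(\Br X)[2^\infty]$, rather than to a subgroup. Lemma~\ref{lem:killntors} cannot repair this: it produces a self-map of the torsor that kills $m$-torsion and fixes $d$-torsion of $\Br V/\Br_0 V$, and says nothing about choosing the twist or modifying $(P_v)$ locally. (Your later steps --- the decomposition $B''=\pi_c^*(B[2^\infty])+(\pi_c^*(B))[2^\perp]$, the application of Theorem~\ref{thm:MainTorsor} with $d=2$, and functoriality for the pushdown --- are fine, but they sit downstream of the broken step.) You have also mischaracterized \cite[Theorem 3.3]{SZ16}: that proof does not descend to a single everywhere-locally-soluble twist.

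The appendix avoids the global descent entirely. Given $(P_v)\in X(\A_k)^B$ with $B\subset(\Br X)[2^\infty]$ finite, it chooses for each place $v$ a twist $Y_{\alpha_v}$ (defined over $k$ by weak approximation, but \emph{varying with} $v$) together with a local lift $R_v$ of $P_v$; no single twist need be everywhere locally soluble. It then replaces $R_v$ by $M_v=[n^s]R_v$ for $n$ odd and $s$ as in Lemma~\ref{lem:appendix}, and pushes back down to $Q_v\in X(k_v)$. Lemma~\ref{lem:killntors} enters only here: it guarantees that $[n^s]^*$ is the identity on $\pi_{\alpha_v}^*(B)$ on the nose (the extra factor $b$ in Lemma~\ref{lem:appendix} absorbs the constant classes), so that $\calB(Q_v)=\calB(P_v)$ for all $\calB\in B$ and orthogonality to $B$ is preserved, while the computation of \cite[Theorem 3.3]{SZ16} shows $(Q_v)$ is orthogonal to every odd-order class. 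If you insist on a single-global-twist strategy, you would have to enlarge $B$ by the descent classes first, which changes the statement being proved; as written, the lift $(Q_v)\in T^c(\A_k)$ need not exist.
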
}

This is the analog {of Theorem~\ref{thm:MainTorsor}} for Kummer varieties.
The proof is given in the appendix by Skorobogatov. 
    \subsection{Discussion}\label{sec:discussion}

    Question \ref{ques:1} trivially has a positive answer when $X(\A_k)^{\Br} \ne \emptyset$ and, in particular when $X(k) \ne \emptyset$. At the other extreme, the answer is also yes when either $X(\A_k) = \emptyset$ or $\Alb^1_X(\A_k)^{\Br} = \emptyset$ (see Corollary~\ref{cor:albob}). In particular, {the answer is yes for varieties satisfying the Hasse principle.}
	
	Theorem \ref{thm:MainTorsor} shows that the answer is yes for curves of genus $1$. When the first draft of the present paper was made available we were unaware of any example of a higher genus curve $X$ and prime $p$ for which one could show that $X$ does not satisfy $\BMprimary{p}$. Motivated by this, we undertook a deeper study of the Brauer-Manin obstruction on curves, joint with Voloch \cite{CVV}.  There we produced a genus $3$ curve over $\Q$ with a $0$-cycle of degree $1$ that is a counterexample to the Hasse principle explained by the $2$-torsion subgroup of the Brauer group, but with no odd Brauer-Manin obstruction. This {example shows} that degrees do not capture the Brauer-Manin obstrution on higher genus curves, since the $0$-cycle of degree $1$ implies that every sufficiently large integer is the degree of a very ample line bundle.
	
    One reason why degrees cannot capture the Brauer-Manin obstruction in general is that while the set $\{d\in \mathbb{N} \;:\; $X$ \textup{ satisfies } \BMprimary{d}\}$ is a birational invariant, the set of integers that arise as degrees of globally generated ample line bundles on $X$ is not. Exploiting this one can construct examples (even among geometrically rational surfaces) for which degrees do not capture the Brauer-Manin obstruction (See Lemma~\ref{lem:blowup} and Example~\ref{Ex:dp2}). At least in the case of surfaces, this discrepancy can be dealt with by considering only minimal surfaces, {i.e., surfaces which do not contain a Galois-invariant collection of pairwise disjoint $(-1)$-curves}. 
    
    Different and more serious issues are encountered in the case of nonrational surfaces of negative Kodaira dimension. In \S \ref{sec:ConicBundle} we give an example of a minimal conic bundle over an elliptic curve for which degrees do not capture the Brauer-Manin obstruction. This is unsurprising (and somewhat less disappointing) given the known pathologies of the Brauer-Manin obstruction on quadric fibrations~\cite{CPS}. In \S\ref{sec:ConicBundle} we note that degrees do capture the Brauer-Manin obstruction on minimal rational conic bundles and on Severi-Brauer {bundles} over elliptic curves with finite Tate-Shafarevich group (See Theorem~\ref{thm:ConicBundle}).

	For a minimal del Pezzo surface of degree $d$, degrees capture the Brauer-Manin obstruction as soon as $\BMprimary{d}$ holds. This is because the canonical class generates the Picard group, except when the surface is $\PP^2$, a quadric or a rational conic bundle \cite[Theorem 3.9]{Hassett}, in which cases it follows from results mentioned above. Moreover, $\BMprimary{d}$ holds trivially when the degree, $d$, is not equal to $2$ or $3$. Indeed, when $d = 1$ there must be a rational point and when $d  > 3$ the exponent of $\Br X/\Br_0 X$ divides $d$~\cite[Theorem 4.1]{Corn07}\footnote{In fact, it is well-known that del Pezzo surfaces of degree at least $5$ have trivial Brauer group and satisfy the Hasse principle (see, e.g.,~\cite[Thm. 4.1]{Corn07} and~\cite[Thm. 2.1]{VA-dPNotes}).}. Swinnerton-Dyer has shown that any cubic surface such that $\exp(\Br X/\Br_0 X)\nmid 3$ must satisfy the Hasse principle, implying that the answer is yes for $d = 3$~\cite[Cor. 1]{SD-BrauerCubic}. Whether the analogous property holds for del Pezzo surfaces of degree $2$ was considered by Corn~\cite[Question 4.5]{Corn07}, but remained open} until recent work of Nakahara. He showed that odd torsion Brauer classes on a del Pezzo surface of degree $2$ cannot obstruct the Hasse principle~\cite{Masahiro}.

	{Taken together, the results mentioned in the previous two paragraphs combine to yield the following.
	\begin{Theorem}
		Degrees capture the Brauer-Manin obstruction on geometrically rational minimal surfaces over number fields.
	\end{Theorem}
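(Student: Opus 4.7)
The plan is to reduce to a case analysis via the Enriques--Manin--Iskovskikh classification of minimal geometrically rational surfaces. Over a number field $k$, this classification states that such a surface $X$ belongs to one of four classes: (i) $X$ is isomorphic to $\PP^2$ or to a two-dimensional Severi--Brauer variety; (ii) $X$ is a smooth quadric surface in $\PP^3$; (iii) $X$ is a minimal del Pezzo surface of some degree $d\in\{1,\dots,9\}$, not of the preceding types; or (iv) $X$ is a minimal conic bundle over a smooth genus $0$ curve. I would treat each class using the results collected in the preceding discussion together with Theorem~\ref{thm:ConicBundle}.

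For (i) and (ii) the Hasse principle is known: for $\PP^2$ it is trivial, for Severi--Brauer surfaces it follows from Albert--Brauer--Hasse--Noether (locally split everywhere implies globally split, so the presence of an adelic point forces $X\cong\PP^2$), and for quadrics it is Hasse--Minkowski. Since these surfaces therefore satisfy $X(\A_k)\ne\emptyset \Rightarrow X(k)\ne\emptyset$, degrees capture the Brauer--Manin obstruction vacuously. Class (iv) is exactly the content of Theorem~\ref{thm:ConicBundle}.

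Class (iii) is where the content lies. The key tool is Hassett's Theorem 3.9, which asserts that for a minimal del Pezzo surface of degree $d$ not covered by (i) or (ii), one has $\Pic X = \Z K_X$. In particular, every $k$-rational globally generated ample line bundle on $X$ is a positive integer multiple of $-K_X$ and hence has degree a positive multiple of $d$. Consequently it suffices to verify $\BMprimary{d}$ for such $X$. This reduces to a finite case analysis on $d$: for $d=1$ a $k$-rational basepoint of $|-K_X|$ exists and there is nothing to prove; for $d\ge 5$ the quotient $\Br X/\Br_0 X$ vanishes and the Hasse principle holds; for $d=4$ the exponent of $\Br X/\Br_0 X$ divides $d$, so $\BMprimary{d}$ is automatic; for $d=3$ this is Swinnerton-Dyer's theorem on cubic surfaces; and for $d=2$ it is Nakahara's recent theorem.

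The theorem is assembled entirely from results already in the literature, so there is no genuine technical obstacle: the main task is organizational, namely identifying the correct invocation in each class. The only delicate point is the application of Hassett's theorem in class (iii), which ensures that the set of integers realized as degrees of projective embeddings is precisely $d\,\Z_{>0}$, so that $\BMprimary{d}$ is the only property that must be checked in each subcase.
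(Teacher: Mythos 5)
Your proposal is correct and follows essentially the same route as the paper: the Iskovskikh classification splits the problem into conic bundles over genus $0$ curves (handled by Theorem~\ref{thm:ConicBundle}\eqref{part:genus0}), surfaces satisfying the Hasse principle, and minimal del Pezzo surfaces with $\Pic X = \Z K_X$, where the degree-by-degree analysis ($d=1$ forces a rational point, $d\ge 4$ via Corn, $d=3$ via Swinnerton-Dyer, $d=2$ via Nakahara) matches the paper's. One inessential slip: the degrees realized in class (iii) are $\{n^2d\}$ rather than all of $d\,\Z_{>0}$, but since only the containment in $d\,\Z_{>0}$ is used (via Lemma~\ref{lem:Support}), the argument is unaffected.
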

	}
  
    Our results above give an affirmative answer {(conditional on the finiteness of certain Tate-Shafarevich groups in the case of bielliptic surfaces)} for two of the four classes of surfaces of Kodaira dimension $0$, the other two being K3 and Enriques surfaces. {Until quite recently, all known examples of Brauer-Manin obstructions to the existence of rational points on K3 or Enriques surfaces have come from the $2$-torsion subgroup of the Brauer group, implying that degrees capture since the N\'eron-Severi lattice is even {in both cases}. In addition, even among diagonal quartic surfaces over $\Q$ where there were known to be nonconstant elements of odd order, Ieronymou and Skorobogatov showed that $\BMcoprime{2}$ holds \cite{IS,IScorrigendum}. Nakahara has extended this result to some diagonal quartics over general number fields, and, using results of this paper (specifically Lemma~\ref{lem:killntors}) has strengthened this to show that $\BMprimary{2}$ holds and hence that degrees capture \cite{Masahiro}. These diagonal quartics are K3 surfaces that are geometrically Kummer, but not necessarily Kummer over their base field.

    In contrast, some time after the first draft of the present paper was made available, Corn and Nakahara \cite{CornNakahara} produced an example of a degree $2$ K3 surface with a $3$-torsion Brauer-Manin obstruction, showing that $\BMcoprime{2}$ does not hold. This may suggest it is unlikely that degrees capture the Brauer-Manin obstruction for K3 surfaces, though they have not ruled out the possibility of an even transcendental Brauer-Manin obstruction.
 }


	Taken together, our results and the discussion above indicate that while the degrees of ample line bundles may in general be too crude to determine it, the set of integers $d$ for which $\BMprimary{d}$, or the stronger variant appearing in Theorems~\ref{thm:MainTorsor} and~\ref{thm:KummerGeneral}, holds are interesting birational invariants intimately related to the geometry and arithmetic of the variety.

    \subsection{The analog for $0$-cycles of degree $1$}

	{One further motivation for studying the question of whether degrees capture the Brauer-Manin obstruction to rational points is that the analogue for $0$-cycles of degree $1$ is expected to hold.} Conjecture $(E)$ on the sufficiency of the Brauer-Manin obstruction for $0$-cycles implies that the $\deg(\calL)$-primary subgroup \emph{does} capture the Brauer-Manin obstruction to $0$-cycles of degree $1$ for any ample globally generated line bundle $\calL$.  See~\S\ref{sec:0cycles} for more details.

\section*{Acknowledgements}
    This article was sparked by a discussion at the \textit{Rational Points 2015}; we thank Michael Stoll for organizing a stimulating workshop.  We also thank Jean-Louis Colliot-Th\'el\`ene for comments regarding the origins of Conjecture (E), Evis Ieronymou for pointing out a small error in Section~\ref{sec:0cycles} of an earlier draft, S\'andor Kov\'acs for discussions regarding the proof of Lemma~\ref{lem:BiellipticDivisors}, and Alexei Skorobogatov for the appendix and for helpful comments. We also thank the referee for several {helpful} comments.

\section{{Preliminaries}}\label{sec:Prelim}

    \subsection{Notation}

        {For an integer $n$, we define $\Supp(n)$ to be the set of prime numbers dividing $n$. For an abelian group $G$ and integer $d$ we use $G[d]$ to denote the subgroup of elements of order dividing $d$, $G[d^\infty]$ for the subgroup of elements of order dividing a power of $d$ and $G[d^\perp]$ for the subgroup of elements order prime to $d$. }	

        Throughout $k$ denotes a number field, $\Omega_k$ denotes the set of places of $k$, and $\A_k$ denotes the adele ring of $k$.  For any $v\in \Omega_k$, $k_v$ denotes the completion of $k$ at $v$.  We use $K$ to denote an arbitrary field of characteristic $0$. We fix an algebraic closure $\Kbar$ and write $\Gamma_K$ for the absolute Galois group of $K$, with similar notation for $k$ in place of $K$. For {a commutative} algebraic group $G$ {over $k$}, we use $\Sha(k, G)$ to denote the Tate-Shafarevich group of $G$, i.e., the group of torsors under $G$ that are everywhere locally trivial.
        
        Let $X$ be a variety over $K$.  We say that $X$ is \defi{nice} if it is smooth, projective, and geometrically integral. We write $\Xbar$ for the basechange of $X$ to $\Kbar$.  If $X$ is defined over a number field $k$ and $v\in\Omega_k$, we write $X_v$ for the basechange of $X$ to $k_v$.
        
       When $X$ is integral, we use $\kk(X)$ to denote the function field of $X$. The Picard group of $X$, denoted $\Pic X$, is the group of isomorphism classes of $K$-rational line bundles on $X$. {In the case that $X$ is smooth, given a Weil divisor $D \in \Div X$ we denote the corresponding line bundle by $\OO_X(D)$}. The subgroup $\Pic^0 X\subset \Pic X$ consists of those elements that map to the identity component of the Picard scheme.  The N\'eron-Severi group of $X$, denoted $\NS X$, is the quotient $\Pic X/\Pic^0 X$. The Brauer group of $X$, denoted $\Br X$, is $\HH^2_{\et}(X, \G_m)$ and the algebraic Brauer group of $X$ is $\Br_1 X := \ker (\Br X \to \Br \Xbar)$. {The structure morphism yields a map $\Br K \to \Br X$, whose image is the subgroup of constant algebras denoted $\Br_0 X$.}

        \subsection{Polarized varieties, degrees and periods}
        In this paper a \defi{nice polarized variety} over $K$ is a pair $(X,\calL)$ consisting of a nice $K$-variety $X$ and a globally generated ample line bundle $\calL \in \Pic X$. We define the \defi{degree} of a nice polarized variety, denoted by $\deg(X,\calL)$ or $\deg(\calL)$, to be $\dim(X)!$ times the leading coefficient of the Hilbert polynomial, $h(n) := \chi(\calL^{\otimes n})$.
            
        \begin{Lemma}\label{lem:perioddividesdegree}
            Suppose $(X,\calL)$ is a nice polarized variety of degree $d$ over $K$. Then there is a $K$-rational $0$-cycle of degree $d$ on $X$.
        \end{Lemma}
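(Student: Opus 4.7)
\medskip

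\noindent \textbf{Proof plan.} The plan is to produce the $0$-cycle as the cycle associated to a $0$-dimensional closed subscheme of $X$ cut out by $\dim(X)$ sections of $\calL$ chosen in general position over $K$. Three ingredients are needed: (i) identifying the degree $d$ with the top self-intersection $(\calL^{\dim X})$; (ii) finding $K$-rational sections whose vanishing is of the expected codimension; and (iii) computing the length of the resulting zero-dimensional scheme.

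First, writing $n = \dim X$, I would recall that the Hilbert polynomial satisfies $\chi(\calL^{\otimes m}) = \tfrac{(\calL^{n})}{n!}\, m^n + (\text{lower order in } m)$ by asymptotic Riemann--Roch, so the definition of degree in the paper gives $d = (\calL^{n})$.

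Next, since $\Char K = 0$ the field $K$ is infinite, and since $\calL$ is globally generated the complete linear system $|\calL|$ is basepoint free. I would argue inductively that one can select $K$-rational divisors $D_1,\dots,D_n \in |\calL|$ whose scheme-theoretic intersection is $0$-dimensional: at the $i$-th step, the locus of $D \in |\calL|(K)$ for which $D \cap (D_1 \cap \cdots \cap D_{i-1})$ fails to have codimension $i$ is a proper Zariski-closed subset of the projective space $|\calL|$ (avoiding the finitely many top-dimensional components of $D_1 \cap \cdots \cap D_{i-1}$, no one of which is contained in the base locus), and since $K$ is infinite its $K$-points form a proper subset of $|\calL|(K)$. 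An alternative formulation, which avoids invoking Bertini directly, is to pass to the morphism $\phi_\calL : X \to \PP^N_K$ determined by $|\calL|$ and intersect $\phi_\calL(X)$ with a generic $K$-rational linear subspace of complementary dimension.

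Finally, for the resulting $0$-dimensional subscheme $Z := D_1 \cap \cdots \cap D_n \subset X$, standard intersection theory identifies the length $\dim_K \Gamma(Z,\OO_Z)$ with the intersection number $(D_1 \cdots D_n) = (\calL^{n}) = d$. Recording $Z$ as a formal sum of its closed points weighted by the local lengths $\mathrm{length}_{\OO_{Z,P}} \OO_{Z,P}$ yields a $K$-rational $0$-cycle whose degree $\sum_P \mathrm{length}(\OO_{Z,P}) \cdot [\kappa(P):K]$ equals $\mathrm{length}_K(Z) = d$, as required.

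The main (and essentially only) obstacle is the general position argument producing $K$-rational $D_i$; this is routine because $K$ is infinite, but it is the one place where one cannot simply quote a formal identity. The identification $d = (\calL^{n})$ and the length computation are standard.
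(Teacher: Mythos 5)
Your proof is correct and follows essentially the same route as the paper: the paper's own argument is precisely your ``alternative formulation'' --- it observes that $\phi_\calL$ is a finite morphism (citing Lazarsfeld) and intersects $\phi_\calL(X)$ with a general linear subvariety of codimension $\dim(X)$, pulling the resulting $0$-cycle back to $X$. Your primary version --- intersecting $\dim X$ general $K$-rational members of $|\calL|$ upstairs and identifying the length of the resulting $0$-dimensional scheme with $(\calL^{\dim X})=d$ --- is a correct, somewhat more detailed variant of the same idea.
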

        
        \begin{proof}
            Since $\calL$ is generated by global sections it determines a morphism $\phi_\calL:X \to \PP^N$, for some $N$. Since $\calL$ is ample and globally generated, $\phi_\calL$ is a finite morphism \cite[Corollary 1.2.15, page 28]{Lazarsfeld}. The intersection of $\phi_\calL(X) \subset \PP^N$ with a general linear subvariety of codimension equal to $\dim(X)$ is a $0$-cycle $a$ on $\phi_\calL(X)$ which pulls back to a $0$-cycle of degree $d$ on $X$.
        \end{proof}
            
        For a nice variety $X$ over $K$ and $i\in \Z$ we write $\Alb^i_X$ for degree $i$ component of the Albanese scheme parameterizing $0$-cycles on $X$ up to Albanese equivalence. Then $\Alb^0_X$ is an abelian variety and $\Alb^i_X$ is a $K$-torsor under $\Alb^0_X$. The (Albanese) \defi{period} of $X$, {denoted $\per(X)$,} is the order of $\Alb^1_X$ in the Weil-Ch\^atelet group $\HH^1(K,\Alb^0_X)$. Equivalently, the period is the smallest positive integer $P$ such that $\Alb^P_X$ has a $K$-rational point. Any $k$-rational $0$-cycle of degree $d$ determines a $k$-rational point on $\Alb^d_X$. Thus Lemma \ref{lem:perioddividesdegree} has the following corollary.
        
        \begin{Corollary}\label{cor:perioddividesdegree}
            If $(X,\calL)$ is a nice polarized variety of degree $d$ over $K$, then the period of $X$ divides $d$.
        \end{Corollary}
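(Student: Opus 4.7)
The plan is to chain together Lemma \ref{lem:perioddividesdegree} with the characterization of the period given just above the corollary. Since $(X,\calL)$ is nice and polarized of degree $d$, Lemma \ref{lem:perioddividesdegree} produces a $K$-rational $0$-cycle $a$ on $X$ with $\deg(a) = d$. The key observation, already flagged in the paragraph defining the period, is that the Albanese scheme construction is compatible with degrees: a $K$-rational $0$-cycle of degree $d$ maps to a $K$-rational point of the component $\Alb^d_X$ (this is essentially the universal property of the Albanese scheme as the target of the Albanese map from $0$-cycles modulo Albanese equivalence, which respects the degree grading).

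Once we have $\Alb^d_X(K) \neq \emptyset$, we invoke the equivalent characterization of the period: $\per(X)$ is the smallest positive integer $P$ for which $\Alb^P_X(K) \neq \emptyset$. Since $\Alb^0_X$ is an abelian variety (so in particular $\Alb^0_X(K) \ni 0$) and translation by $\Alb^0_X$ makes $\Alb^{i+j}_X \cong \Alb^i_X \times^{\Alb^0_X} \Alb^j_X$, the set of integers $i$ for which $\Alb^i_X$ has a $K$-point forms a subgroup of $\Z$, namely $\per(X)\,\Z$. Therefore $\Alb^d_X(K)\neq\emptyset$ forces $\per(X) \mid d$, which is the claim.

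The argument is essentially a one-liner given Lemma \ref{lem:perioddividesdegree} and the definitions, so there is no serious obstacle; the only point to be careful about is the compatibility between the degree of a $0$-cycle and the component of the Albanese scheme it lands in, but this is built into the definition of the Albanese scheme as a degree-graded object. I would present the proof in two sentences: invoke Lemma \ref{lem:perioddividesdegree}, then cite the characterization of $\per(X)$ as the smallest $P$ with $\Alb^P_X(K)\neq\emptyset$.
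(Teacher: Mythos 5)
Your proof is correct and matches the paper's: the paper likewise deduces the corollary by noting that the degree-$d$ $0$-cycle from Lemma~\ref{lem:perioddividesdegree} gives a $K$-point on $\Alb^d_X$, and then uses that the period is the order of $\Alb^1_X$ in $\HH^1(K,\Alb^0_X)$ (equivalently, your observation that the set of $i$ with $\Alb^i_X(K)\neq\emptyset$ is the subgroup $\per(X)\Z$). Your extra care about divisibility versus mere minimality is a sensible touch but the argument is the same.
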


\subsection{Basic properties of $\BMd{d}$ and $\BMcoprime{d}$}
    
	The definitions of $\BMprimary{d}$ and $\BMcoprime{d}$ yield the following lemma, which we will use freely throughout the paper.
    \begin{Lemma}\label{lem:Support}
    	Let $X$ be a nice variety over a number field $k$, let $d$ and $e$ be positive integers such that $d \mid e$, and set $d_0 := \prod_{p \in \Supp(d)} p$. Then
        \begin{enumerate}
            \item $X$ satisfies $\BMprimary{d}$ (resp. $\BMcoprime{d}$) if and only if $X$ satisfies $\BMprimary{d_0}$ (resp. $\BMcoprime{d_0}$), and 
            \item If $X$ satisfies $\BMprimary{d}$ (resp. $\BMcoprime{d}$), then $X$ satisfies $\BMprimary{e}$ (resp. $\BMcoprime{e}$).
        \end{enumerate}
        In particular, if $d'$ is a positive integer with $\Supp(d) \subset \Supp(d')$ and $X$ satisfies $\BMprimary{d}$ (resp. $\BMcoprime{d}$), then $X$ satisfies $\BMprimary{d'}$ (resp. $\BMcoprime{d'}$).
    \end{Lemma}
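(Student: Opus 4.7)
The plan is to reduce everything to a single elementary observation about the subgroups $(\Br X)[d^\infty]$ and $(\Br X)[d^\perp]$: both depend on $d$ only through its set of prime divisors $\Supp(d)$. Once this is in hand, parts (1) and (2) follow by straightforward set-theoretic manipulations of the defining intersections, and the final claim is a combination of the two.

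More concretely, I would first note the tautological identifications
\[
    (\Br X)[d^\infty] = \{\alpha \in \Br X : \Supp(\ord(\alpha)) \subseteq \Supp(d)\}, \qquad (\Br X)[d^\perp] = \{\alpha \in \Br X : \Supp(\ord(\alpha)) \cap \Supp(d) = \emptyset\}.
\]
In particular, $\Supp(d) = \Supp(d_0)$ gives $(\Br X)[d^\infty] = (\Br X)[d_0^\infty]$ and $(\Br X)[d^\perp] = (\Br X)[d_0^\perp]$, so the subsets $X(\A_k)^{\Br_d}$, $X(\A_k)^{\Br_{d^\perp}}$, $X(\A_k)^{\Br_{d_0}}$, and $X(\A_k)^{\Br_{d_0^\perp}}$ are literally equal. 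This gives part (1) immediately from the definitions of $\BMprimary{\cdot}$ and $\BMcoprime{\cdot}$.

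For part (2), since $d \mid e$ we have $\Supp(d) \subseteq \Supp(e)$, whence $(\Br X)[d^\infty] \subseteq (\Br X)[e^\infty]$ and, conversely, $(\Br X)[e^\perp] \subseteq (\Br X)[d^\perp]$. Taking intersections reverses inclusions for the first and preserves them for the second, giving
\[
    X(\A_k)^{\Br_e} \subseteq X(\A_k)^{\Br_d} \qquad \text{and} \qquad X(\A_k)^{\Br_{d^\perp}} \subseteq X(\A_k)^{\Br_{e^\perp}}.
\]
From these inclusions one reads off that $\BMprimary{d}$ implies $\BMprimary{e}$ (if $X(\A_k)^{\Br} = \emptyset$ then $X(\A_k)^{\Br_d} = \emptyset$, hence $X(\A_k)^{\Br_e} = \emptyset$) and that $\BMcoprime{d}$ implies $\BMcoprime{e}$ (if $X(\A_k) \neq \emptyset$ then $X(\A_k)^{\Br_{d^\perp}} \neq \emptyset$, hence $X(\A_k)^{\Br_{e^\perp}} \neq \emptyset$).

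For the final assertion, $\Supp(d) \subseteq \Supp(d')$ forces $d_0 \mid d'$ (since $d_0$ is squarefree with prime divisors among those of $d'$). By (1), $X$ satisfies $\BMprimary{d}$ iff it satisfies $\BMprimary{d_0}$, and then (2) applied to the divisibility $d_0 \mid d'$ yields $\BMprimary{d'}$; the $\BMcoprime{\cdot}$ case is identical. There is no real obstacle here — the entire lemma is formal bookkeeping — so the only thing to be careful about is getting the direction of each inclusion right when passing between subgroups of $\Br X$ and their orthogonal complements in $X(\A_k)$.
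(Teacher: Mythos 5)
Your proof is correct and matches the paper's treatment: the paper gives no written proof, simply asserting that the lemma follows from the definitions of $\BMprimary{d}$ and $\BMcoprime{d}$, and your argument is precisely the formal bookkeeping being alluded to (the subgroups $(\Br X)[d^\infty]$ and $(\Br X)[d^\perp]$ depend on $d$ only through $\Supp(d)$, and the inclusions go the right way). All the directions of the inclusions and implications check out.
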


	\begin{Lemma}\label{lem:pushpull}	
		Let $\pi\colon Y \to X$ be a morphism of nice varieties over a number field $k$ and let $d$ be a positive integer.
		\begin{enumerate}
		\item\label{case:push} If $Y(\A_k) \ne \emptyset$ and $Y$ satisfies $\BMdd{d}$, then $X$ satisfies $\BMdd{d}$.
		\item\label{case:pull} If $X(\A_k)^{\Br} = \emptyset$ and $X$ satisfies $\BMd{d}$, then $Y$ satisfies $\BMd{d}$.
		\end{enumerate}
	\end{Lemma}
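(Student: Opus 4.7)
The plan is to derive both parts from functoriality of the Brauer--Manin pairing under the pullback $\pi^* \colon \Br X \to \Br Y$. Two elementary facts drive the argument. First, since $\pi^*$ is a group homomorphism, the order of $\pi^*\alpha$ divides the order of $\alpha$, so $\pi^*$ carries $(\Br X)[d^\infty]$ into $(\Br Y)[d^\infty]$ and $(\Br X)[d^\perp]$ into $(\Br Y)[d^\perp]$. Second, for any adelic point $Q \in Y(\A_k)$ and any $\alpha \in \Br X$, evaluation commutes with pullback place-by-place, giving $\langle Q,\pi^*\alpha\rangle = \langle \pi(Q),\alpha\rangle$. Consequently, if $S \subseteq \Br X$ is any subset and $S' := \pi^*(S) \subseteq \Br Y$, then $\pi\bigl(Y(\A_k)^{S'}\bigr) \subseteq X(\A_k)^{S}$.

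For part (\ref{case:push}), I would begin by noting that $X(\A_k) \neq \emptyset$ since $Y(\A_k) \neq \emptyset$ and $\pi$ is defined on all of $Y$; so the hypothesis of $\BMdd{d}$ for $X$ is applicable. Applying $\BMdd{d}$ to $Y$ produces a point $Q \in Y(\A_k)^{(\Br Y)[d^\perp]}$. Specializing the functoriality inclusion to $S = (\Br X)[d^\perp]$ (whose pullback lies in $(\Br Y)[d^\perp]$) shows $\pi(Q) \in X(\A_k)^{(\Br X)[d^\perp]}$, so this set is nonempty and $X$ satisfies $\BMdd{d}$.

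For part (\ref{case:pull}), I would show directly that $Y(\A_k)^{(\Br Y)[d^\infty]} = \emptyset$, which trivially implies $\BMd{d}$ for $Y$. Suppose for contradiction that $Q \in Y(\A_k)^{(\Br Y)[d^\infty]}$. Applying functoriality with $S = (\Br X)[d^\infty]$ gives $\pi(Q) \in X(\A_k)^{(\Br X)[d^\infty]}$. But the assumption $X(\A_k)^{\Br} = \emptyset$ combined with $\BMd{d}$ for $X$ forces $X(\A_k)^{(\Br X)[d^\infty]} = \emptyset$, a contradiction.

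There is no substantive obstacle here; the statement is essentially a formal consequence of functoriality of the Brauer--Manin pairing, and the only thing worth pointing out is the (trivial) preservation of the torsion/coprime-torsion subgroups under pullback. The content of the lemma is rather in how it will be combined elsewhere, e.g.\ when $\pi$ is a Kummer or Albanese covering.
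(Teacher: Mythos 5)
Your proof is correct and follows essentially the same route as the paper's: both parts reduce to functoriality of the Brauer--Manin pairing together with the observation that $\pi^*$ preserves the $d$-primary and prime-to-$d$ torsion subgroups. The only cosmetic difference is that you argue part (\ref{case:push}) directly and part (\ref{case:pull}) by contradiction, whereas the paper phrases both as showing every $y \in Y(\A_k)$ fails orthogonality to some pulled-back class.
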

	
	\begin{proof}
	    \begin{enumerate}
			\item Suppose that $X(\A_k)\ne\emptyset$, but $X(\A_k)^{\Br_{d^{\perp}}}= \emptyset$. Then for any $y \in Y(\A_k)$ there exists $\calA \in (\Br X)[d^{\infty}]$ such that $0 \ne (\pi(y),\calA) = (y,\pi^*\calA)$.  Hence $Y(\A_k)^{\Br_{d^\perp}}= \emptyset$ and it follows that $Y$ is not $\BMdd{d}$.
			\item Suppose that $X$ is $\BMd{d}$ and $X(\A_k)^{\Br} = \emptyset$. Then given $y \in Y(\A_k)$, we may find an $\calA \in (\Br X)[d^{\infty}]$ such that $0 \ne (\pi(y),\calA) = (y,\pi^*\calA)$, which shows that $y \notin Y(\A_k)^{\Br_d}$.
		\end{enumerate}	
	\end{proof}
  
	\begin{Lemma}\label{lem:birationalinvariance}
		Let $\sigma\colon Y \dasharrow X$ be a birational map of nice varieties over a number field $k$ and let $d$ be a positive integer. Then
		\begin{enumerate}
			\item\label{it:1} $X$ satisfies $\BMd{d}$ if and only if $Y$ satisfies $\BMd{d}$;
			\item\label{it:2} $X$ satisfies $\BMdd{d}$ if and only if $Y$ satisfies $\BMdd{d}$;
			\item\label{it:3} The map $\sigma$ induces an isomorphism $\sigma^*\colon \Br X \stackrel{\sim}{\to} \Br Y$ such that for any $B \subset \Br X$, $Y(\A_k)^{\sigma^*(B)} = \emptyset$ if and only if $X(\A_k)^{B} = \emptyset$.
		\end{enumerate}
	\end{Lemma}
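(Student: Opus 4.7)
The plan is to prove (3) first, from which (1) and (2) follow by taking $B$ equal to $\Br X$, $(\Br X)[d^\infty]$, or $(\Br X)[d^\perp]$ in turn and noting that the isomorphism $\sigma^*$ from (3) carries each of these subgroups to its counterpart in $\Br Y$.

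For (3), my strategy is to reduce to the case of a single blowup along a smooth center via the weak factorization theorem of Abramovich--Karu--Matsuki--W\l{}odarczyk, which expresses any birational map between nice varieties over a field of characteristic zero as a composition of blowups and their inverses along smooth subvarieties. It then suffices to treat an individual blowup morphism $\pi\colon \tilde W \to W$ of a smooth subvariety $C \subset W$ and to show both that $\pi^*\colon \Br W \to \Br \tilde W$ is an isomorphism and that $W(\A_k)^B = \emptyset$ if and only if $\tilde W(\A_k)^{\pi^*B} = \emptyset$ for every $B \subset \Br W$.

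The Brauer group isomorphism is a standard consequence of the birational invariance of $\Br$ for smooth projective varieties in characteristic zero (ultimately from Grothendieck's purity, and also verifiable by direct computation using the projective bundle structure of the exceptional divisor). For the adelic statement, the key point is that $\pi\colon \tilde W(k_v) \to W(k_v)$ is surjective at every place $v$: the fiber $\pi^{-1}(w)$ is either a single $k_v$-point (when $w \notin C$) or a projective space $\PP^{\codim(C) - 1}_{k_v}$ (when $w \in C(k_v)$), and both carry $k_v$-points. Combined with the functoriality identity $\calA(\pi(\tilde w_v)) = (\pi^*\calA)(\tilde w_v)$, this yields the equivalence in both directions: any adelic point of $W$ orthogonal to $B$ lifts pointwise to one on $\tilde W$ orthogonal to $\pi^*B$, and conversely any adelic point on $\tilde W$ pushes forward to one on $W$ with identical local invariants. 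Iterating this step over the factorization of $\sigma$ gives (3). The principal conceptual input is weak factorization; once invoked, the remaining verifications are essentially formal, and (1), (2) drop out as corollaries of (3) by specializing $B$ as above.
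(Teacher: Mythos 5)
Your proposal is correct and follows essentially the same route as the paper: weak factorization to reduce to a single blowup along a smooth center, Grothendieck's isomorphism of Brauer groups, surjectivity on local points via the projective-bundle structure of the exceptional divisor, and functoriality of the Brauer--Manin pairing. The only cosmetic difference is that the paper invokes the Lang--Nishimura theorem to transfer the hypothesis $X(\A_k)\neq\emptyset$ appearing in part (2), whereas in your setup this is just the special case $B=0$ of part (3); both are fine.
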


	\begin{proof}
		By the Lang-Nishimura Theorem~\cites{Nishimura1955, Lang}, $X(\A_k) = \emptyset \Leftrightarrow Y(\A_k) = \emptyset$. With this in mind, the first two statements follow easily from the third. Any birational map between smooth projective varieties over a field of characteristic $0$ can be factored into a sequence of blowups and blowdowns with smooth centers~\cite{WeakFactorizationTheorem}. Hence, it suffices to prove~\eqref{it:3} under the assumption that $\sigma \colon Y \to X$ is a birational morphism obtained by blowing up a smooth center $Z \subset X$. Then $\sigma^*\colon\Br X \to \Br Y$ is an isomorphism~\cite[Corollaire 7.3]{Grothendieck}.
        
		For any field $L/k$ it is clear that $\sigma(Y(L))$ contains $(X\setminus Z)(L)$. Furthermore, since $Z$ is smooth, the exceptional divisor $E_Z$ is a projective bundle over $Z$, so $\sigma(E_Z(L)) = Z(L)$. Therefore $\sigma(Y(L)) = X(L)$. It follows that the map $\sigma\colon Y(\A_k) \to X(\A_k)$ is surjective, and so~\eqref{it:3} follows from functoriality of the Brauer-Manin pairing.
	\end{proof}

		\begin{Lemma}\label{lem:blowup}
			{Let $d$ be a positive integer and let $X$ be} a nice $k$-variety of dimension at least $2$ with the following properties.
			\begin{enumerate}
				\item $X(\A_k) \ne \emptyset$;
				\item $X(\A_k)^{\Br_d}= \emptyset$;
				\item $\Br X/\Br_0 X$ has exponent $d$;
				\item degrees capture the Brauer-Manin obstruction on $X$; and
				\item $X$ has a closed point $P$ with $\deg(P)$ relatively prime to $d$.
			\end{enumerate}
			Let $Y := \textup{Bl}_P X$. 
            Then degrees do not capture the Brauer-Manin obstruction on $Y$.
		\end{Lemma}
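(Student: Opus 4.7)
The plan is to exhibit a globally generated ample line bundle on $Y$ whose degree $e$ has no prime factors in common with $d$, and then observe that no Brauer class of $e$-primary order can obstruct rational points on $Y$.

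First, by Lemma~\ref{lem:birationalinvariance} the blowup morphism $\pi \colon Y \to X$ induces an isomorphism $\pi^*\colon \Br X \stackrel{\sim}{\to} \Br Y$ that identifies $\Br_0 X$ with $\Br_0 Y$. Hence $\Br Y/\Br_0 Y$ has exponent $d$, and the same lemma gives $Y(\A_k)^{\Br} = \emptyset$ (since $X(\A_k)^{\Br} \subset X(\A_k)^{\Br_d} = \emptyset$) while $Y(\A_k) \neq \emptyset$. For any integer $e$ with $\Supp(e)\cap \Supp(d) = \emptyset$, every $\alpha \in (\Br Y)[e^\infty]$ has image in $\Br Y/\Br_0 Y$ whose order is simultaneously a power of the primes dividing $e$ and a divisor of $d$, hence trivial; thus $(\Br Y)[e^\infty] \subset \Br_0 Y$. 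Global reciprocity then gives $Y(\A_k)^{(\Br Y)[e^\infty]} = Y(\A_k) \neq \emptyset$. So once we realize such an $e$ as the degree of a globally generated ample line bundle on $Y$, we will have shown that $Y$ fails $\BMd{e}$, and hence that degrees do not capture the Brauer-Manin obstruction on $Y$.

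Constructing such a line bundle is the heart of the argument. Fix a very ample line bundle $\calM$ on $X$ of degree $d_\calM$, and set $r := \deg(P)$, which is coprime to $d$ by hypothesis. Standard results on blowups along smooth centres imply that for $a \gg 0$ the line bundle $\calL_a := \pi^*\calM^a \otimes \OO_Y(-E)$ is very ample on $Y$, hence globally generated and ample. A projection formula computation, using $\pi_*(E^j) = 0$ for $0 < j < n := \dim X$ (since $E \to P$ has zero-dimensional image) together with $\deg(E^n) = (-1)^{n-1} r$, yields $\deg(\calL_a) = a^n d_\calM - r$.

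The main obstacle, albeit elementary, is to choose $a$ arbitrarily large so that $e := a^n d_\calM - r$ is coprime to every prime $p$ dividing $d$. If $p \mid d_\calM$ this is automatic since $p \nmid r$; otherwise one requires $a^n d_\calM \not\equiv r \pmod p$, a condition that excludes at most $\gcd(n, p-1) < p$ residue classes of $a$ modulo $p$. The Chinese Remainder Theorem then produces arbitrarily large $a$ satisfying all such congruences simultaneously, completing the construction. The only other nontrivial input is the very ampleness of $\calL_a$ for $a \gg 0$; everything else reduces to the structural statements already recorded in Lemma~\ref{lem:birationalinvariance} and to global reciprocity.
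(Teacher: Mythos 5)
Your proposal is correct and follows essentially the same route as the paper: use birational invariance (Lemma~\ref{lem:birationalinvariance}) to transfer the failure of $\BMd{e}$ for every $e$ prime to $d$ from $X$ to $Y$, then exhibit a globally generated ample line bundle of the form $\pi^*\calM^{\otimes a}\otimes\OO_Y(-E)$ whose degree is prime to $d$. Your explicit computation $\deg(\calL_a)=a^n d_\calM - r$ and the congruence/CRT argument for choosing $a$ simply supply the details the paper compresses into ``for some integer $n$, $\calL^{\otimes n}\otimes\calE^{-1}$ is ample and has degree prime to $d$.''
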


		\begin{proof}
			Clearly $X$ does not satisfy $\BMprimary{d'}$ for any $d'$ prime to $d$. By birational invariance, the same is true of $Y$. It thus suffices to exhibit a globally generated ample line bundle on $Y$ of degree prime to $d$. Let $\calL \in \Pic(Y)$ be the pullback of an ample line bundle on $X$ and let $\calE$ denote the line bundle corresponding to the exceptional divisor. For some integer $n$, $\calL^{\otimes n}\otimes \calE^{-1}$ is ample and has degree prime to $d$. An appropriate multiple of this is very ample, hence globally generated, and has degree prime to $d$.
		\end{proof}

		\begin{Example}\label{Ex:dp2}
			An example {of a variety }satisfying the conditions of the lemma with $d = 2$ and {$\deg(P) = 3$} is the del Pezzo surface of degree $2$ given by the equation $w^2 = 34(x^4+y^4+z^4)$ \cite[Remark 4.3]{Corn07}. The blowup of $X$ at a suitable degree $3$ point gives a rational surface for which degrees do not capture the Brauer-Manin obstruction. In particular, the property ``degrees capture the Brauer-Manin obstruction" is not a birational invariant of smooth projective varieties. 
		\end{Example}

\section{The analog for $0$-cycles}\label{sec:0cycles}

	Let $X$ be a nice variety over a number field $k$. The group of $0$-cycles on $X$ is the free abelian group on the closed points of $X$. Two $0$-cycles are directly rationally equivalent if their difference is the divisor of a function $f \in \kk(C)^\times$,	for some nice curve $C \subset X$.   The Chow group of $0$-cycles is denoted $\CH_0X$; it is the group of $0$-cycles modulo rational equivalence, which is the equivalence relation generated by direct rational equivalence. For a place $v\in \Omega$ one defines the modified Chow group
	\[
		\CH_0'X_v = \begin{cases}
            \CH_0X_v & \text{if $v$ is finite;}\\
            \coker\left(N_{\kbar_v/k_v}:\CH_0\Xbar_v \to \CH_0X_v\right) & \text{if $v$ is infinite.}
        \end{cases}
	\]
	Since $X$ is proper there is a well defined degree map $\CH_0X \to \Z$. We denote by $\CH_0^{(i)}X$ the preimage of $i \in \Z$. For an infinite place $v$, the degree of an element in $\CH'_0X_v$ is well defined modulo $2$. 
	
	There is a Brauer-Manin pairing $\prod_v \CH_0X_v \times \Br X \to \Q/\Z$ which, by global reciprocity, induces a complex,
	\[
		\CH_0X \to  \prod_v \CH'_0X_v \to \Hom(\Br X,\Q/\Z)\,.
	\]
	In particular, if there are no classes of degree $1$ in the kernel of the map on the right, then there is no global $0$-cycle of degree $1$.  In this case, we say that there is a Brauer-Manin obstruction to the existence of $0$-cycles of degree $1$.
    
    Conjecture (E) states that the sequence 
	\begin{equation}\label{conjectureE}
		\varprojlim_n (\CH_0X)/n \to  \varprojlim_n \prod_v (\CH_0'X_v)/n \to \Hom(\Br X,\Q/\Z)
	\end{equation}
	is exact. This conjecture has its origins in work of Colliot-Th\'el\`ene and Sansuc~\cite{CTS81}, Kato and Saito~\cite{KS86} and Colliot-Th\'el\`ene~\cite{CT95,CT97}. It has been stated in this form by van Hamel~\cite{vanHamel} and Wittenberg~\cite{Wittenberg-Duke}.

	Conjecture (E) implies that there is a global $0$-cycle of degree $1$ if and only if there is no Brauer-Manin obstruction to such (see, e.g.,~\cite[Rem. 1.1(iii)]{Wittenberg-Duke}). For a nice polarized variety $(X,\calL)$, Conjecture (E) also implies that this obstruction is captured by the $\deg(\calL)$-primary part of the Brauer group. We note that this also follows immediately from \cite[Conjecture 2.2, with $i = \dim X$ and and $l$ a divisor of $\deg(\calL)$]{CT97}.
	
	\begin{Proposition}\label{prop:0cycles}
		Let $(X,\calL)$ be a nice polarized variety of degree $d$ over $k$ and assume that Conjecture (E) holds for $X$. Then there exists a $k$-rational $0$-cycle of degree $1$ on $X$ if and only if there exists $(z_v) \in \prod_v\CH_0^{(1)}X_v$ that is orthogonal to $(\Br X)[d^{\infty}]$.
	\end{Proposition}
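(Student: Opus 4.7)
The forward direction is immediate: a global $0$-cycle $z$ of degree $1$ gives an adelic family $(z)_v \in \prod_v \CH_0^{(1)} X_v$ that is orthogonal to all of $\Br X$ by global reciprocity, and in particular to $(\Br X)[d^\infty]$.

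For the reverse direction, the plan is to use the given $(z_v)$ together with the global $0$-cycle $a$ of degree $d$ guaranteed by Lemma~\ref{lem:perioddividesdegree} to construct an element $\bar{z}' \in \varprojlim_n \prod_v (\CH_0' X_v)/n$ of degree $1$ at each place that is orthogonal to all of $\Br X$, and then to invoke Conjecture (E). The construction rests on a primary decomposition: by the Chinese Remainder Theorem, $\varprojlim_n G/n \cong \prod_p \varprojlim_k G/p^k$ for any abelian group $G$, so $\varprojlim_n \prod_v (\CH_0' X_v)/n$ splits as a product over primes $p$ of $p$-primary pieces, and since $\Br X = \bigoplus_p \Br X[p^\infty]$ the Brauer-Manin pairing respects this splitting. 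I will define $\bar{z}'$ componentwise: for primes $p \in \Supp(d)$, take the $p$-component of the image of $(z_v)$, which already pairs trivially with $\Br X[p^\infty] \subset (\Br X)[d^\infty]$ by hypothesis; for primes $p \notin \Supp(d)$, $d$ is a unit in $\Z_p$, so take $d^{-1}$ times the $p$-component of the image of $a$, which has degree $d^{-1} \cdot d = 1$ at each place and pairs trivially with $\Br X[p^\infty]$ by global reciprocity applied to $a$.

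Once Conjecture (E) supplies a compatible lift $(Z_n) \in \varprojlim_n (\CH_0 X)/n$ of $\bar{z}'$, the conclusion follows from an index computation: comparing degrees at any finite place shows $\deg(Z_n) \equiv 1 \pmod n$ for every $n$. Let $I$ denote the index of $X$, i.e., the positive generator of the image of $\deg\colon \CH_0 X \to \Z$. The induced map $\CH_0 X \to \Z/I$ is zero, but $\deg(Z_I) \equiv 1 \pmod I$, forcing $I = 1$ and yielding a $k$-rational $0$-cycle of degree $1$. The main technical point is to verify the compatibility of the primary decomposition with the Brauer-Manin pairing and with the degree maps at all places---especially the modification defining $\CH_0'$ at archimedean places, where degrees are only well-defined modulo $2$---and to confirm that the componentwise recipe produces a bona fide element of the inverse limit rather than merely an element of the naive product of $p$-primary pieces.
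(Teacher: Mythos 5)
Your proposal is correct, but it is organized rather differently from the paper's proof. The paper isolates the $d$-adic part of Conjecture (E) --- the exact sequence $\varprojlim_m (\CH_0X)/d^m \to \varprojlim_m \prod_v (\CH_0'X_v)/d^m \to \Hom((\Br X)[d^\infty],\Q_d/\Z_d)$ with $\Q_d = \prod_{p \mid d}\Q_p$ --- and lifts $(z_v)$ directly to global classes $z_m$ with $z_m \equiv z_v \pmod{d^m \CH_0'X_v}$; reading off degrees at a finite place shows $\deg(z_m)$ is prime to $d$, and the proof closes with a one-line gcd argument in $\Z$ against the degree-$d$ cycle of Lemma~\ref{lem:perioddividesdegree}. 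You instead perform that interpolation adelically \emph{before} invoking the conjecture: you splice the $d$-primary components of $(z_v)$ with the prime-to-$d$ components of the degree-$d$ cycle rescaled by $d^{-1} \in \Z_p^\times$, obtaining a class of degree $1$ in $\varprojlim_n\prod_v(\CH_0'X_v)/n$ orthogonal to all of $\Br X$, then apply the full sequence \eqref{conjectureE} and conclude via the index. All the technical points you flag do check out: $\varprojlim_n G/n \cong \prod_p \varprojlim_k G/p^k$ holds for any abelian group by CRT, so the componentwise recipe does define an element of the inverse limit; the pairing with a $p$-primary Brauer class factors through the $/p^k$ quotient and hence sees only the $p$-component, and is $\Z_p$-linear there, so $d^{-1}$ pulls out; and the archimedean ambiguity in degrees is irrelevant since you compare degrees at a finite place. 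The paper's route is shorter because it never needs to build a new adelic class or verify the primary decomposition of the pairing; yours has the minor virtue of exhibiting explicitly a degree-$1$ class in the completed adelic Chow group orthogonal to the whole Brauer group, in the spirit of Proposition~\ref{prop:0cyclesfiniteindex}.
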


	\begin{proof}
		Set $\Q_d := \prod \Q_p$ and $\Z_d := \prod{\Z_p}$, where in both cases the product ranges over the primes dividing $d$. Exactness of \eqref{conjectureE} implies the exactness of its $d$-adic part,
        \[
            \varprojlim_m (\CH_0X)/d^m \to  \varprojlim_m \prod_v (\CH_0'X_v)/d^m \to \Hom((\Br X)[d^\infty],\Q_d/\Z_d)\,.
        \]
		If $(z_v) \in \prod_v\CH_0^{(1)}X_v$ is orthogonal to $(\Br X)[d^{\infty}]$, then by exactness we can find, for every $m \ge 1$, some $z_m \in \CH_0X$ such that for every $v$, $z_m \equiv z_v \pmod{d^m\CH_0'X_v}$. In particular, the degree of $z_m$ is prime to $d$, so there is a global $0$-cycle of degree prime to $d$ on $X$. On the other hand, there is a $0$-cycle of degree $d$ on $X$ by Lemma~\ref{lem:perioddividesdegree}, so there must also be a $0$-cycle of degree $1$ on $X$.
	\end{proof}
	
	{Unconditionally we can show that the $\deg(\calL)$-primary part of the Brauer group captures the Brauer-Manin obstruction to the existence of a global $0$-cycle of degree $1$ when $\Br X/\Br_0 X$ has finite exponent.}
	
    \begin{Proposition}\label{prop:0cyclesfiniteindex}
        Let $(X,\calL)$ be a nice polarized variety of degree $d$. Assume that $\Br X/\Br_0 X$ has finite exponent.  Then there is no Brauer-Manin obstruction to the existence of a $0$-cycle of degree $1$ if and only if there exists $(z_v) \in \prod_v\CH_0^{(1)}X_v$ that is orthogonal to $(\Br X)[d^{\infty}]$. 
    \end{Proposition}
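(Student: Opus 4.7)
The forward implication is immediate: any adelic $0$-cycle of degree one orthogonal to the full Brauer group is orthogonal to the subgroup $(\Br X)[d^{\infty}]$, so one may take $(z_v)$ to be an obstructing cycle itself.

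For the nontrivial direction, the plan is to modify the given cycle $(z_v) \in \prod_v \CH_0^{(1)} X_v$, orthogonal to $(\Br X)[d^{\infty}]$, into a new adelic $0$-cycle $(w_v)$ of local degree $1$ orthogonal to all of $\Br X$. First I would invoke Lemma~\ref{lem:perioddividesdegree} to produce a global $0$-cycle $z_0$ of degree $d$ on $X$. Using the hypothesis that $\Br X / \Br_0 X$ has finite exponent, let $N$ denote the exponent of the prime-to-$d$ part $(\Br X / \Br_0 X)[d^\perp]$; since $\gcd(N,d) = 1$, B\'ezout produces integers $a, b$ with $aN + bd = 1$. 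Set
\[
(w_v) := aN\,(z_v) + b\,z_0,
\]
so that $w_v$ has local degree $aN + bd = 1$ at every place.

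To check orthogonality, I would use that $\Br X$ is torsion (since $X$ is regular) to decompose each $\alpha \in \Br X$ as $\alpha = \alpha_1 + \alpha_2$ with $\alpha_1 \in (\Br X)[d^\infty]$ and $\alpha_2 \in (\Br X)[d^\perp]$, via the primary decomposition of the cyclic group generated by $\alpha$. Then $(z_0, \alpha) = 0$ by global reciprocity applied to the global cycle $z_0$, the pairing $((z_v), \alpha_1)$ vanishes by hypothesis, and the image of $\alpha_2$ in $\Br X / \Br_0 X$ is killed by $N$, so $N\alpha_2 \in \Br_0 X$ is the pullback of some $\beta_0 \in \Br k$. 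Pairing the degree-$1$ cycle $(z_v)$ against the constant class $\beta_0$ collapses to the sum of the local invariants of $\beta_0$ at the places of $k$, which vanishes by global reciprocity in $\Br k$; hence $aN\bigl((z_v),\alpha_2\bigr) = a\bigl((z_v), N\alpha_2\bigr) = 0$. Combining these contributions gives $\bigl((w_v), \alpha\bigr) = 0$ for every $\alpha \in \Br X$, as required.

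The principal obstacle is coordinating the two applications of global reciprocity (one for $\Br X$ with the global cycle $z_0$, the other for $\Br k$ with the global class $\beta_0$) through the primary decomposition of $\alpha$. The finite-exponent hypothesis on $\Br X / \Br_0 X$ is what makes the construction work: it furnishes a single integer $N$ coprime to $d$ that kills the $d$-coprime part of $\Br X/\Br_0 X$, so that B\'ezout can be applied to balance the degree with $d$. Without this assumption no uniform multiplier exists and the argument collapses.
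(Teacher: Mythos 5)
Your proof is correct and follows essentially the same route as the paper's: both produce the degree-$1$ adelic cycle as an integral combination $a m (z_v) + b z_0$ where $z_0$ is the global degree-$d$ cycle from Lemma~\ref{lem:perioddividesdegree} and $m$ is the prime-to-$d$ part of the exponent of $\Br X/\Br_0 X$, and both verify orthogonality via global reciprocity applied to $z_0$ and to the constant classes. Your verification by primary decomposition of each $\alpha$ is just a reorganization of the paper's observation that $(m z_v)$ is already orthogonal to all of $\Br X$.
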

	\begin{proof}
        If there is no Brauer-Manin obstruction to the existence of a $0$-cycles of degree $1$, then by definition there exists a $(z_v) \in \prod_v \CH_0^{(1)}X_v$ that is orthogonal to $\Br X$ and hence to $(\Br X)[d^{\infty}]$. Conversely, suppose there exists $(z_v) \in \prod_v \CH_0^{(1)}X_v$ that is orthogonal to $(\Br X)[d^\infty]$ and let $m\in \Z$ be the maximal divisor of the exponent of $\Br X/\Br_0 X$ that is coprime to $d$.  Since $\deg z_v = \deg z_w$ for all places $v,w$, the adelic $0$-cycle $(z_v)$ is orthogonal to $\Br_0 X$. Hence, the bilinearity of the pairing and the definition of $m$ imply that $(mz_v)$ is orthogonal to $\Br X$.  Furthermore, by Lemma~\ref{lem:perioddividesdegree}, there is a $k$-rational $0$-cycle of degree $d$ on $X$; let $(z) \in \prod_v\CH_0X_v$ be its image. By global reciprocity every integral linear combination of $(z)$ and $(mz_v)$ is orthogonal to $\Br X$. Since $m$ is relatively prime to $d$, some integral linear combination of $(z)$ and $(mz_v)$ has degree $1$, as desired.
    \end{proof}

\section{Torsors under abelian varieties}\label{sec:AV}

	{In this section we prove the following theorem.}
	\begin{Theorem}\label{thm:PreciseMainPHS}
		Let $k$ be a number field, let $Y$ be a smooth projective variety over $k$ that is birational to a $k$-torsor $V$ under an abelian variety, and let $P$ be a positive integer that is divisible by the period of $V$. For any subgroup $B \subset \Br Y$ the following implication holds:
		\[
			Y(\A_k)^B = \emptyset \quad \Longrightarrow \quad  Y(\A_k)^{B[P^\infty]} = \emptyset\,.
		\] 
	\end{Theorem}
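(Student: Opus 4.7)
My plan is to first reduce to the case $Y = V$. By Lemma~\ref{lem:birationalinvariance}, both the hypothesis $Y(\A_k)^B = \emptyset$ and the conclusion $Y(\A_k)^{B[P^\infty]} = \emptyset$ are preserved under the canonical identification of Brauer groups induced by the birational map $Y \dasharrow V$, so one may assume $Y = V$ and $B \subset \Br V$. It then suffices to prove the contrapositive: given $(x_v) \in V(\A_k)^{B[P^\infty]}$, produce $(y_v) \in V(\A_k)^B$.

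The candidate will be $y_v = x_v + a_v$ for a suitable $a \in A(\A_k)$, using the simply transitive action of $A$ on $V$. For each $\alpha \in \Br V$, the difference
\[
\chi_\alpha(a) \;:=\; (x+a,\alpha) - (x,\alpha)
\]
(where $(\,,\,)$ is the global Brauer-Manin pairing into $\Q/\Z$) defines a continuous homomorphism $A(\A_k) \to \Q/\Z$ whose image lies in $\frac{1}{\ord(\alpha)}\Z/\Z$. Geometrically, after trivializing $\Vbar \cong \Abar$, the class $\tau_a^*\alpha - \alpha$ lies in $\Br_1 V$ and is controlled by the antisymmetric part of $\alpha$ in the Mumford decomposition of $\Br\Abar$; this gives a formula expressing $\chi_\alpha$ as a pairing of $a$ with a class in $\Br A$ of the same order as $\alpha$.

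The essential use of the period hypothesis $P\cdot[V] = 0$ in $H^1(k,A)$ is to compare $V$ with $A$ ``away from $P$''. Concretely, since $P$ kills $[V]$ the multiplication-by-$P$ map on $A$ descends through $V$, and via this factorization one shows that $\chi_\alpha$ vanishes for $\alpha \in B[P^\infty]$ (so translations automatically preserve orthogonality to $B[P^\infty]$), while for $\alpha \in B$ of order coprime to $P$ the homomorphism $\chi_\alpha$ surjects onto $\frac{1}{\ord(\alpha)}\Z/\Z$; in particular, the equation $\chi_\alpha(a) = -(x,\alpha)$ is individually solvable. Intuitively, the period measures the obstruction to trivializing $V$ and this obstruction is $P$-primary, so off $P$ the torsor $V$ and the group $A$ behave identically from the standpoint of the Brauer-Manin pairing.

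The main obstacle will be to solve this cancellation \emph{simultaneously} over all $\alpha$ in $B \setminus B[P^\infty]$, since $B$ may be infinite while $A(\A_k)$ is only locally compact. The plan is to realize the set of good translations as an intersection of closed cosets in $A(\A_k)$ and to prove non-emptiness using a Poitou-Tate-style duality argument for $A$: the orthogonality requirements assemble into an element of a pro-finite group to which $A(\A_k)$ maps with large enough image when all orders involved are invertible modulo $P$. This duality/inverse-limit step is where the bulk of the technical work will lie; everything else is formal manipulation of the translation action and the period.
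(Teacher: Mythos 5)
Your reduction to $Y=V$ and the idea of passing to the contrapositive are fine, but the core of your argument rests on a claim that is false: that $\chi_\alpha(a) = (x+a,\alpha)-(x,\alpha)$ vanishes for all $\alpha \in B[P^\infty]$, i.e.\ that translation by $a\in A(\A_k)$ automatically preserves orthogonality to the $P$-primary part of $B$. Already for $V=A=E$ an elliptic curve (period $1$, so any $P$ is allowed) and $\alpha \in (\Br_1 E)[P]$ the image of a class $\beta \in \HH^1(k,\Pic^0\Ebar)=\HH^1(k,E^\vee)$, the evaluation $x\mapsto (x,\alpha)$ is, up to a constant, the sum of local Tate pairings $\sum_v\langle x_v,\beta_v\rangle_v$; this is additive and generically nonconstant in $x$, so $\chi_\alpha\ne 0$ for such $P$-primary $\alpha$. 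The period hypothesis gives you no translation-invariance here --- the algebraic $P$-primary classes are exactly the ones that move under translation. Consequently your candidate $x+a$ need not stay orthogonal to $B[P^\infty]$, and you are forced to solve the system $\chi_\alpha(a)=-(x,\alpha)$ for $\alpha$ of order prime to $P$ \emph{simultaneously} with $\chi_\alpha(a)=0$ for $\alpha\in B[P^\infty]$; these constraints are entangled and there is no duality statement that guarantees a common solution. (There are further unproved assertions --- that $\chi_\alpha$ is a homomorphism for transcendental $\alpha$, and that it surjects onto $\frac{1}{\ord(\alpha)}\Z/\Z$ in the prime-to-$P$ case --- but the vanishing claim is the fatal one.)

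The paper's proof replaces translation by a self-covering. After reducing to finite $B$ by compactness (Lemma~\ref{lem:compactness}), let $d$ be the exponent of $B[P^\infty]$ and $m$ the prime-to-$P$ part of the exponent of $B$. Since $n[V]=[V]$ for $n$ a power of $m$ with $n\equiv 1 \bmod Pd$, one gets an \'etale map $\pi\colon V\to V$ lying over $[n]\colon A\to A$. Because $[n]^*$ is multiplication by $n$ on $\Pic^0\Vbar$ but by $n^2$ on $\NS\Vbar$ and on $\Br\Vbar$, a diagram chase shows an iterate $\rho$ of $\pi$ annihilates the $m$-torsion of $\Br V/\Br_0 V$ while acting as the identity on its $d$-torsion (Lemma~\ref{lem:killntors}, using Lemma~\ref{lem:Br1/2}). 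Then $\rho\bigl(V(\A_k)^{B[P^\infty]}\bigr)\subset V(\A_k)^{B}$ by functoriality and reciprocity, which is the desired implication. The point is that a degree-$n^{2r\dim A}$ self-map can act differently on the two torsion parts of the Brauer group, which is exactly what a translation cannot do.
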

    \begin{Remark}
        Theorem~\ref{thm:PreciseMainPHS} is strongest when $P = \per(V)$.  However, determining $\per(V)$ is likely more difficult than determining if $Y(\A_k)^{B[P^\infty]}\neq \emptyset$, so the theorem will often be used for an integer $P$ that is only known to bound the period.
    \end{Remark}

    \begin{proof}[Proof of Theorem~\ref{thm:MainTorsor}]
        This follows immediately from Theorem~\ref{thm:PreciseMainPHS} and Lemma~\ref{lem:perioddividesdegree}.
    \end{proof}

	\begin{Corollary}\label{cor:torsors}
		Every $k$-torsor $V$ of period $P$ under an abelian variety satisfies $\BMd{P}$ and $\BMdd{P}$. 
	\end{Corollary}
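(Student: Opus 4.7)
The plan is to deduce both claims as immediate applications of Theorem~\ref{thm:PreciseMainPHS} to the torsor $V$ itself (which is trivially birational to $V$), using two different choices of the subgroup $B \subset \Br V$.

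For $\BMprimary{P}$, I would apply Theorem~\ref{thm:PreciseMainPHS} with $B = \Br V$. The theorem then yields the implication
\[
V(\A_k)^{\Br V} = \emptyset \quad \Longrightarrow \quad V(\A_k)^{(\Br V)[P^{\infty}]} = \emptyset,
\]
which is precisely the defining condition for $V$ to satisfy $\BMd{P}$.

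For $\BMcoprime{P}$, I would instead take $B = (\Br V)[P^{\perp}]$ and exploit the observation that an element of $B$ which is also $P$-primary must have order simultaneously a power of $P$ and coprime to $P$, hence must be trivial. Thus $B[P^{\infty}] = 0$ and $V(\A_k)^{B[P^{\infty}]} = V(\A_k)$. Theorem~\ref{thm:PreciseMainPHS} then reads
\[
V(\A_k)^{(\Br V)[P^{\perp}]} = \emptyset \quad \Longrightarrow \quad V(\A_k) = \emptyset,
\]
and the contrapositive is exactly the defining condition for $\BMdd{P}$.

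Since both halves follow formally from Theorem~\ref{thm:PreciseMainPHS} by appropriate choice of $B$, there is no genuine obstacle in the proof of the corollary itself; all of the difficulty is already absorbed in the proof of Theorem~\ref{thm:PreciseMainPHS}, and the role of the corollary is simply to record the two natural specializations.
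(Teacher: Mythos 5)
Your proposal is correct and matches the paper's own proof, which likewise applies Theorem~\ref{thm:PreciseMainPHS} with $B = \Br V$ and $B = (\Br V)[P^\perp]$ respectively. Your extra remark that $B[P^\infty]$ is trivial in the second case, so that the theorem's conclusion reduces to the contrapositive of the defining implication of $\BMdd{P}$, is exactly the intended (and in the paper unstated) justification.
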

    \begin{proof}
        We apply the theorem with $B = \Br X$ and $B = \left(\Br X\right)[P^\perp]$.
    \end{proof}

	\begin{Remark}\label{Remark:ConditionalProof}
		When $\Sha(k,A)$ is finite (i.e., conjecturally always) one can deduce that torsors of period $P$ under $A$ satisfy $\BMd{P}$ using the well known result of Manin relating the Brauer-Manin and Cassels-Tate pairings.  A generalization of Manin's result by Harari and Szamuely~\cite{HarariSzamuely2008} can be used to prove that torsors under semiabelian varieties satisfy $\BMprimary{P}$, conditional on finiteness of Tate-Shafarevich groups (see Proposition~\ref{prop:Bcyr}).
	\end{Remark}

	\begin{Corollary}\label{cor:albob}
		Suppose $X$ is a nice $k$-variety such that $\Alb^1_X(\A_k)^{\Br}= \emptyset$. Then degrees capture the Brauer-Manin obstruction to rational points on $X$.
	\end{Corollary}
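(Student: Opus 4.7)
The plan is to pull back the Brauer-Manin obstruction on $\Alb^1_X$ to one on $X$ via the Albanese morphism $\alpha \colon X \to \Alb^1_X$, which is a canonical $k$-morphism to a torsor under the abelian variety $\Alb^0_X$. Let $d$ be any degree of a $k$-rational globally generated ample line bundle on $X$; I must show that $X$ satisfies $\BMd{d}$. If $X(\A_k)^{\Br} \neq \emptyset$ there is nothing to prove, so assume $X(\A_k)^{\Br} = \emptyset$ (note that this is consistent with the hypothesis, since $\alpha$ induces a map $X(\A_k)^{\Br} \to \Alb^1_X(\A_k)^{\Br}$).

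The first key observation is that the period of the torsor $\Alb^1_X$ divides $d$. Indeed, by definition the period of $X$ equals the order of $\Alb^1_X$ in $\HH^1(k,\Alb^0_X)$, and by Corollary~\ref{cor:perioddividesdegree} this divides $d$. Consequently, Theorem~\ref{thm:MainTorsor} applied to the $\Alb^0_X$-torsor $\Alb^1_X$ (with $B = \Br \Alb^1_X$) shows that $\Alb^1_X$ satisfies $\BMd{d}$.

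Combining this with the standing hypothesis $\Alb^1_X(\A_k)^{\Br} = \emptyset$, Lemma~\ref{lem:pushpull}\eqref{case:pull} applied to $\alpha\colon X \to \Alb^1_X$ yields that $X$ satisfies $\BMd{d}$. Since $d$ was an arbitrary degree of a $k$-rational globally generated ample line bundle on $X$, this is exactly the statement that degrees capture the Brauer-Manin obstruction on $X$.

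There is no real obstacle here: the corollary is a formal consequence of Theorem~\ref{thm:MainTorsor}, Corollary~\ref{cor:perioddividesdegree}, and the functoriality of the Brauer-Manin pairing encoded in Lemma~\ref{lem:pushpull}\eqref{case:pull}. The only thing one needs to verify carefully is that the Albanese morphism $X \to \Alb^1_X$ exists canonically as a $k$-morphism (without assuming $X(k) \neq \emptyset$), which is standard from the universal property of the Albanese scheme.
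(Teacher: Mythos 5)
Your proof is correct and follows essentially the same route as the paper's: the period of $\Alb^1_X$ divides any such degree $d$ (Corollary~\ref{cor:perioddividesdegree}), so Theorem~\ref{thm:MainTorsor} gives a $d$-primary obstruction on $\Alb^1_X$, which pulls back along the Albanese map to $X$ (the paper states this pullback directly, you package it as Lemma~\ref{lem:pushpull}\eqref{case:pull}). No gaps.
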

	\begin{proof}
		Let $P$ denote the Albanese period of $X$.  By Theorem~\ref{thm:PreciseMainPHS}, there is a $P$-primary Brauer-Manin obstruction to the existence of rational points on $\Alb^1_X$. This pulls back to give a $P$-primary Brauer-Manin obstruction to the existence of rational points on $X$.  We conclude by noting that $P \mid \deg(\calL)$ for any globally generated ample line bundle $\calL \in \Pic X$ by Lemma~\ref{lem:perioddividesdegree}.
	\end{proof}

 {   
 For a nice variety $X$ over $K$ we define the subgroup $\Br_{1/2}X\subset \Br X$ as follows. Let $S$ denote the image of the map $\HH^1(K,\Pic^0\Xbar) \to \HH^1(K,\Pic \Xbar)$ induced by the inclusion $\Pic^0\Xbar \subset \Pic \Xbar$ and let $\Br_{1/2}X$ denote the preimage of $S$ under the map $\Br_1X \to \HH^1(K,\Pic \Xbar)$ that is given by the Hochschild-Serre spectral sequence.}
    
    	\begin{Lemma}\label{lem:killntors}
    		Let $V$ be a $K$-torsor under an abelian variety $A$ over $K$. Let $m$ and $d$ be relatively prime integers with $m$  relatively prime to the period of $V$. If $\Br_{1/2}V$ has finite index in $\Br V$, then there exists an \'etale morphism $\rho\colon V \to V$ such that the induced map,
    		\[
    			\rho^*\colon \frac{\Br V}{\Br_0V} \To \frac{\Br V}{\Br_0V}\,,
    		\]
    		annihilates the $m$-torsion subgroup and is the identity on the $d$-torsion subgroup. {Moreover, one may choose $\rho$ so that it agrees, geometrically, with $[m^r] \colon A \to A$ for some integer $r$.}
    	\end{Lemma}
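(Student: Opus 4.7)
The strategy is to construct $\rho$ as an iterate of an \'etale self-map of $V$ that geometrically lifts a multiplication map on $A$, and analyze the induced action on $\Br V/\Br_0 V$ via a filtration whose graded pieces carry explicit scalar actions. First, since $\gcd(m, Pd) = 1$ where $P = \per(V)$, one can choose an integer $s \geq 1$ with $m^s \equiv 1 \pmod{Pd}$. The congruence $m^s \equiv 1 \pmod P$ gives $(m^s - 1)[V] = 0$ in $\HH^1(K, A)$, so the pushforward torsor $[m^s]_* V$ is isomorphic to $V$ as an $A$-torsor, and any choice of isomorphism yields an \'etale morphism $\rho_0 \colon V \to V$ which, after base change to $\Kbar$ and a choice of base point, agrees with $[m^s] \colon A \to A$ composed with a translation.

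Next I would analyze $\rho_0^*$ on $M := \Br V / \Br_0 V$ using the filtration $0 \subset \Br_{1/2} V / \Br_0 V \subset \Br_1 V / \Br_0 V \subset M$. The geometric inputs are that translations act trivially on $\Pic^0 \bar A$, $\NS \bar A$, and $\Br \bar A$, while $[m^s]^*$ acts as multiplication by $m^s$ on $\Pic^0 \bar A$ (as $[m^s]$ is self-dual) and by $m^{2s}$ on $\NS \bar A$ and on $\Br \bar A$ (as these are subquotients of $\HH^2$, on which $[n]^*$ acts as $n^2$). Combined with the Hochschild-Serre embeddings $\Br_1 V / \Br_0 V \hookrightarrow \HH^1(\Gamma_K, \Pic \bar V)$ and $\Br V / \Br_1 V \hookrightarrow (\Br \bar V)^{\Gamma_K}$, and the definition of $\Br_{1/2}$, this shows $\rho_0^*$ acts by the scalar $m^s$ on $\Br_{1/2} V / \Br_0 V$ and by the scalar $m^{2s}$ on both $\Br_1 V / \Br_{1/2} V$ and $\Br V / \Br_1 V$. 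A Cayley--Hamilton-type argument on the filtration then gives $(\rho_0^* - m^s)(\rho_0^* - m^{2s})^2 = 0$ on $M$.

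Finally, I iterate. On $M[m]$, where $m^s \equiv m^{2s} \equiv 0 \pmod m$, the polynomial identity becomes $(\rho_0^*)^3 = 0$; on $M[d]$, where $m^s \equiv m^{2s} \equiv 1 \pmod d$, it becomes $(\rho_0^* - 1)^3 = 0$. Writing $\rho_0^* = 1 + \nu$ with $\nu^3$ annihilating $M[d]$, binomial expansion gives $(\rho_0^*)^N|_{M[d]} = 1 + N\nu + \binom{N}{2}\nu^2$, which equals the identity on $M[d]$ whenever $d \mid N$ and $d \mid \binom{N}{2}$. Taking $N = 3d^2$ (which also ensures $N \geq 3$), the map $\rho := \rho_0^N$ is \'etale, annihilates $M[m]$, restricts to the identity on $M[d]$, and geometrically agrees with $[m^r]$ for $r := sN$. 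The main obstacle is precisely this last step: the polynomial identity only gives an identity up to a nilpotent on $M[d]$, and passing to an actual identity requires taking an iterate whose binomial coefficients vanish mod $d$, enlarging the exponent $r$; the finite-index hypothesis on $\Br_{1/2}V$ ensures the filtration is short enough to keep the nilpotency order bounded.
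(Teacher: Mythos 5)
Your proposal is correct, and its skeleton matches the paper's: you construct the same self-map (an $n$-covering structure with $n=m^s\equiv 1\bmod{Pd}$, via the triviality of $(n-1)[V]$ in $\HH^1(K,A)$) and compute the same scalar actions, namely $n$ on $\Br_{1/2}V/\Br_0V$ and $n^2$ on $\Br_1V/\Br_{1/2}V$ and on $\Br V/\Br_1 V$. Where you genuinely diverge is the endgame. The paper first runs two diagram chases to show $(\pi^3)^*$ kills the $n$-torsion of $\Br V/\Br_0V$, then handles the $d$-torsion by combining the identity action on $(\Br_{1/2}V/\Br_0V)[d]$ with injectivity of $(\pi^3)^*$ on $(\Br V)[d^\infty]$ (a result of Ieronymou--Skorobogatov--Zarhin on pullback along maps of degree prime to $d$); \emph{finiteness} of $(\Br V/\Br_{1/2}V)[d]$ is then needed to upgrade injectivity to ``some power $\sigma=\pi^{3s}$ is the identity,'' after which an induction shows $\sigma^d$ works. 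You instead package the three scalar actions into the single relation $(\rho_0^*-n)(\rho_0^*-n^2)^2=0$ on $\Br V/\Br_0V$, which specializes to $(\rho_0^*)^3=0$ on the $m$-torsion and to $(\rho_0^*-1)^3=0$ on the $d$-torsion, and then kill the unipotent part by raising to a power $N$ with $d\mid N$ and $d\mid\binom{N}{2}$. This is cleaner, avoids the external injectivity input, and --- contrary to your closing remark --- makes no use whatsoever of the finite-index hypothesis on $\Br_{1/2}V$: the filtration has length three unconditionally, so your argument in fact proves the lemma over any field of characteristic zero without that hypothesis (over a number field the hypothesis is automatic anyway, by Lemma~\ref{lem:Br1/2}). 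The only caveats, neither of which is a gap, are that the translation ambiguity in identifying $\rho_0$ with $[m^s]$ geometrically must be absorbed into the choice of isomorphism $\Vbar\simeq\Abar$ (possible since $A(\Kbar)$ is divisible), and that the fact that $[n]^*$ is multiplication by $n^2$ on $\Br\Abar$ should be attributed to Berkovich rather than waved at via ``subquotients of $\HH^2$.''
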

    	\begin{proof}
    		Since $V$ is a torsor under $A$, there is an isomorphism $\psi \colon\Vbar \to \Abar$ of varieties over $\Kbar$, such that the torsor structure of $V$ is given by $a\cdot v = \psi^{-1}(a + \psi(v))$, for $a \in A(\Kbar)$ and $v \in V(\Kbar)$. Moreover, $\psi$ induces group isomorphisms $\Pic^0\Abar \simeq \Pic^0\Vbar$, $\NS\Abar \simeq \NS\Vbar$, and $\Br\Abar \simeq \Br\Vbar$.
		
    		Let $P$ be the period of $V$ and let $n$ be a power of $m$ such that $n \equiv 1 \bmod Pd$. Then $nV=V$ in $\HH^1(K,A)$, so by \cite[Prop. 3.3.5]{TorsorsAndRationalPoints} $V$ can be made into an $n$-covering of itself. This means that there is an \'etale morphism $\pi\colon V \to V$ such that $\psi\circ\pi = [n] \circ \psi$ where $[n]$ denotes multiplication by $n$ on $A$. {We will show that an iterate of $\pi$ has the desired properties.}

            Since $[n]$ induces multiplication by $n$ on $\Pic^0\Abar$, the morphism $\pi$ induces multiplication by $n$ on $\Pic^0\Vbar$. Indeed, $\pi^* = \psi^*[n]^*(\psi^{-1})^*=\psi^*n(\psi^{-1})^* = n\psi^*(\psi^{-1})^* = n$, where the penultimate equality follows since $\psi^*$ is a homomorphism.  
             Similarly, $\pi$ induces multiplication by $n^2$ on $\NS\Vbar$, since $[n]$ induces multiplication by $n^2$ on $\NS\Abar$. Thus we have a commutative diagram with exact rows
    		\[
    			\xymatrix{
    				\HH^1(K,\Pic^0\Vbar) \ar[r]^i \ar[d]^n& \HH^1(K,\Pic \Vbar) \ar[r]^j\ar[d]^{\pi^*}& \HH^1(k,\NS\Vbar)\ar[d]^{n^2} \\
    				\HH^1(K,\Pic^0\Vbar) \ar[r]^i& \HH^1(K,\Pic \Vbar) \ar[r]^j& \HH^1(k,\NS\Vbar)
    			}
    		\]
    		We claim that $(\pi^2)^*$ annihilates the $n$-torsion of $\HH^1(K,\Pic \Vbar)$. Since $\Br_1V/\Br_0V$ embeds into $\HH^1(K,\Pic \Vbar)$, this would imply that $(\pi^2)^*$ annihilates the $n$-torsion in $\Br_1V/\Br_0V$.  Let us prove the claim. For any $x \in \HH^1(K,\Pic \Vbar)[n]$, we have that $j(\pi^*(x)) = n^2 j(x) = j(n^2x) = 0$, so $\pi^*(x) = i(y)$ for some $y \in \HH^1(K,\Pic^0\Vbar)$ such that $ny \in \ker(i)$. Then $(\pi^2)^*(x) = \pi^*(i(y))  = i(ny) = 0$, as desired. 
		
    		Multiplication by $n$ on $A$ induces multiplication by $n^2$ on $\Br\Abar$ (see~\cite[Middle of page 182]{Berkovich}). Thus $\pi^*$ acts as multiplication by $n^2$ on $\Br\Vbar$, and we have a commutative diagram with exact rows,
    		\[
    			\xymatrix{
    				0 \ar[r]&\Br_1V/\Br_0V \ar[r]^{i'} \ar[d]^{\pi^*}& \Br V/\Br_0V \ar[r]^{j'}\ar[d]^{\pi^*}& \Br V/\Br_1V \ar[d]^{n^2} \\
    				0\ar[r]&\Br_1V/\Br_0V  \ar[r]^{i'}&\Br V/\Br_0V \ar[r]^{j'}&\Br V/\Br_1V
    			}
    		\]
    		The $n$-torsion in $\Br_1V/\Br_0V$ is killed by $(\pi^2)^*$ and  $i'$ is injective, so a similar diagram chase as above shows that $(\pi^3)^*$ kills the $n$-torsion, and hence the $m$-torsion, in $\Br V/\Br_0V$.
		
    		It thus suffices to show that some power of $(\pi^3)^*$ is the identity map on the $d$-torsion subgroup of $\Br V /\Br_0V$. {By definition, the image of the composition $(\Br_{1/2}V \to \Br_{1} V\to \HH^1(K,\Pic \Vbar))$ is contained in $\HH^1(K,\Pic^0 \Vbar)$}, so $\pi^*$ acts as multiplication by $n$ on $\Br_{1/2}V/\Br_0V$. In particular, since $n \equiv 1 \bmod d$, $\pi^*$ acts as the identity on the $d$-torsion subgroup of $\Br_{1/2}V/\Br_0V$. Additionally, since the degree of $\pi^3$ is relatively prime to $d$, the induced map $(\pi^3)^*:(\Br V)[d^\infty] \to (\Br V)[d^\infty]$ is injective (see \cite[Prop. 1.1]{ISZ}). Together this shows that $(\pi^3)^*$ is injective on $(\Br V/\Br_{1/2} V)[d]$. 
            Since $(\Br V/\Br_{1/2}V)[d]$ is finite, some power $\sigma = \pi^{3s}$ of $\pi^3$ acts as the identity on it.
            Thus, for every $\calA \in \Br V$ such that $d\calA\in \Br_0 V$ there is some $\calA' \in \Br_{1/2}V$ such that 
            \[
                \sigma^*(\calA) = \calA + \calA' \quad\textup{and}\quad
                d\calA'\in \Br_0 V.
            \]
            Again $\sigma^*$ is the identity on $(\Br_{1/2}V/\Br_0V)[d]$, so by induction we get $(\sigma^d)^*(\calA) \equiv \calA + d\calA' \equiv \calA \pmod{\Br_0V}$. Therefore $\rho = \sigma^d$ has the desired properties.
    	\end{proof}

    	\begin{Lemma}\label{lem:Br1/2}
		Let $k$ be a number field. If $V$ is a $k$-torsor under an abelian variety, then $\Br_{1/2}V$ has finite index in $\Br V$. 
    	\end{Lemma}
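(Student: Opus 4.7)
The plan is to show finiteness of $\Br V / \Br_{1/2}V$ by splitting it via the standard filtration $\Br_0 V \subset \Br_{1/2}V \subset \Br_1 V \subset \Br V$ and showing that each of the two relevant successive quotients, $\Br_1 V / \Br_{1/2}V$ and $\Br V / \Br_1 V$, is finite.

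For the algebraic piece $\Br_1 V / \Br_{1/2}V$, I would begin with the exact sequence of $\Gamma_k$-modules
\[
0 \To \Pic^0\Vbar \To \Pic \Vbar \To \NS\Vbar \To 0
\]
and take Galois cohomology. By the very definition of $\Br_{1/2}V$, the map $\Br_1 V \to \HH^1(k,\Pic\Vbar)$ coming from Hochschild--Serre sends $\Br_{1/2}V$ into the image $S$ of $\HH^1(k,\Pic^0\Vbar)$, so $\Br_1 V / \Br_{1/2} V$ injects into $\HH^1(k,\Pic\Vbar)/S$, which in turn injects into $\HH^1(k,\NS \Vbar)$. Via the isomorphism $\psi$ of the previous lemma, $\NS \Vbar \cong \NS\Abar$ is a finitely generated free abelian group on which $\Gamma_k$ acts continuously, hence through a finite quotient $\Gal(L/k)$. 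Then inflation--restriction reduces finiteness of $\HH^1(k,\NS\Vbar)$ to finiteness of $\HH^1(\Gal(L/k),\NS \Vbar)$ (automatic, since the group is finite and the module is finitely generated) together with the vanishing of $\HH^1(L,\NS\Vbar)=\Hom_{\textup{cts}}(\Gamma_L,\Z^r)$, which follows because $\Gamma_L$ is profinite and the target is torsion-free. This gives finiteness of $\Br_1 V/\Br_{1/2}V$.

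For the transcendental piece $\Br V / \Br_1 V$, Hochschild--Serre gives an injection into $(\Br \Vbar)^{\Gamma_k}$. Since $V$ is a torsor under the abelian variety $A$, the identification $\psi\colon \Vbar \stackrel{\sim}{\to} \Abar$ from Lemma~\ref{lem:killntors} induces a $\Gamma_k$-equivariant isomorphism $\Br \Vbar \simeq \Br \Abar$ (here one uses that the twist between the torsor structure on $\Vbar$ and the group structure on $\Abar$ is inner, so it acts trivially on the Brauer group of the underlying scheme). Thus $\Br V/\Br_1 V$ embeds into $(\Br \Abar)^{\Gamma_k}$. The main nontrivial ingredient is now the theorem of Skorobogatov--Zarhin that $(\Br \Abar)^{\Gamma_k}$ is finite for any abelian variety over a number field, which closes the argument.

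Combining the two finiteness statements yields $[\Br V : \Br_{1/2}V] < \infty$. The principal obstacle is the transcendental part: finiteness of $(\Br\Abar)^{\Gamma_k}$ is not elementary and requires invoking the Skorobogatov--Zarhin finiteness theorem, whereas the algebraic part reduces by pure diagram chasing to the standard finiteness of $\HH^1$ with coefficients in a finitely generated free $\Gamma_k$-module.
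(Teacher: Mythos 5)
Your proof is correct and follows essentially the same route as the paper: the filtration $\Br_{1/2}V \subset \Br_1 V \subset \Br V$, with $\Br V/\Br_1 V$ finite by the Skorobogatov--Zarhin finiteness theorem and $\Br_1 V/\Br_{1/2}V$ injecting into $\HH^1(k,\NS\Vbar)$, which is finite because $\NS\Vbar$ is finitely generated and torsion-free. The extra details you supply (the inflation--restriction argument and the translation-invariance of $\Br\Abar$ needed to transport the finiteness statement to the torsor) are correct elaborations of steps the paper leaves implicit.
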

    	\begin{proof}
    		We have a filtration
    		\[
    			\Br_{1/2}V \subset \Br_1V \subset \Br V\,.
    		\]
            The second inclusion has finite index by~\cite[Theorem 1.1]{SZ-Finiteness}. 
            Now we consider the first inclusion.  By the definition of $\Br_{1/2} V$, the quotient $\Br_1 V/\Br_{1/2} V$ injects into the cokernel of the map $\HH^1(k, \Pic^0 \Vbar)\to\HH^1(k, \Pic \Vbar)$.  Then by the long exact sequence in cohomology associated to the short exact sequence
            \[
                0 \to \Pic^0 \Vbar \to \Pic \Vbar \to \NS \Vbar \to 0,
            \]
            the cokernel in question injects into $\HH^1(k, \NS \Vbar)$.  The result now follows since $\NS \Vbar$ is finitely generated and torsion-free.
    	\end{proof}

	\begin{Lemma}\label{lem:compactness}
		Let $X$ be a smooth proper variety over a number field $k$ and let $B\subset \Br X$ be a subgroup.  If $X(\A_k)^B = \emptyset$, then there is a finite subgroup $\tilde{B} \subset B$ such that $X(\A_k)^{\tilde{B}}= \emptyset$.
	\end{Lemma}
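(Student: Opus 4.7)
The plan is to reduce the statement to a standard finite intersection property argument using the compactness of $X(\A_k)$. Since $X$ is smooth and proper, for each place $v$ of $k$ the set $X(k_v)$ is compact in the $v$-adic topology, and properness also guarantees that $X(\A_k)$ coincides with the product $\prod_{v \in \Omega_k} X(k_v)$ equipped with the product topology. By Tychonoff's theorem, $X(\A_k)$ is therefore compact.

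Next I would observe that for every $\alpha \in \Br X$ the evaluation map $\ev_\alpha \colon X(\A_k) \to \Q/\Z$ is continuous (with $\Q/\Z$ discrete), so the set $X(\A_k)^\alpha = \ev_\alpha^{-1}(0)$ is a closed (indeed, open-and-closed) subset of $X(\A_k)$. The hypothesis $X(\A_k)^B = \bigcap_{\alpha \in B} X(\A_k)^\alpha = \emptyset$ together with compactness of $X(\A_k)$ and the finite intersection property for closed sets then yields finitely many elements $\alpha_1, \ldots, \alpha_n \in B$ with $\bigcap_{i=1}^n X(\A_k)^{\alpha_i} = \emptyset$.

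Finally, I would take $\tilde{B}$ to be the subgroup of $B$ generated by $\alpha_1, \ldots, \alpha_n$. Since every element of $\Br X$ is torsion, $\tilde{B}$ is a finitely generated torsion abelian group, hence finite. Because $X(\A_k)^{\tilde{B}} \subset \bigcap_{i=1}^n X(\A_k)^{\alpha_i} = \emptyset$, this $\tilde{B}$ has the required property. There is no genuine obstacle here; the only point worth checking carefully is that properness of $X$ is what ensures $X(\A_k) = \prod_v X(k_v)$ so that compactness really applies, and that torsion-ness of Brauer classes turns a finite subset into a finite subgroup.
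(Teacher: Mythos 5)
Your proof is correct and follows essentially the same route as the paper's: compactness of $X(\A_k)$, closedness of each $X(\A_k)^\alpha$, the finite intersection property, and the observation that a finitely generated subgroup of the torsion group $\Br X$ is finite. The paper's version is just terser, taking the compactness of $X(\A_k)$ and the closedness of the evaluation conditions as given.
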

	
	\begin{proof}
		By hypothesis $X(\A_k)$ is compact. The lemma follows from the observation that 
		\[
			X(\A_k)^B = \bigcap_{\calA \in B}X(\A_k)^\calA\,.
		\]
		is an intersection of closed subsets of $X(\A_k)$. If the intersection of these subsets is empty, then there is some finite collection of subsets whose intersection is empty. Since $\Br X$ is torsion and a finitely generated torsion abelian group is finite, this completes the proof.
	\end{proof}
    
    	\begin{proof}[Proof of Theorem~\ref{thm:PreciseMainPHS}]
    		In light of Lemma~\ref{lem:birationalinvariance}, we may assume that $Y = V$. 
		
		If $V(\A_k)^{B} = \emptyset$, then, by Lemma~\ref{lem:compactness}, there is a finite subgroup $B' \subset B$ with $V(\A_k)^{B'} = \emptyset$.  Since $V(\A_k)^{B[P^\infty]} \subset V(\A_k)^{B'[P^\infty]}$, the desired implication holds if
    		\[
    			V(\A_k)^{B'} = \emptyset \quad\Longrightarrow\quad V(\A_k)^{B'[P^\infty]} = \emptyset\,,
    		\]
            for all finite subgroups $B' \subset B$. Thus, it suffices to prove the theorem when $B$ is finite.

            	Let $d$ be the exponent of $B[P^\infty]$ and let $m := \textup{exponent}(B)/d$ so that $m$ and $d$ are relatively prime.  Since we are working over a number field, Lemma~\ref{lem:Br1/2} allows us to apply Lemma~\ref{lem:killntors}. Let $\rho\colon V \to V$ be the morphism given by Lemma~\ref{lem:killntors}; then the functoriality of the Brauer-Manin pairing and global reciprocity give that 
            \[
                V(\A_k)^{B} \supset \rho(V(\A_k)^{\rho^*(B)}) = \rho(V(\A_k)^{\rho^*(B[P^\infty])}) = \rho(V(\A_k)^{B[P^\infty]}) \,.
            \]
    		In particular, if $V(\A_k)^B$ is empty, then so must be $V(\A_k)^{B[P^\infty]}$.
    	\end{proof}
	
    \subsection{A conditional extension to semiabelian varieties}
    
     	\begin{Proposition}\label{prop:Bcyr}
    		Let $k$ be a number field and let $V$ be a $k$-torsor under a semiabelian variety $G$ with abelian quotient $A$.  Assume that $\Sha(k,A)$ is finite. Then $V$ satisfies $\BMd{d}$ for any integer $d$ that is a multiple of the period of $V$.
    	\end{Proposition}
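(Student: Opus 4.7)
The plan is to combine the Harari--Szamuely generalization of Manin's theorem \cite{HarariSzamuely2008} with an elementary divisibility observation. As in the proof of Theorem~\ref{thm:PreciseMainPHS}, I first reduce to the case $V(\A_k) \ne \emptyset$, since otherwise $\BMd{d}$ holds vacuously. Under this reduction, $V(\A_k)^{\Br} = \emptyset$ forces $V(k) = \emptyset$, so $[V] \in \HH^1(k,G)$ lies in $\Sha^1(k,G)$ and has order equal to the period $P := \per(V)$, which divides $d$ by assumption.

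Next I pass from $\Br V$ to $\Br_1 V$. Applying the main duality of \cite{HarariSzamuely2008} to the $1$-motive $M = [0 \to G]$ (finiteness of $\Sha(k,A)$, combined with the vanishing of $\Sha^2$ for tori, controls the Tate--Shafarevich groups of $M$ and its dual), the algebraic Brauer group already captures the obstruction to rational points on $V$. Thus $V(k) = \emptyset$ together with $V(\A_k) \ne \emptyset$ implies $V(\A_k)^{\Br_1 V} = \emptyset$.

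To finish, I show $V(\A_k)^{(\Br_1 V)[d^\infty]} = \emptyset$, which yields the desired conclusion $V(\A_k)^{(\Br V)[d^\infty]} = \emptyset$ since $(\Br_1 V)[d^\infty] \subseteq (\Br V)[d^\infty]$. Suppose for contradiction that $(P_v) \in V(\A_k)^{(\Br_1 V)[d^\infty]}$ and let $\alpha \in \Br_1 V$. Writing $\alpha = \alpha_d + \alpha_{d^\perp}$ as the sum of its $d$-primary and prime-to-$d$ parts, the pairing $\langle (P_v), \alpha_d\rangle$ vanishes by hypothesis. The Manin--Harari--Szamuely formula expresses $\langle (P_v), \alpha_{d^\perp}\rangle$ as a Cassels--Tate pairing value between $[V]$ and a dual class $\beta$ in the Sha of $M^*$ whose order divides $\ord(\alpha_{d^\perp})$, hence is coprime to $P$. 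Since $[V]$ has order $P$, this value lies in $\frac{1}{P}\Z/\Z \cap \frac{1}{\ord(\beta)}\Z/\Z = 0$, so $\langle (P_v),\alpha\rangle = 0$ for every $\alpha \in \Br_1 V$, contradicting $V(\A_k)^{\Br_1 V} = \emptyset$.

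The main technical obstacle will be carefully invoking Harari--Szamuely's duality under just the hypothesis that $\Sha(k,A)$ is finite: this requires translating between the Tate--Shafarevich groups of $G$ and those of the $1$-motive $M$ and its dual via the exact sequence $0 \to T \to G \to A \to 0$, and verifying that the Manin--Cassels--Tate identification of the Brauer--Manin pairing behaves well with respect to the primary decomposition of $\Br_1 V$. Once the pairing formula is in hand, the divisibility computation in the final step is immediate.
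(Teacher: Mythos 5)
Your overall strategy --- Harari--Szamuely duality combined with the bilinearity/divisibility argument exploiting that $[V]$ has order $P\mid d$ --- is the same as the paper's, and your final computation $\frac{1}{P}\Z/\Z\cap\frac{1}{\ord(\beta)}\Z/\Z=0$ is exactly the right closing step. However, there is a genuine gap in your third paragraph: you apply ``the Manin--Harari--Szamuely formula'' to an arbitrary class $\alpha_{d^\perp}\in\Br_1V$ of order prime to $d$. That formula, $\langle[V],\beta\rangle_{\textup{CT}}=((P_v),\iota(\beta))$, holds only for classes in the image of the map $\iota\colon\Sha(k,G^*)\to\Bcyr(V)$, i.e.\ for classes whose local evaluation maps are constant; these lie in the subgroup $\Bcyr(V)=\ker\left(\Br_1V\to\bigoplus_v\Br_1V_v/\Br_0V_v\right)$, which is in general far smaller than $\Br_1V$ (already for an elliptic curve, $\Br_1V/\Br_0V\cong\HH^1(k,\Pic\Vbar)$ is much larger than the image of $\Sha(k,G^*)$). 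For a general $\alpha_{d^\perp}\in\Br_1V$ the value $((P_v),\alpha_{d^\perp})$ genuinely depends on $(P_v)$, there is no dual class computing it, and you cannot conclude that it vanishes. Indeed, if this step were valid it would show, essentially for free, that prime-to-$d$ algebraic classes never obstruct; for abelian varieties that is the content of the unconditional Theorem~\ref{thm:PreciseMainPHS}, which requires the \'etale self-map argument of Lemma~\ref{lem:killntors}, and no such statement is being claimed in the semiabelian setting.

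The repair is to use the stronger form of the Harari--Szamuely result that the paper invokes: the obstruction coming from $\Bcyr(V)$ alone is the only one. Concretely, if $V$ is everywhere locally soluble and $V(k)=\emptyset$, then nondegeneracy of the Cassels--Tate pairing $\Sha(k,G)\times\Sha(k,G^*)\to\Q/\Z$ (this is where finiteness of $\Sha(k,A)$ enters) produces a single $\beta\in\Sha(k,G^*)$ with $\langle[V],\beta\rangle_{\textup{CT}}\ne0$, hence a single class $\iota(\beta)$ with $V(\A_k)^{\iota(\beta)}=\emptyset$. Your divisibility argument should then be applied to $\beta$ itself rather than to an arbitrary Brauer class: writing $\beta=\beta_P+\beta_{P^\perp}$ inside the torsion group $\Sha(k,G^*)$, bilinearity kills $\langle[V],\beta_{P^\perp}\rangle_{\textup{CT}}$, so $\iota(\beta_P)\in(\Br V)[P^\infty]\subset(\Br V)[d^\infty]$ already provides the obstruction. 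With that change your proof coincides with the paper's.
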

    
        \begin{proof}
    		It is known that the obstruction coming from the group
        	\[
        		\Bcyr(V) := \ker\left(\Br_1V \to \bigoplus_v \Br_1V_v/\Br_0V_v\right)
        	\]
        	is the only obstruction to rational points on $V$~\cite[Theorem 1.1]{HarariSzamuely2008}. The extreme cases where $G$ is an abelian variety or a torus are due to Manin~\cite[Th\'eor\`eme 6]{Manin1971} and Sansuc~\cite [Corollaire 8.7]{Sansuc1981}, respectively. The proof of this fact is as follows. There is a homomorphism $\iota\colon\Sha(k,G^*) \to \Bcyr(X)$ (where $G^*$ denotes the $1$-motive dual to $G$) and a bilinear pairing of torsion abelian groups $\langle\,,\,\rangle_{\textup{CT}}:\Sha(k,G) \times \Sha(k,G^*) \to \Q/\Z$. This is related to the Brauer-Manin pairing via $\iota$ in the sense that for any $\beta \in \Sha(k,G^*)$ and $(P_v) \in V(\A_k)$, one has $\langle [V], \beta \rangle_{\textup{CT}} = ( (P_v),\iota(\beta) )$. The assumption that $\Sha(k,A)$ is finite implies that the pairing $\langle\,,\,\rangle_{\textup{CT}}$ is nondegenerate. In particular, if $V(k) \subset V(\A_k)^{\Br} = \emptyset$, then there is some $\calA \in \Bcyr$ such that $V(\A_k)^\calA = \emptyset$. It follows from bilinearity that, if there is such an obstruction, it will already come from the $\per(V)^{\infty}$-torsion elements in $\Bcyr(X)$.
    	\end{proof}

    \subsection{Unboundedness of the exponent}
        One might ask if we can restrict consideration to even smaller subgroups of $\Br V$, for instance if there is an integer $d$ that can be determined \textit{a priori} such that {the $d$-torsion (rather than the $d$-primary torsion) captures the Brauer-Manin obstruction.}
        The following proposition shows that this is not possible for torsors under abelian varieties, at least if Tate-Shafarevich groups of elliptic curves are finite.
	    
    	\begin{Proposition}\label{prop:WholePPrimary}
    		Suppose that Tate-Shafarevich groups of elliptic curves over number fields are finite. For any integers $P$ and $n$ there exists, over a number field $k$, a torsor $V$  under an elliptic curve with $\per(V) = P$,  $V(\A_k)^{(\Br V)[P^n]} \ne \emptyset$, and $V(\A_k)^{(\Br V)[P^{n+1}]} = \emptyset$.
    	\end{Proposition}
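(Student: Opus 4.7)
The plan is to construct $V$ using elements of $\Sha(k,E)$ with prescribed Cassels--Tate pairing and to read off the Brauer--Manin behavior through the homomorphism $\iota\colon \Sha(k,E)\to \Bcyr(V)$ from Proposition~\ref{prop:Bcyr}. First I would produce, over a suitable number field $k$, an elliptic curve $E$ whose (assumed finite) Tate--Shafarevich group has $P$-primary part $\Sha(k,E)[P^\infty] \cong (\Z/P^{n+1}\Z)^2$. Since any finite group carrying a nondegenerate alternating pairing valued in $\Q/\Z$ has this direct-sum shape, the Cassels--Tate pairing lets one choose generators $\beta_1,\beta_2$ of $\Sha(k,E)[P^\infty]$ with $\langle \beta_1,\beta_2\rangle_{\mathrm{CT}} = 1/P^{n+1}$. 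Set $[V]:=P^n\beta_1$ and let $V$ be the corresponding torsor, so that $\per(V)=P$.

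To analyse $\Br V$, note that $\Vbar$ is a smooth projective curve over $\kbar$, so $\Br\Vbar=0$ by Tsen's theorem and thus $\Br V=\Br_1 V$. Combining the Hochschild--Serre spectral sequence (using $H^3(k,\kbar^*)=0$) with the computation $H^1(k,\Pic\Vbar) = H^1(k,E)/\langle[V]\rangle$ (coming from $\NS\Vbar = \Z$ with trivial Galois action and from the fact that the boundary map sends a degree-one point class to $[V]$), one obtains an isomorphism $\Br V/\Br_0V \cong H^1(k,E)/\langle[V]\rangle$ under which $\Bcyr(V)/\Br_0V$ corresponds to $\Sha(k,E)/\langle[V]\rangle$ and $\iota$ becomes the quotient map.

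A short computation with the Cassels--Tate pairing now gives the required dichotomy. For $\alpha = a\beta_1+b\beta_2 \in \Sha(k,E)[P^\infty]$ we have $\langle[V],\alpha\rangle_{\mathrm{CT}} = b/P$, while $\iota(\alpha)$ has order dividing $P^n$ in $\Br V/\Br_0V$ precisely when $P\mid b$. Therefore $\iota(\beta_2)$ lies in $(\Br V)[P^{n+1}]$, has order exactly $P^{n+1}$, and evaluates to the nonzero constant $1/P$ on all of $V(\A_k)$ by the pairing identity of Proposition~\ref{prop:Bcyr}. This forces $V(\A_k)^{(\Br V)[P^{n+1}]}=\emptyset$. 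Conversely, every element of $\Bcyr(V)[P^n]$ pairs trivially with $V(\A_k)$, so the $\Bcyr$ part of $(\Br V)[P^n]$ poses no obstruction.

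It remains to rule out joint obstructions from classes in $(\Br V)[P^n]\setminus \Bcyr(V)$. I would combine Lemma~\ref{lem:compactness} (to reduce to a finite subgroup) with the Manin/Harari--Szamuely argument appearing in the proof of Proposition~\ref{prop:Bcyr}: any class $\alpha \in \Br V$ outside $\Bcyr(V)$ is locally nontrivial at some place $v$, so the adelic point there can be varied freely to absorb its contribution, and only the $\Bcyr$ part of a finite obstructing collection ultimately matters, which has just been shown to be trivial. The main obstacle is the first step: realizing an elliptic curve over a number field whose finite Tate--Shafarevich group has $P$-primary part exactly $(\Z/P^{n+1}\Z)^2$ is a delicate existence problem, which may require the base field $k$ to be chosen quite large.
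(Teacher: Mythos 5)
Your overall strategy---realize $V$ as a $P^n$-th multiple of a class in a finite Tate--Shafarevich group and detect the obstruction through the Cassels--Tate pairing via Proposition~\ref{prop:Bcyr}---matches the paper's, and your computation that a class $\beta_2$ pairing to $1/P$ with $[V]$ yields a constant nonzero evaluation on $V(\A_k)$, hence $V(\A_k)^{(\Br V)[P^{n+1}]}=\emptyset$, is essentially the paper's appeal to Manin's theorem. But there are two genuine gaps. The first is the existence input: you require an elliptic curve with $\Sha(k,E)[P^\infty]\cong(\Z/P^{n+1}\Z)^2$ and defer this as ``a delicate existence problem.'' That existence statement is precisely the nontrivial content that has to be supplied, and the paper supplies it (in the weaker, sufficient form of an element of order $P^{n+1}$) as Lemma~\ref{lem:ArbitraryOrderSha}: by Clark--Sharif one produces a torsor of period $P^{n+1}$ and index $P^{2(n+1)}$ failing local solubility at exactly two good finite places, and base change to a degree $P^{n+1}$ extension totally ramified at those places (Lang--Tate) makes it everywhere locally soluble while the period--index equality for locally trivial torsors forces its order in $\Sha$ to remain $P^{n+1}$. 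Without some such argument your construction has no starting point; moreover prescribing the \emph{entire} $P$-primary part of $\Sha$ is strictly stronger than what is needed and is not known to be achievable.

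The second gap is the claim that $V(\A_k)^{(\Br V)[P^n]}\ne\emptyset$. Your handling of classes in $(\Br V)[P^n]\setminus\Bcyr(V)$ (``the adelic point there can be varied freely to absorb its contribution'') is not a proof: one must exhibit a single adelic point simultaneously orthogonal to every element of a finite subgroup, and local non-constancy of each class separately does not produce one; nor is it true in general that only the $\Bcyr$ part of an obstructing collection matters. The paper's argument is different and clean: since $[V]=P^n[W]$, the torsor $W$ can be made into a $P^n$-covering $\pi\colon W\to V$, and descent theory shows that every point of $\pi(W(\A_k))$ (nonempty because $W\in\Sha(k,E)$) is orthogonal to $(\Br_1 V)[P^n]=(\Br V)[P^n]$, the equality holding by Tsen's theorem. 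Your setup actually has this covering available (take $W$ to be the torsor of class $\beta_1$), so this step is repairable, but as written it is missing.
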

        The following lemma will be helpful in the proof.
    	\begin{Lemma}\label{lem:ArbitraryOrderSha}
    		For any integer $N$ there exists an elliptic curve $E$ over a number field $k$ such that $\Sha(k,E)$ has an element of order $N$.
    	\end{Lemma}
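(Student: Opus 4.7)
The plan is to produce, over a suitable number field, an everywhere locally soluble genus-one curve of period exactly $N$; if $C$ is such a curve with Jacobian $E$, then $[C]\in\HH^1(k,E)$ lies in $\Sha(k,E)$ and has order equal to the period of $C$, namely $N$. This reduces the lemma to constructing a principal homogeneous space of an elliptic curve with points everywhere locally, no global point, and period exactly $N$.

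I would carry this out via $N$-descent. First, pass to a number field $k$ large enough that $\mu_N\subset k$ and that some elliptic curve $E/k$ satisfies $E[N]\subset E(k)$; taking $k=\Q(E[N])$ for any $E/\Q$ works, as $\mu_N\subset k$ then follows from the Weil pairing. Kummer theory gives $\HH^1(k,E[N])\cong (k^{\times}/k^{\times N})^2$, and the descent exact sequence reads
\[
0 \to E(k)/NE(k) \to \Sel^N(E/k) \to \Sha(k, E)[N] \to 0.
\]
I would then construct an element $\xi$ of $\Sel^N(E/k)$ by imposing local conditions at a finite set of auxiliary places---whose existence is guaranteed by a Chebotarev-type density argument---designed to force both that the associated $N$-covering $C_\xi\to E$ is everywhere locally soluble and that its class in $\Sha(k,E)$ has order exactly $N$ rather than some proper divisor. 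The latter can be ensured by prescribing at one carefully chosen place $v$ that the local restriction $\xi_v$ maps to an element of $\HH^1(k_v,E)$ of order exactly $N$.

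The main obstacle is controlling $E(k)/NE(k)$ so that the constructed Selmer class is not trivialized by a global rational point. My approach would be to work within a family of twists $\{E^d\}$ of $E$, performing the local construction for each $d$ and simultaneously invoking a rank-control result to find some $d$ with $E^d(k)$ of rank zero; the constructed Selmer class then survives to produce an element of $\Sha(k,E^d)$ of exact order $N$. The existence of such twists for arbitrary $N$ is essentially classical, originating with Cassels for small primes and extended to arbitrary $N$ in subsequent work (B\"olling, Matsuno, Clark); I would expect the author's proof to simply invoke one of these constructions directly rather than redo the descent from scratch.
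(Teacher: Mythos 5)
Your overall strategy (build an everywhere locally soluble genus-one curve of period exactly $N$ and take its class in $\Sha$) is the right starting point, but two steps in your descent-based construction do not work as described. First, the mechanism you propose for forcing the class to have order exactly $N$ --- ``prescribing at one place $v$ that the local restriction $\xi_v$ maps to an element of $\HH^1(k_v,E)$ of order exactly $N$'' --- is incoherent: by definition an element of $\Sel^N(E/k)$ restricts, at \emph{every} place, into the image of $E(k_v)/NE(k_v)$, i.e.\ to $0$ in $\HH^1(k_v,E)$. The order of the image of $\xi$ in $\Sha(k,E)$ is not detectable by local restrictions; one must show that $(N/p)\xi$ does not lie in the image of $E(k)/NE(k)$ for each prime $p\mid N$. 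And since you have arranged $E[N]\subset E(k)$, that image contains a copy of $(\Z/N)^2$ even when the Mordell--Weil rank is zero, so your rank-control step does not close this gap either. Second, the concluding appeal to ``essentially classical'' results is misleading: the classical large-$\Sha$ constructions (Cassels, B\"olling, Matsuno, \dots) concern specific small primes, and the existence over $\Q$ of elements of exact order $N$ for a general prime power $N$ is open --- the paper says as much in the remark following the lemma.

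The paper avoids Selmer groups and rank control entirely. By Clark--Sharif there is a torsor $V$ under some $E/k$ with period $N$ and \emph{index} $N^2$, which moreover fails to be locally soluble at exactly two finite places of good reduction prime to $N$. Passing to a degree-$N$ extension $L/k$ totally ramified at those two places makes $V_L$ everywhere locally soluble by Lang--Tate, so $V_L\in\Sha(L,E_L)$; the index can drop by at most a factor of $[L:k]=N$, hence remains $\geq N$, and since period equals index for locally trivial torsors (Cassels), the order of $V_L$ in $\Sha(L,E_L)$ is squeezed to exactly $N$. The key idea you are missing is that the period--index gap, which is stable under controlled base change, is what certifies the exact order --- not any local cohomological condition.
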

    	\begin{Remark}
    		It would be very interesting to know if the curve $E$ can be taken to be defined over $\Q$. To the best of our knowledge, this is unknown for $P$ a sufficiently large prime and for $P$ an arbitrary power of any single prime. It follows from the lemma that this does hold for abelian varieties over $\Q$, since restriction of scalars gives an element of order $N$ in $\Sha(\Q,\Res_{k/\Q}(E))$.
    	\end{Remark}
    	\begin{proof}
		{Recall that the index of a variety $X$ over a field is the gcd of the degrees of the closed points on $X$.} 
    		By work of Clark and Sharif \cite{ClarkSharif2010} we can find a torsor $V$ under an elliptic curve $E/k$ of period $N$ and index $N^2$.  Moreover, the proof of loc. cit. shows that we can find such $V$ with $V(k_v) = \emptyset$ for exactly two primes $v$ of $k$, both of which are finite, prime to $N$ and such that $E$ has good reduction. Let $L/k$ be any degree $N$ extension of $k$ which is totally ramified at both of these primes. By a result of Lang and Tate \cite{LangTate1958} we have that $V(L_w) \ne \emptyset$ for $w$ a place lying over either of these totally ramified primes, and hence $V_L \in \Sha(L,E_L)$. On the other hand, the index of $V_L$ can drop at most by a factor of $N = [L:k]$. Since $V_L$ is locally trivial its period and index are equal \cite[Theorem 1.3]{CasselsIV}. Therefore we have $N \le \textup{Index}(V_L) = \textup{Period}(V_L) \le \textup{Period}(V) = N$, so $V_L$ has order $N$ in $\Sha(L,E_L)$. 
    	\end{proof}

    	\begin{proof}[Proof of Proposition~\ref{prop:WholePPrimary}]
    		By the lemma above, we can find an elliptic curve $E$ such that $\Sha(k,E)$ contains an element of order $P^{n+1}$. Since $\Sha(k,E)$ is finite, we can find $W \in \Sha(k,E)$ such that $V := P^nW$ is not divisible by $P^{n+1}$ in $\Sha(k,E)$. Then a theorem of Manin~\cite{Manin1971} shows that $V(\A_k)^{(\Br V)[P^{n+1}]} = \emptyset$. On the other hand, $\pi \colon W \to V$ is a $P^n$-covering and, by descent theory, the adelic points in $\pi(W(\A_k))$ are orthogonal to $(\Br_1V)[P^n] = (\Br V)[P^n]$ (here we have used the fact that $V$ is a curve and applied Tsen's Theorem).
    	\end{proof}

\section{Quotients of torsors under abelian varieties}\label{sec:QAV}

	\begin{Theorem}\label{thm:QuotientOfPHS}
        Let $k$ be a number field.  Let $Y$ be a smooth projective variety and let $d$ be a positive integer such that for any subgroup $B\subset \Br Y$ we have
        \[ 
            Y(\A_k)^B = \emptyset \Rightarrow Y(\A_k)^{B[d^\infty]} = \emptyset.
        \]
        Let $\pi \colon Y \to X$ be a finite flat cover such that one of the following holds:
        \begin{enumerate}
            \item\label{case:1} $d = 2$, $\pi$ is a ramified double cover, and $(\Br X/\Br_0 X)[2^\infty]$ is finite, or
            \item\label{case:2} $\pi$ is a torsor under an abelian $k$-group of exponent dividing $d$.
        \end{enumerate}
        Then $X$ satisfies $\BMprimary{d}$.  
	\end{Theorem}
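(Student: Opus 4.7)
\emph{My plan is to argue by contradiction and bootstrap off the hypothesis on $Y$ via a twist/descent argument.} Suppose $(x_v) \in X(\A_k)^{\Br_d}$ with $\Br_d := (\Br X)[d^\infty]$; my goal is to force $X(\A_k)^{\Br X} \neq \emptyset$. In both cases, the core strategy is to produce a twist $\pi^\tau \colon Y^\tau \to X$ of $\pi$ together with an adelic lift $(y_v) \in Y^\tau(\A_k)$ of $(x_v)$, and then apply the hypothesis of the theorem to $Y^\tau$ with the subgroup $B := (\pi^\tau)^*\Br X \subset \Br Y^\tau$. Since $\pi^\tau$ sends $Y^\tau(\A_k)$ into $X(\A_k)$, the assumption $X(\A_k)^{\Br X} = \emptyset$ forces $Y^\tau(\A_k)^B = \emptyset$, whence the hypothesis gives $Y^\tau(\A_k)^{B[d^\infty]} = \emptyset$. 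Since pullback commutes with $d$-primary torsion, $B[d^\infty] = (\pi^\tau)^*\Br_d$, and functoriality of the Brauer--Manin pairing yields $(y_v) \perp (\pi^\tau)^*\Br_d$ (because $(x_v) \perp \Br_d$)---a contradiction. An implicit requirement is that the hypothesis on $Y$ transfer to each twist $Y^\tau$; in the intended applications all twists are torsors under a common abelian variety, so this is handled uniformly by Theorem~\ref{thm:PreciseMainPHS}.

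In case (2), the twists $\{\pi^\tau \colon Y^\tau \to X\}$ are classified by $\HH^1(k,G)$, and the adelic lift comes from standard descent theory for torsors under finite commutative group schemes: $(x_v) \in X(\A_k)$ lifts to some $Y^\tau(\A_k)$ precisely when $(x_v)$ is orthogonal to the image of the type map $\HH^1(k,G^D) \to \Br X$ obtained via cup product. Since $G^D$ is Cartier dual to $G$ and therefore also has exponent dividing $d$, that image sits inside $\Br_d$, so the hypothesis $(x_v) \in X(\A_k)^{\Br_d}$ alone suffices to furnish the lift.

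Case (1) follows the same blueprint but the descent step is more delicate because the twists of a ramified double cover are parameterized by the infinite group $k^\times/k^{\times 2}$. Away from the branch locus $R$, $\pi$ is \'etale with group $\Z/2$, and the defining section of $\pi$ attaches to a generic adelic point $(x_v)$ a tuple of local classes in $\prod_v k_v^\times/k_v^{\times 2}$. The obstruction to patching these into a single global $t \in k^\times/k^{\times 2}$ lies, via Poitou--Tate duality for $\mu_2$, in a group that pairs with $2$-primary elements of $\Br X$; my plan is to identify the obstruction as evaluation against a canonical set of classes in $(\Br X/\Br_0 X)[2^\infty]$, whose finiteness is guaranteed by hypothesis. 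Once $(x_v) \in X(\A_k)^{\Br_2}$ forces all these finitely many evaluations to vanish, the lift $(y_v) \in Y^\tau(\A_k)$ exists and the finishing argument from case (2) applies verbatim. The main obstacle I anticipate is (a) executing this Poitou--Tate identification cleanly in the presence of ramification---including handling adelic components that meet $R$, where a local approximation/moving argument will be required---and (b) verifying that the hypothesis on $Y$ transfers to each $Y^\tau$, which in the intended applications holds because all such twists are torsors under the same abelian variety.
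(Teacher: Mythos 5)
Your overall architecture --- find a twist $\pi^\tau\colon Y^\tau\to X$ carrying an adelic lift of $(x_v)$, apply the hypothesis on the cover with $B=(\pi^\tau)^*\Br X$, and push back down by functoriality --- is exactly the paper's, and your case (2) is correct: the descent classes attached to a torsor under a finite abelian $k$-group $G$ are cup products with $\HH^1(k,\widehat{G})$, hence killed by the exponent of $G$, which divides $d$; so orthogonality to $(\Br X)[d^\infty]$ alone produces an \emph{exact} adelic lift to some twist and the functoriality step goes through. You are also right to flag that the hypothesis must be applied to $Y^\tau$ rather than to $Y$ itself; the paper's proof does this silently, and in the applications it is harmless because every twist is again (birational to) a torsor under an abelian variety.

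The gap is in case (1). You propose to patch the local square classes $f(x_v)\in k_v^\times/k_v^{\times 2}$ into a single global $t$, obtain an adelic point of $Y^t$ lifting $(x_v)$ at \emph{every} place, and conclude by functoriality as in case (2). Such exact patching is impossible in general: the tuple $(f(x_v))_v$ can be a nonsquare unit at infinitely many places independently, and no Brauer--Manin condition forces it into the image of $k^\times/k^{\times 2}$ (take $X$ with $\Br X=\Br_0 X$). The correct statement (the paper's Proposition~\ref{prop:SSD}, following Skorobogatov and Swinnerton-Dyer) is $S$-truncated: one finds $a\in k^\times$ with $a/f(x_v)\in k_v^{\times 2}$ only for $v$ in a fixed finite set $S$, plus local solubility of $Y^a$ outside $S$. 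The resulting adelic point on $Y^a$ lifts $(x_v)$ (after perturbing off the branch locus) only at places of $S$, so your step ``functoriality yields $(y_v)\perp(\pi^\tau)^*\Br_d$'' fails at $v\notin S$. This is exactly where the finiteness of $(\Br X/\Br_0 X)[2^\infty]$ is needed --- not, as you suggest, to make the set of descent classes finite (the classes $(c,f)$ with $c\in\ker(k^\times/k^{\times 2}\to L^\times/L^{\times 2})$ form a finite set modulo constants for elementary reasons), but to choose $S$ once and for all so that every class in $(\Br X)[2^\infty]$ evaluates trivially at \emph{all} local points outside $S$; orthogonality of the lifted point to $(\pi^a)^*\bigl((\Br X)[2^\infty]\bigr)$ is then verified by a direct sum-of-invariants computation rather than deduced from an exact lift. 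Your Poitou--Tate heuristic identifies the right obstruction classes, but the lifting statement you need is this weaker one, and the finishing argument must be adjusted accordingly.
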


    \begin{Remark}
        In fact, a stronger result holds.  There is a subgroup $B\subset (\Br X)[d]$ such that if $X(\A_k)^B\ne \emptyset$, then $X$ is also $\BMcoprime{d}$.  If we are in Case~\eqref{case:1}, then this subgroup $B$ is finite, depends only on the Galois action on the geometric irreducible components of the branch locus of $\pi$, and is generically trivial.
        
        The idea is that the subgroup $B$ controls whether there is a twist of $Y$ over $X$ that is everywhere locally soluble.  If there is such a twist, then we may apply Lemma~\ref{lem:pushpull}\eqref{case:push} and Theorem~\ref{thm:PreciseMainPHS} {to conclude that $X$ inherits $\BMcoprime{d}$ from the covering.}
    \end{Remark}
    \begin{Remark}\label{rmk:PHSSatisfyAssumption}
        Theorem~\ref{thm:PreciseMainPHS} implies that the first hypothesis of Theorem~\ref{thm:QuotientOfPHS} is satisfied when $Y$ is birational to a torsor $V$ under an abelian variety and $d$ is an integer that is divisible by every prime in $\Supp(\per(V))$. 
    \end{Remark}
    
    For the proof, we need a slight generalization of a result of Skorobogatov and Swinnerton-Dyer~\cite[Theorem 3 and Lemma 6]{SSD-2Descent}.
    \begin{Proposition}\label{prop:SSD}
        Let $\pi\colon Y \to X$ be a ramified double cover over $k$ and assume that $\left(\Br X/\Br_0 X\right)[2^{\infty}]$ is finite.  Then
        \[
            X(\A_k)^{\Br_2} \ne\emptyset 
            \quad\Longleftrightarrow \quad
            \bigcup_{a\in k^{\times}/k^{\times2}} \pi^{a}\left(Y^{a}(\A_k)^{(\pi^{a})^*(\Br X[2^{\infty}])}\right)\ne\emptyset,
        \]
        where $\pi^a \colon Y^a \to X$ denotes quadratic twist of $Y\to X$ by $a$.
    \end{Proposition}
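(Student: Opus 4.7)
The implication $(\Leftarrow)$ is immediate from functoriality of the Brauer--Manin pairing: for $(Q_v) \in Y^a(\A_k)^{(\pi^a)^*(\Br X)[2^\infty]}$, the pushforward $(P_v) := \pi^a(Q_v)$ satisfies $((P_v),\alpha) = ((Q_v),(\pi^a)^*\alpha) = 0$ for every $\alpha \in (\Br X)[2^\infty]$, so $(P_v) \in X(\A_k)^{\Br_2}$.

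For $(\Rightarrow)$, the plan is to adapt the descent argument of Skorobogatov and Swinnerton-Dyer \cite[Theorem 3 and Lemma 6]{SSD-2Descent} from the 2-torsion setting to the full 2-primary subgroup. Beginning with $(P_v) \in X(\A_k)^{\Br_2}$, I would first perturb each $P_v$ slightly so that it lies off the branch locus of $\pi$ while preserving membership in $X(\A_k)^{\Br_2}$; the assumption that $(\Br X/\Br_0 X)[2^\infty]$ is finite enters here, reducing the orthogonality condition to a finite set of locally constant constraints and thus making the simultaneous perturbation feasible. The ramified double cover $\pi$ is classified by a class $[\pi] \in \HH^1(X,\mu_2)$; its evaluation at $P_v$ produces $c_v \in k_v^\times/k_v^{\times 2}$, and the twisted cover $\pi^a \colon Y^a \to X$ admits a $k_v$-point above $P_v$ precisely when $a \cdot c_v$ is a square in $k_v^\times$. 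Once a global $a \in k^\times/k^{\times 2}$ with $a \cdot c_v \in k_v^{\times 2}$ for every $v$ is produced, any adelic lift $(Q_v) \in Y^a(\A_k)$ of $(P_v)$ is automatically orthogonal to $(\pi^a)^*(\Br X)[2^\infty]$ by the same functoriality computation used in $(\Leftarrow)$.

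Producing $a$ is a Poitou--Tate reciprocity problem: $(c_v)$ lifts from $\HH^1(k,\mu_2)$ iff $\sum_v (c_v,b)_v = 0$ for every $b \in k^\times/k^{\times 2}$, where the local Hilbert symbol $(c_v,b)_v$ equals the evaluation at $P_v$ of the cup product $[\pi] \cup b \in (\Br X)[2]$. Since $(\Br X)[2] \subset (\Br X)[2^\infty]$, the hypothesis that $(P_v)$ is orthogonal to $\Br_2 X$ forces this reciprocity condition to hold, so $a$ exists. The main obstacle is the preliminary perturbation: without finiteness of $(\Br X/\Br_0 X)[2^\infty]$, one cannot guarantee that a small adjustment of the $P_v$ avoiding the branch locus remains orthogonal to the entire 2-primary subgroup. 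The finiteness assumption, together with the local constancy of Brauer evaluation, lets us work with a finite set of generators and thus bridges the gap between SSD's original 2-torsion statement and the 2-primary version required here.
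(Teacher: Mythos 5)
Your backwards direction and your overall strategy (perturb off the branch locus, then look for a global square class matching the local evaluations of the defining function) follow the general shape of the paper's argument, but the key step --- producing the global twist parameter $a$ --- contains a genuine gap. You assert that the reciprocity condition $\sum_v (c_v,b)_v = 0$ holds for \emph{every} $b \in k^\times/k^{\times 2}$ because the cup product $[\pi]\cup b$ lies in $(\Br X)[2]$ and $(P_v)$ is orthogonal to $(\Br X)[2^\infty]$. But $\pi$ is a \emph{ramified} double cover: it is a $\mu_2$-torsor only over the complement of the branch locus, so $[\pi]\cup b = (f,b)_2$ (where $\kk(Y)=\kk(X)(\sqrt{f})$) is a priori only a class in $\Br \kk(X)$. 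It extends to an element of $\Br X$ precisely when its residues vanish along the divisors $D$ with $v_D(f)$ odd, which by \cite[Theorem 3]{SSD-2Descent} happens exactly for $b$ in the kernel of $k^\times/k^{\times 2}\to L^\times/L^{\times 2}$, where $L$ is built from the residue fields of those divisors. For all other $b$ the class $(f,b)_2$ is ramified on $X$, the orthogonality hypothesis says nothing about it, and the corresponding reciprocity sum need not vanish. Consequently $(c_v)$ does \emph{not} in general lift to a global square class, and the twist you want --- one with local points above the given $P_v$ at \emph{every} place --- need not exist.

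This is exactly why the paper does not attempt a full Poitou--Tate lift. It verifies the reciprocity sum only for $c\in\ker\bigl(k^\times/k^{\times2}\to L^\times/L^{\times2}\bigr)$ and then invokes \cite[Lemma 6(ii)]{SSD-2Descent}, which from that weaker information produces an $a$ agreeing with the local classes $a_v=f(Q_v)$ only at the places of a suitable finite set $S$, while still guaranteeing $Y^a(\A_k)\neq\emptyset$ (via local points lying over possibly \emph{different} $k_v$-points of $X$ outside $S$). This suffices because $S$ is chosen so that the finitely many representatives of $(\Br X/\Br_0X)[2^\infty]$ evaluate trivially at every $k_v$-point for $v\notin S$; your finiteness-based perturbation step is correct and is used for the same purpose, but to close the argument you must replace the claim that $(c_v)$ lifts globally with this weaker matching-at-$S$ statement, which is precisely the content of SSD's Lemma 6.
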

    \begin{proof}
        The backwards direction follows from the functoriality of the Brauer group.  Thus we consider the forwards direction.  This proof follows ideas from~\cite[Section 5]{SSD-2Descent}.  We repeat the details here for the reader's convenience.
        
        Let $f\in \kk(X)^{\times}$ be such that $\kk(Y) = \kk(X)(\sqrt{f})$.  We define a finite dimensional $k$-algebra 
        \[
            L := \bigoplus_{\substack{D\in X^{(1)}\\ v_D(f) \textup{ odd}}}\left(\kbar \cap \kk(D)\right).
        \]
        Note that $L$ is independent of the choice of $f$, since the class of $f$ in $\kk(X)^{\times}/\kk(X)^{\times2}$ is unique.  Let $\alpha_1, \dots, \alpha_n \in (\Br X)[2^{\infty}]$ be representatives for the finitely many classes in $(\Br X/\Br_0 X)[2^\infty]$.  Let $S$ be a finite set of places such that for all $v\notin S$, for all $P_v\in X(k_v)$, and for all $1\leq i \leq n$, $\alpha_i(P_v) = 0\in \Br k_v$. (It is well known that finding such a finite set $S$ is possible, see, e.g.,~\cite[Section 5.2]{TorsorsAndRationalPoints}.)  After possibly enlarging $S$ we may assume that $S$ contains all archimedean places and all places that are ramified in a subfield of $L$. We may also assume that $Y^a(k_v)\neq \emptyset$ for all $v\notin S$ and all $a\in k^{\times}/k^{\times2}$ with $v(a)$ even~\cite[Prop. 5.3.2]{TorsorsAndRationalPoints}.
        
        Let $(P_v)\in X(\A_k)^{\Br_2}$.  For $v\in S$, let $Q_v\in X(k_v)$ be such that $a_v:=f(Q_v)\in k_v^{\times}$ and be sufficiently close to $P_v$ so that $\alpha_i(P_v) = \alpha_i(Q_v)$ for all $i$.  For $v\notin S$, set $a_v :=1 $.  Let $c\in k^{\times}$ be such that the class of $c$ lies in the kernel of the natural map $k^\times/k^{\times 2}\to L^\times/L^{\times 2}$. Then by~\cite[Theorem 3]{SSD-2Descent}, the quaternion algebra $\calA = (c,f)_2$ lies in $(\Br X)[2]$.  Using the aforementioned properties of $S$ and the definition of $P_v$ and $a_v$, we then conclude
		\[
			\sum_{v\in \Omega_k} \textup{inv}_v((c, a_v)) =
			\sum_{v\in S}        \textup{inv}_v((c, a_v)) =
			\sum_{v\in S}         \textup{inv}_v(\calA(P_v)) =
			\sum_{v\in\Omega_k} \textup{inv}_v(\calA(P_v)) = 0.
		\]
        Hence, by~\cite[Lemma 6(ii)]{SSD-2Descent}, there exists an $a\in k^{\times}$ with $a/a_v\in k_v^{\times 2}$ such that $Y^a(\A_k) \neq \emptyset.$  Since $a/a_v\in k_v^{\times 2}$ for all $v\in S$ we may further assume that there exists an $(R_v) \in Y^a(\A_k)$ such that $\pi^a(R_v) = Q_v$ for all $v\in S$.  We then have
        \begin{align*}
            \sum_{v}\textup{inv}_v(((\pi^a)^*(\alpha_i))(R_v)) & = 
			\sum_v\textup{inv}_v(\alpha_i(\pi^a(R_v)))
            =  \sum_{v\in S}\textup{inv}_v(\alpha_i(Q_v)) = 
			\sum_{v\in S}\textup{inv}_v(\alpha_i(P_v))\\&
            = \sum_{v}\textup{inv}_v(\alpha_i(P_v))=0,
        \end{align*}
        so $(R_v) \in Y^a(\A_k)^{(\pi^a)^*(\Br X[2^\infty])}$.
    \end{proof}

	\begin{proof}[Proof of Theorem~\ref{thm:QuotientOfPHS}]
        If $\pi$ is ramified, let $G = \Z/2$; otherwise let $G$ be the finite $k$-group such that $\pi$ is a torsor under $G$.
		    For each $\tau\in \HH^1(k, G)$, let $\pi^{\tau}\colon Y^{\tau}\to X$ denote the twisted cover, and define $B^{\tau} := ({\pi^{\tau}})^*(\Br X)$.
        
        Assume that $X(\A_k)^{\Br_d} \neq \emptyset$.  By~\cite[Prop. 1.1]{ISZ}, ${(\pi^{\tau})}^*((\Br X)[d^{\infty}]) = B^{\tau}[d^\infty]$. Therefore, Proposition~\ref{prop:SSD} and descent theory show in cases~\eqref{case:1} and~\eqref{case:2} respectively, that there exists a $\tau\in \HH^1(k, G)$ such that $Y^{\tau}(\A_k)^{B^{\tau}[d^\infty]} \neq\emptyset$. Applying the assumption 
         with $B=B^\tau$ we conclude that $Y^{\tau}(\A_k)^{B^{\tau}}$ is nonempty. By the functoriality of the Brauer-Manin pairing, we have that $X(\A_k)^{\Br} \neq \emptyset.$
	\end{proof}

    \subsection{Bielliptic Surfaces}\label{sec:Bielliptic}

        We say that a nice variety $X$ over $K$ is a \defi{bielliptic surface} if $X$ is a minimal algebraic surface of Kodaira dimension $0$ and irregularity $1$. 
    
        \subsubsection{Geometry of bielliptic surfaces}
    
            If $X$ is a bielliptic surface over $K$ then it is well known that $\Xbar$ is isomorphic to $(A\times B)/G$ where $A$ and $B$ are elliptic curves and $G$ is a finite abelian group acting faithfully on $A$ and $B$ such that $A/G$ is an elliptic curve and $B/G\isom \PP^1$ (see, e.g.,~\cite[Chap. VI and VIII]{Beauville-ComplexSurfaces}).  Furthermore, by the Bagnera-de Franchis classification~\cite[List VI.20]{Beauville-ComplexSurfaces}, the pair of $\gamma := |G|$ and $n:= \textup{exponent}(G)$ must be one of the following:
        	\[
        	(\gamma,n) \in \{ (2,2), (4,2), (4,4), (8,4), (3,3), (9,3), (6,6) \}\,.
        	\]
        	In all cases $n$ is the order of the canonical sheaf in $\Pic X$.

        	For $A,B,G$ as above, the universal property of the Albanese variety and~\cite[Ex IX.7(1)]{Beauville-ComplexSurfaces} imply that  the natural $k$-morphism $\Psi\colon X \to \Alb^1_X$ geometrically agrees with the projection map $(A\times B)/G \to A/G$.

            \begin{Lemma}\label{lem:BiellipticDivisors}
                Let $A$ and $B$ be elliptic curves over $K$ and let $G$ be a finite group acting faithfully on $A$ and $B$ such that $A/G$ is an elliptic curve and $B/G\isom \PP^1$.  Set $X := (A\times B)/G$ and let $\pi$ denote the projection map $X \to B/G$. If $\calL\in \Pic X$ is such that $\calL\sim_{\textup{alg}} \pi^*(\OO_{\PP^1}(m))$ for some positive integer $m$ and $\HH^0(X, \calL) \neq 0$, then $\calL \simeq \pi^*(\OO_{\PP^1}(m))$.
            \end{Lemma}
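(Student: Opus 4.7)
The plan is to pass to the \'etale Galois cover $p\colon A\times B \to X$ of group $G$ and to analyze $H^0(X,\calL)$ via K\"unneth on the product. Since $\calL \sim_{\textup{alg}} \pi^*\OO_{\PP^1}(m)$, the line bundle $\calN := \calL \otimes \pi^*\OO_{\PP^1}(-m)$ lies in $\Pic^0(X)$. Using that $\Alb^0_X$ is geometrically $A/G$, Albanese duality gives an isomorphism $\Psi^*\colon \Pic^0(A/G) \stackrel{\sim}{\to} \Pic^0(X)$, so $\calN = \Psi^*\calM$ for some $\calM \in \Pic^0(A/G)$. Letting $q_A, q_B$ denote the projections out of $A\times B$ and $\pi_A\colon A \to A/G$, $\pi_B\colon B \to B/G$ the quotients, the compatibilities $\pi\circ p = \pi_B\circ q_B$ and $\Psi\circ p = \pi_A\circ q_A$ yield
\[
 p^*\calL \;=\; q_A^*\pi_A^*\calM \,\otimes\, q_B^*\pi_B^*\OO_{\PP^1}(m).
\]

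Next, since $p$ is \'etale Galois and $\Char(K)=0$, we have $H^0(X, \calL) = H^0(A\times B, p^*\calL)^G$, and by K\"unneth this equals $H^0(A, \pi_A^*\calM) \otimes H^0(B, \pi_B^*\OO_{\PP^1}(m))$. The hypothesis $H^0(X,\calL) \neq 0$ thus forces both tensor factors to be nonzero. Because $\pi_A^*\calM$ has degree zero on the elliptic curve $A$, the nonvanishing of $H^0(A, \pi_A^*\calM)$ forces $\pi_A^*\calM \simeq \OO_A$.

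The remaining and main obstacle is to upgrade $\pi_A^*\calM\simeq \OO_A$ to $\calM \simeq \OO_{A/G}$: the isogeny $\pi_A^*\colon \Pic^0(A/G) \to \Pic^0(A)$ has finite kernel isomorphic to the Cartier dual $G^\vee$, so $\pi_A^*\calM\simeq \OO_A$ only forces $\calM$ into this finite kernel of characters of $G$. To rule out the nontrivial kernel elements, one exploits the $G$-equivariance of the chosen section: a nontrivial $\chi \in \ker\pi_A^*$ twists the natural $G$-action on $q_B^*\pi_B^*\OO_{\PP^1}(m)$ by $\chi$, and one has to verify that the $\chi^{-1}$-isotypic component of $H^0(B, \pi_B^*\OO_{\PP^1}(m))$ as a $G$-representation vanishes for every nontrivial $\chi$. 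This last step is a concrete representation-theoretic calculation on $B$ that is controlled by the explicit Bagnera--De Franchis description of the $G$-action on $B$. Once $\calM \simeq \OO_{A/G}$ is established, we get $\calN \simeq \OO_X$ and hence $\calL \simeq \pi^*\OO_{\PP^1}(m)$.
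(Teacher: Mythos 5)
Your reduction through the identification $\pi_A^*\calM\isom\OO_A$ is sound and in fact more transparent than the paper's own argument (which pushes forward to $\PP^1$ and invokes the projection formula instead of pulling back to $A\times B$): you correctly locate the crux, namely that one only learns $\calM\in\ker\bigl(\pi_A^*\colon\Pic^0(A/G)\to\Pic^0(A)\bigr)\isom\Hom(G,\Kbar^\times)$, and the nontrivial characters must still be excluded. But the proposal stops exactly there: the ``concrete representation-theoretic calculation'' is asserted, not performed, and it is the entire content of the lemma. Worse, when one performs it the required vanishing fails. Writing $(\pi_B)_*\OO_B=\bigoplus_\chi L_\chi$ for the isotypic decomposition, the $\chi$-component of $H^0(B,\pi_B^*\OO_{\PP^1}(m))$ is $H^0(\PP^1,\OO_{\PP^1}(m)\otimes L_\chi)$; since $\sum_{\chi\neq 1}\deg L_\chi=-|G|$ and each $\deg L_\chi\le -1$, every nontrivial $L_\chi$ has degree $-1$ or $-2$, so every such component is nonzero once $m\ge 2$ (and all but one already for $m=1$ when $|G|\ge 3$). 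Concretely, for $G=\Z/2$ acting on $B\colon y^2=f(x)$ by $[-1]$ and on $A$ by translation by $\tau\in A[2]$, the anti-invariant section $y\in H^0(B,\pi_B^*\OO_{\PP^1}(2))$ descends to a section of $\calL:=\pi^*\OO_{\PP^1}(2)\otimes\omega_X$ whose divisor is the sum of the four reduced half-fibers of $\pi$. Here $\omega_X$ is precisely $\pi_2^*$ of the nontrivial element of $\ker\pi_A^*$ (the kernel of $p^*\colon\Pic X\to\Pic(A\times B)$ has order $|G|=2$ and already contains the order-$2$ subgroup $\pi_2^*(\ker\pi_A^*)\subset\Pic^0X$, as well as $\omega_X$), so $\calL\sim_{\textup{alg}}\pi^*\OO_{\PP^1}(2)$ and $H^0(X,\calL)\neq 0$, yet $\calL\not\isom\pi^*\OO_{\PP^1}(2)$.

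So the deferred step is not a routine verification; it is the point on which the statement turns, and your own framework shows it cannot be closed for general $m$ and $\calM$. (For comparison, the paper's proof buries the same issue in the inference that $\HH^0(\PP^1,\OO(m)\otimes\pi_*\pi_2^*\calL')\neq 0$ forces $\HH^0(\PP^1,\pi_*\pi_2^*\calL')\neq 0$; since $\pi_*\pi_2^*\calL'\isom L_\chi$ has negative degree for nontrivial $\calL'\in\ker\pi_A^*$, that is the same unjustified vanishing. The downstream application survives only because the line bundles arising there are genuine pullbacks of divisors on $B/G$.) A minor additional slip: $H^0(A\times B,p^*\calL)^G$ is the $G$-invariant subspace of the K\"unneth product, not the whole product; this does not affect your deduction that both K\"unneth factors are nonzero, since $H^0(X,\calL)$ injects into $H^0(A\times B,p^*\calL)$.
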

            \begin{proof}
                Since $\calL\sim_{\textup{alg}} \pi^*(\OO_{\PP^1}(m))$ and $\Pic^0_X\isom A/G\isom\Pic^0_{A/G}$, there exists a line bundle $\calL'\in \Pic^0(A/G)$ such that $\calL \isom \pi^*(\OO_{\PP^1}(m))\otimes\pi_2^*(\calL')$, where $\pi_2$ is the projection $X\to A/G$.  By assumption $\HH^0(X, \calL) \neq 0$, so $\HH^0(\PP^1, \pi_*\calL)\neq 0$, which, by the projection formula, implies that $\HH^0(\PP^1,\pi_*\pi_2^*\calL')$, and hence $\HH^0(A/G,\calL')$, are nonzero.  We complete the proof by observing that $\HH^0(A/G,\calL') \neq 0$ if and only if $\calL' \isom \OO_{A/G}$.
            \end{proof}
    
        	\begin{Proposition}\label{prop:fibration}
        		Let $X$ be a bielliptic surface over $K$.  Then there exists a genus $0$ curve $C$ and a $K$-morphism $\Phi\colon X\to C$ that is geometrically isomorphic to the projection map  $(A\times B)/G \to B/G$.
        	\end{Proposition}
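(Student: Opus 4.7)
The plan is to construct $\Phi$ by Galois descent of the geometric fibration $\bar\pi\colon \bar X \to B/G \simeq \PP^1_{\bar K}$ that comes from the description $\bar X \simeq (A\times B)/G$. The essential geometric input is the following uniqueness statement: up to post-composition with an element of $\Aut_{\bar K}(\PP^1) = \PGL_2(\bar K)$, the morphism $\bar\pi$ is the only surjection from $\bar X$ onto a smooth curve other than the Albanese map $\Psi_{\bar K}\colon \bar X \to A/G$.

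First I would establish this uniqueness. Fibrations of $\bar X$ onto smooth curves (surjections with connected fibers) are in bijection with primitive nef isotropic classes in $\NS \bar X$. Working with the Bagnera--de Franchis description of $\bar X$, one checks that $\NS \bar X$ has rank $2$ (the non-translation part of the action of $G$ on $B$ kills any potential contribution from $\Hom(A,B)^G$), so the intersection form is hyperbolic and there are exactly two primitive isotropic classes, represented by the fibers of the two projections $(A\times B)/G \to A/G$ and $(A\times B)/G \to B/G$ (cf.\ \cite[Ch.~VI and VIII]{Beauville-ComplexSurfaces}). Since the bases of these two fibrations have different genera, no symmetry of $\bar X$ can exchange them.

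Given the uniqueness, the Galois descent is routine. For each $\sigma \in \Gamma_K$, the Galois-conjugate morphism ${}^{\sigma}\bar\pi\colon \bar X \to \PP^1_{\bar K}$ is again a surjection onto a rational curve, distinct from the Galois-invariant Albanese, so by uniqueness there exists a unique $\phi_\sigma \in \PGL_2(\bar K)$ with ${}^{\sigma}\bar\pi = \phi_\sigma \circ \bar\pi$. A short verification shows that $\sigma \mapsto \phi_\sigma$ is a $1$-cocycle; the associated class in $\HH^1(\Gamma_K, \PGL_2)$ represents a smooth projective geometrically integral $K$-curve $C$ of genus $0$ (a twist of $\PP^1$), and Galois descent then produces the desired $K$-morphism $\Phi\colon X \to C$, which by construction is geometrically isomorphic to $\bar\pi$.

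The main obstacle is making the uniqueness of the non-Albanese fibration fully rigorous; once it is granted, the rest is formal. A second, possibly cleaner route would be to find a $K$-rational line bundle $\calL$ on $X$ with $\calL_{\bar K} \sim_{\textup{alg}} \bar\pi^*\OO_{\PP^1}(m)$ for some $m>0$ and $\HH^0(X,\calL)\neq 0$: by Lemma~\ref{lem:BiellipticDivisors}, $\calL_{\bar K} \simeq \bar\pi^*\OO_{\PP^1}(m)$, and then the Stein factorization of the morphism associated to $|\calL|$ provides $\Phi$ directly; the difficulty in that approach is exhibiting such a $K$-rational $\calL$ in the presence of potential Brauer-type obstructions to descending line bundle classes from $(\Pic \bar X)^{\Gamma_K}$ to $\Pic X$.
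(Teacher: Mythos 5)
Your first route is correct and is genuinely different from the paper's argument. The paper does not descend the fibration as a $\PGL_2$-torsor; instead it descends a divisor class: it averages an ample divisor over Galois to get a $K$-rational class $\alpha A+\beta B$ in $\Num\Xbar$, subtracts off $K$-rational Albanese fibers (which represent $mB$) to isolate a Galois-invariant numerical class $m'A$, then uses Lemma~\ref{lem:BiellipticDivisors} applied to the Galois conjugates of $\OO_{\Xbar}(m'A)$ to upgrade invariance in $\NS\Xbar$ to invariance in $\Pic\Xbar$ (the point being that these conjugates are algebraically equivalent effective classes pulled back from $\PP^1$, so effectivity pins down the isomorphism class). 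The ``Brauer-type obstruction'' you flag in your second route is exactly what the paper then confronts, and it is dispatched cheaply: the cokernel of $\Pic X\to(\Pic\Xbar)^{\Gamma_K}$ injects into $\Br K$, which is torsion, so a further multiple of the class is represented by a $K$-rational divisor $D_0$, and $\phi_{|D_0|}$ is the desired map onto a genus~$0$ curve. So your second sketch is essentially the paper's proof with its one missing step filled in this way. Your cocycle argument buys independence from Lemma~\ref{lem:BiellipticDivisors} and from the Hochschild--Serre step, at the price of needing $\rho(\Xbar)=2$ and the classification of isotropic nef rays to get uniqueness of the genus-$0$ fibration; both arguments correctly allow $C$ to be a nontrivial conic. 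Two small points of care in your version: the uniqueness must be stated for surjections with \emph{connected} fibers (as you do parenthetically --- arbitrary surjections onto $\PP^1$ are not unique up to $\PGL_2$), and one should note either that Galois fixes the Albanese fiber class and hence both isotropic rays, or (as you do) that the conjugate fibration has genus-$0$ base and so cannot lie in the Albanese ray.
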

            \begin{proof}
                By the geometric classification, there exist smooth elliptic curves $A$ and $B$ over $\Kbar$ such that $\Xbar\isom (A\times B)/G$.  By abuse of notation, we will also use $A$ and $B$ to refer to the algebraic equivalence class of a smooth fiber of the projection maps $\Xbar \to B/G$ and $\Xbar \to A/G$, respectively.
                
                Since $\Xbar$ has an ample divisor, the sum of the Galois conjugates of this divisor is a $K$-rational ample divisor $D$. Then by~\cite[Lemma 1.3 and Table 2]{Serrano1990}, $[D] \equiv \alpha A + \beta B \in \Num \Xbar$ for some positive $\alpha, \beta \in \frac1n\Z$.  Since the natural map $X\to{\Alb^1_X}$ is $K$-rational, taking the fiber above a closed point yields a $K$-rational divisor $F$ representing $mB\in \Num \Xbar$ for some $m>0$.   By taking a suitable integral combination of $D$ and $F$, we obtain a $K$-rational divisor $D'$ that is algebraically equivalent to $m'A$ for some positive integer $m'$.  Hence, $m'A\in (\NS \Xbar)^{\Gal(\Kbar/K)}$.  
        
		{Applying Lemma~\ref{lem:BiellipticDivisors} to the Galois conjugates of $\OO_\Xbar(m'A)$ we see that  $\OO_\Xbar(m'A)\in (\Pic \Xbar)^{\Gamma_K}$.}
		{By the Hochschild-Serre spectral sequence the cokernel of $\Pic X \to (\Pic \Xbar)^{\Gamma_K}$ injects into $\Br K$, which is torsion. Therefore some multiple of $A$ is represented by a $K$-rational divisor $D_0$.  }
                
                Since $A$ is the class of the pull back of $\OO_{B/G}(1)$ under the projection map $(A\times B)/G\to B/G)$, it follows that $\phi_{|D_0|}$ is geometrically isomorphic to the projection map $(A\times B)/G \to B/G$ composed with an $r$-uple embedding.  Thus, the image of $\phi_{|D_0|}$ is a $K$-rational genus $0$ curve $C$ and $\phi_{|D_0|}$ is the desired map. 
        	\end{proof}
	
        \subsubsection{Proof of Theorem~\ref{thm:MainBielliptic}}
        \label{sec:ProofBielliptic}
	
        	\begin{Lemma}\label{lem:bielliptic}
        		Let $X$ be a bielliptic surface of period $d$ over a number field $k$. Then $X$ satisfies $\BMprimary{nd}$ where $n$ denotes the order of the canonical sheaf in $\Pic X$.
        	\end{Lemma}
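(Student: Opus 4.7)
The plan is to apply Theorem~\ref{thm:QuotientOfPHS}~\eqref{case:2} to an étale cover $\pi\colon Y\to X$ whose structure group has exponent dividing $nd$. The cover is obtained by pulling back an $[nd]$-isogeny of the Albanese. Since $\Alb^1_X$ has period $d$ (which divides $nd$), the canonical surjection $\pi_1^{\textup{\'et}}(\Alb^1_{X,\kbar})\twoheadrightarrow \Alb^0_X[nd](\kbar)$ is Galois-equivariant, and so descends to a $k$-rational connected étale $\Alb^0_X[nd]$-cover $T\to \Alb^1_X$ which is geometrically the $[nd]$-isogeny $A/G\to A/G$. I would define $Y := X\times_{\Alb^1_X} T$. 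Then $\pi\colon Y\to X$ is a $\Alb^0_X[nd]$-torsor, and $\Alb^0_X[nd]$ is an abelian $k$-group scheme of exponent dividing $nd$.

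Next I would verify that $Y$ is birational to a $k$-torsor under an abelian variety. Writing $X_{\kbar}\cong (A\times B)/G$, with the Albanese map induced by the projection onto the first factor (so $G$ acts on $A$ by translations through the subgroup $G'\subset A$), a direct computation of $Y_{\kbar} = X_{\kbar}\times_{A/G,[nd]} A/G$ shows that two points $(a_1,b_1), (a_2,b_2)\in A\times B$ have the same image in $Y_{\kbar}$ over a given $[a']\in A/G$ if and only if $a_2 = a_1 + g$ for some $g\in G'$ and $b_2 = (nd\cdot g)\cdot b_1$. Since $G$ has exponent $n$ and $n\mid nd$, we have $nd\cdot g = d\cdot (ng) = 0$ in $G'$, forcing $b_2 = b_1$. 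Hence the map $A\times B\to Y_{\kbar}$ factors through an isomorphism $(A/G)\times B\xrightarrow{\sim}Y_{\kbar}$, so $Y_{\kbar}$ is an abelian surface and $Y$ is birational to a $k$-torsor under the abelian variety $\Alb^0_Y$, a $k$-form of $(A/G)\times B$.

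The remaining step is to check that $Y$ satisfies the hypothesis of Theorem~\ref{thm:QuotientOfPHS} for $d = nd$. By Remark~\ref{rmk:PHSSatisfyAssumption} and Theorem~\ref{thm:PreciseMainPHS}, it suffices to prove $\Supp(\per Y)\subset\Supp(nd)$. From the short exact sequence $0\to B'\to \Alb^0_Y\to \Alb^0_X\to 0$ (with $B'$ a $k$-form of $B$) and the fact that the image of $[Y]\in\HH^1(k,\Alb^0_Y)$ in $\HH^1(k,\Alb^0_X)$ equals $[\Alb^1_X]$ of order $d$, we obtain that $d\cdot[Y]$ lies in the image of $\HH^1(k,B')\to\HH^1(k,\Alb^0_Y)$, coming from some class $\beta\in\HH^1(k,B')$. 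It remains to bound the order of $\beta$.

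The main obstacle is this last period estimate: while the Albanese contribution to $\per Y$ is transparently $d$, controlling the $B'$-contribution $\beta$ requires a careful cohomological analysis. I would argue that the explicit $\Alb^0_X[nd]$-torsor structure on $\pi$, combined with the fact that the splitting $Y_{\kbar}\cong(A/G)\times B$ is the one forced by the trivialization of the $G$-action on $B$ after pulling back by $[nd]$, imposes that $\per(\beta)$ is supported on $\Supp(nd)$. Once $\Supp(\per Y)\subset\Supp(nd)$ is established, Theorem~\ref{thm:QuotientOfPHS}~\eqref{case:2} applied to $\pi\colon Y\to X$ yields that $X$ satisfies $\BMprimary{nd}$.
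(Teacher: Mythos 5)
Your overall strategy --- exhibit a finite abelian cover of $X$ by something birational to a torsor under an abelian surface, bound the support of its period by $\Supp(nd)$, and invoke Theorem~\ref{thm:QuotientOfPHS}\eqref{case:2} via Remark~\ref{rmk:PHSSatisfyAssumption} --- has the same shape as the paper's proof, but your choice of cover creates two genuine gaps. The first is at the very start: Galois-equivariance of the surjection $\pi_1^{\et}(\Alb^1_{X,\kbar})\twoheadrightarrow \Alb^0_X[nd](\kbar)$ only shows that the class of the geometric $[nd]$-cover is Galois-invariant in $\HH^1_{\et}(\Alb^1_{X,\kbar},\Alb^0_X[nd])$; descending it to a connected torsor over $\Alb^1_X$ defined over $k$ meets an obstruction in $\HH^2(k,\Alb^0_X[nd])$, which vanishes precisely when $[\Alb^1_X]$ lies in $nd\cdot\HH^1(k,\Alb^0_X)$. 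Knowing that $[\Alb^1_X]$ has order $d$ does not give divisibility by $nd$ in the Weil--Ch\^atelet group: order and divisibility are independent conditions there (compare the divisibility hypothesis the authors must impose in Theorem~\ref{thm:MainBielliptic}). So the cover $T\to\Alb^1_X$, and hence $Y$, need not exist over $k$. The second gap you flag yourself: the bound $\Supp(\per Y)\subset\Supp(nd)$ is never proved --- the final paragraph replaces the needed control of $\beta\in\HH^1(k,B')$ with ``I would argue that\dots imposes that\dots'', which is not an argument. (Your geometric computation that $Y_{\kbar}\cong (A/G)\times B$ is fine, since $G$ acts on $A$ by translations through a subgroup of exponent $n$.)

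The paper sidesteps both problems. For the cover it uses the $\mu_n$-torsor $V\to X$ attached to the $n$-torsion canonical sheaf, which exists unconditionally because the Kummer sequence shows every $k$-rational $n$-torsion line bundle arises from a class in $\HH^1_{\et}(X,\mu_n)$, and it cites Basile--Skorobogatov for the fact that $V$ is a torsor under an abelian surface. For the period it observes that a finite cover of degree $N$ induces, by pullback of $0$-cycles, a map $\Alb^i_X\to\Alb^{Ni}_V$; since $\Alb^d_X(k)\ne\emptyset$ this forces $\Alb^{Nd}_V(k)\ne\emptyset$, so $\per(V)\mid Nd$ and $\Supp(\per V)\subset\Supp(nd)$ with $N=n$. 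That same one-line $0$-cycle argument would close your second gap (your cover has degree $(nd)^2$, giving $\per(Y)\mid (nd)^2d$, whose support is contained in $\Supp(nd)$), but the first gap is fatal to the construction as written.
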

        	\begin{proof}
            Since the canonical sheaf is $n$-torsion and is defined over $k$, there exists a $\mu_n$-torsor $\pi\colon V \to X$ that is defined over $k$; by~\cite[Proposition 1]{BS-bielliptic} $V$ is a torsor under an abelian surface. Since $\pi$ has degree $n$ and $X$ has period $d$, the period of $V$ must satisfy $\Supp(\per(V))\subset \Supp(nd)$. Indeed, for any $i \in \Z$, the degree $n$ map $V \to X$ induces a map $\Alb^i_X \to \Alb^{ni}_V$. Since $\Alb^d_X(k) \ne \emptyset$ we must have $\Alb_V^{nd}(k) \ne \emptyset$. Then {Theorem~\ref{thm:PreciseMainPHS} and Case $(2)$ of Theorem~\ref{thm:QuotientOfPHS} (see Remark~\ref{rmk:PHSSatisfyAssumption})} show that $X$ satisfies $\BMprimary{nd}$. 
        	\end{proof}

		    {We now state and prove our main result on bielliptic surfaces. Theorem~\ref{thm:MainBielliptic} follows immediately.}
	
        	\begin{Theorem}\label{thm:BiellipticMainTheorem}
        		Let {$(X,\calL)$ be a nice polarized} bielliptic surface over a number field $k$.
			  If the canonical sheaf of $X$ has order $3$ or $6$, assume that $\Alb^1_X$ is not a nontrivial divisible element of $\Sha(k,\Alb^0_X)$. Then $X$ satisfies $\BMprimary{\deg(\calL)}$  
        	\end{Theorem}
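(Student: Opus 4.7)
First I would assume $X(\A_k) \neq \emptyset$, else the statement is vacuous. Lemma~\ref{lem:bielliptic} then shows that $X$ satisfies $\BMprimary{n\per(X)}$, and Corollary~\ref{cor:perioddividesdegree} gives $\Supp(\per(X)) \subset \Supp(\deg\calL)$. Since $K_X$ is numerically trivial on any bielliptic surface, adjunction forces $\calL^2$ to be even, so $2 \mid \deg\calL$ as well. When $n \in \{2,4\}$ these observations combine to give $\Supp(n\per(X)) \subset \Supp(\deg\calL)$, and Lemma~\ref{lem:Support} converts $\BMprimary{n\per(X)}$ into the desired $\BMprimary{\deg\calL}$.

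For $n \in \{3,6\}$ I would invoke the hypothesis on $\Alb^1_X$. The Albanese map produces an adelic point on $\Alb^1_X$, so $\Alb^1_X \in \Sha(k,\Alb^0_X)$. If $\Alb^1_X(\A_k)^{\Br}=\emptyset$ then Corollary~\ref{cor:albob} finishes the argument. Otherwise, since $\Alb^0_X$ is an elliptic curve, Manin's theorem together with the Cassels--Tate pairing places $\Alb^1_X$ in the maximal divisible subgroup of $\Sha(k,\Alb^0_X)$; the hypothesis then forces $\Alb^1_X$ to be trivial, so $\per(X) = 1$ and we may identify $\Alb^1_X$ with $E := \Alb^0_X$, which has a $k$-point.

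I would next exploit the second elliptic fibration. By Proposition~\ref{prop:fibration} there is a $k$-morphism $\Phi\colon X \to C$ to a genus-zero curve $C$, and $X(\A_k) \neq \emptyset$ forces $C \cong \PP^1_k$. The Bagnera--de~Franchis classification shows that the multiple fibers of $\Phi$ over $\kbar$ have multiplicities $(3,3,3)$ when $n=3$ (types 5 and 6) and $(2,3,6)$ when $n=6$ (type 7). Whenever a $\kbar$-point of $\PP^1$ with stabilizer of maximal order $n$ in $G$ happens to be $k$-rational, the corresponding reduced multiple fiber is geometrically $A/G \cong E$ and meets a general Albanese fiber with intersection number $|G|/n = 1$; hence the restricted Albanese map is a degree-one $k$-morphism onto $E$, i.e., a $k$-isomorphism, which lifts any $k$-point of $E$ to $X$. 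In type 7 the three multiplicities are pairwise distinct, so Galois fixes each multiple-fiber point on $\PP^1_k$, yielding $X(k) \neq \emptyset$ and trivializing the obstruction. In types 5 and 6, the same argument applies if any of the three multiplicity-$3$ points is $k$-rational.

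The main obstacle is the remaining subcase of types 5 and 6, in which Galois permutes the three multiplicity-$3$ points transitively; here I must show $3 \mid \deg\calL$. I would analyze the image of $\Pic X \to \NS(\Xbar)^{\Gamma_k}$ modulo torsion. The Albanese fiber class $\tilde F$ and the full $\Phi$-fiber class $F = F_{0,1}+F_{0,2}+F_{0,3}$ both lie in $\Pic X$ and satisfy $\tilde F \cdot F = 3$. The common class $F/3 = F_{0,i}$ in $\NS(\Xbar)^{\Gamma_k}$ fails to come from a $k$-rational line bundle in this regime, as one checks via the Hochschild--Serre obstruction in $\HH^1(k,\Pic^0\Xbar) = \HH^1(k,E)$: the cocycle $\sigma \mapsto \sigma(F_{0,1}) - F_{0,1} \in E(\kbar)[3]$ represents a nontrivial class there. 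Consequently the image of $\Pic X \to \NS(\Xbar)^{\Gamma_k}$ modulo torsion equals $\langle \tilde F, F\rangle$, so $\deg\calL = 2AB \cdot (\tilde F \cdot F) = 6AB$ is divisible by $3$. Combined with $\BMprimary{n\per(X)} = \BMprimary{3}$ from Lemma~\ref{lem:bielliptic} and Lemma~\ref{lem:Support}, this yields $\BMprimary{\deg\calL}$.
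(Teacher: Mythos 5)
Your opening reductions agree with the paper's: evenness of the N\'eron--Severi lattice disposes of $n\in\{2,4\}$, and the hypothesis on $\Alb^1_X$ together with Manin's theorem reduces the remaining cases to $\Alb^1_X$ trivial, so that $E:=\Alb^0_X$ has a $k$-point. The serious problems begin with the type $(\gamma,n)=(9,3)$. There the reduced multiplicity-$3$ fiber $A_0$ meets a general Albanese fiber in $|G|/n=3$ points, not $1$, so the restricted Albanese map $A_0\to E$ has degree $3$ and is not an isomorphism; a $k$-rational multiple-fiber point therefore only produces a $0$-cycle of degree $3$ on the genus-one curve $A_0$, not a rational point. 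The paper has to work harder here: it shows $D=A_0+\tfrac53 B$ is very ample, uses Bertini to cut a transversal curve giving a $0$-cycle of degree $5$ on $A_0$, and combines this with the degree-$3$ cycle to get a degree-$1$ cycle, hence a point by Riemann--Roch. Likewise, in your final paragraph $\tilde F\cdot F=\gamma=9$ for this type (not $3$), $\Num(\Xbar)$ is generated by $A_0$ and $\tfrac13 B$ rather than by $\tilde F$ and $F$, and you never control whether the class $\tfrac13 B$ descends (the quantity $e_B$ in the paper); ruling out intermediate lattices between $\langle \tilde F,F\rangle$ and $\Num(\Xbar)$ is genuinely needed and is where the paper's ``degree divisible by $6$'' claim comes from.

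The second gap is the assertion that when Galois permutes the three multiplicity-$3$ points transitively, the cocycle $\sigma\mapsto\sigma(F_{0,1})-F_{0,1}$ ``represents a nontrivial class'' in $\HH^1(k,\Pic^0\Xbar)$. A nonvanishing cocycle can perfectly well be a coboundary, and you give no argument; moreover the differences $F_{0,i}-F_{0,j}$ need not even lie in $\Pic^0\Xbar$ --- they can be nonzero torsion classes in $\NS(\Xbar)$, which is exactly the mechanism the paper exploits. The paper's logic runs in the opposite direction: it takes as its dichotomy whether the class of $A_0$ is represented by a $k$-rational divisor ($e_A=1$ or not), and in the case $e_A=1$ uses the nontriviality of $K_X=2F_1+2F_2+2F_3-2A$ to show the $F_i$ are not all linearly equivalent, so Galois cannot act transitively and some $F_i$ is itself an effective $k$-rational divisor; the case $e_A>1$ (together with $e_B$) is what forces $6\mid\deg(\calL)$. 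Your proposal inverts this into ``transitive $\Rightarrow$ no descent'' without supplying the argument, so the key step of the hardest case is missing.
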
		

            \begin{Remark}
                If the canonical sheaf of $X$ has order $3$ or $6$, then it follows from the Bagnera-de Franchis classification that the elliptic curve $\Alb^0_X$ has $j$-invariant $0$.
            \end{Remark}

        	\begin{proof}
        		Let $n$ denote the order of the canonical sheaf of $X$. We will show that one of the four possibilities always occurs:
        		\begin{enumerate}
				\item $X$ is not locally solvable;
				\item There is a $k$-rational point on $X$;
        		\item There is a Brauer-Manin obstruction to rational points on $\Alb^1_X$; or
        		\item Every globally generated ample line bundle $\calL$ on $X$ has $\Supp(n)\subset\Supp(\deg(\calL))$.
        			              
        		\end{enumerate}

                \item In the first two cases $\BMprimary{d}$ holds trivially for every $d$. In the third case, the theorem follows from Corollary~\ref{cor:albob}. In the fourth case we have 
                $\Supp(\deg(\calL)) = \Supp(n\deg(\calL))$ and, by Corollary~\ref{cor:perioddividesdegree}, $\Supp(n\deg(\calL))\supset \Supp(n\per(X))$,
                so we can apply Lemma~\ref{lem:bielliptic} to conclude that $\BMd{\deg(\calL)}$ holds.

        		By {the adjunction formula}, the N\'eron-Severi lattice is even, so when $n \in \{ 2, 4\}$ we are in case (4) above. By the Bagnera-de Franchis classification we may therefore assume $(\gamma,n)$ is one of $(6,6)$, $(3,3)$ or $(9,3)$. Furthermore we can assume $X$ is locally soluble. Then $\Alb^1_X$ represents an element in $\Sha(k,\Alb^0_X)$. If this class is nontrivial, then Manin's theorem (cf. the proof of Proposition~\ref{prop:Bcyr}) shows that $\Alb^1_X(\A_k)^{\Br} = \emptyset$ and we are in case (3) above.
		
        		Since $X$ is locally solvable Proposition~\ref{prop:fibration} gives a $k$-morphism $\Phi\colon X \to \PP^1$. Since $\PP^1$ and $\Alb^1_X$ have $k$-points, we obtain $k$-rational fibers $A, B \in \Div(X)$ above $k$-points of $\PP^1$ and $\Alb^1_X$, respectively. All fibers of $\Psi$ are smooth genus one curves geometrically isomorphic to $B$, while the general fiber of $\Phi$ is geometrically isomorphic to $A$, but there are $3$ multiple fibers with (at least) one having multiplicity $n$~\cite[Table 2]{Serrano1990}. Let $A_0 \in \Div(\Xbar)$ denote the reduced component of a multiple fiber with multiplicity $n$. By \cite[Theorem 1.4]{Serrano1990}, the classes of $A_0$ and $\frac{n}{\gamma}B$ give a $\Z$-basis for $\Num(\Xbar)$ and the intersection pairing is given by $B^2 = A^2 = 0$ and $A \cdot B = \gamma$. The ample subset of $\Pic \Xbar$ maps to the set of positive integral linear combinations of $A_0$ and $\frac{n}{\gamma}B$ in $\Num(\Xbar)$.

        		Let $e_A$ and $e_B$ be the smallest positive integers such that the classes of $e_AA_0$ and $e_B\left(\frac{\gamma}{n}B\right)$ in $\NS(\Xbar)$ are represented by $k$-rational divisors on $X$. Since $nA_0 = A$ in $\Pic \Xbar$ we have $e_A \mid n$. Moreover, $e_A = 1$ when $(\gamma,n) = (6,6)$, because in this case $\Phi$ has a unique fiber of multiplicity $6$, which must lie over a $k$-rational point. Similarly, $\frac{\gamma}{n}\left(\frac{n}{\gamma}B\right) = B$, so $e_B \mid \frac{\gamma}{n}$. Therefore both $e_A$ and $e_B$ must divide $3$. It follows that when $e_Ae_B > 1$ every $k$-rational ample divisor on $X$ has degree divisible by $6$ and we are in case (2) above. Therefore we are reduced to considering the case $e_A = e_B = 1$, in which case we will complete the proof by showing that $X(k) \ne \emptyset$.

        		First we claim that when $e_A = 1$ the class of $A_0$ in $\NS(\Xbar)$ is represented by an effective $k$-rational divisor. In the case $n = 6$, we have seen that $A_0 \in \Div(X)$. In the case $n = 3$, let $F_1 = A_0,F_2,F_3$ denote the reduced components of the multiple fibers of $\Phi$. The class of the canonical sheaf on $X$ is represented by $2F_1 + 2F_2 + 2F_3 -2A$~\cite[Thm. 4.1]{Serrano-IsotrivialFiberedSurfaces}. Since the canonical sheaf is not trivial, the $F_i$ cannot all be linearly equivalent. The assumption that $e_A = 1$ implies that $F_1 = A_0$ is linearly equivalent to each of its Galois conjugates. The Galois action must permute the $F_i$, but it cannot act transitively, so some $F_i$ must be fixed by Galois. This $F_i$ is an effective $k$-rational divisor representing the class of $A_0$. Relabeling if necessary, we may therefore assume that $A_0$ is an effective $k$-rational divisor.

        		When $\gamma = n$, $\frac{n}{\gamma}B = B$ is a effective $k$-rational divisor intersecting $A_0$ transversally and $A_0 \cdot B = 1$, so $A_0 \cap B$ consists of a $k$-rational point. When $\gamma \ne n$ we have $(\gamma,n) = (9,3)$. In this case, $D  = A_0 + \frac{5}{3}B$ is very ample \cite[Theorem 2.2]{Serrano1990}. By Bertini's theorem \cite[Lemma V.1.2]{Hartshorne} the complete linear system $|D|$ contains a curve $C \in \Div(X)$ intersecting $A_0$ transversally. This gives a $k$-rational $0$-cycle of degree $5$ on $A_0$. On the other hand, the restriction of $\Psi$ to $A_0$ gives a degree $3$ \'etale map $A_0 \to \Alb^1_X$. As $\Alb^1_X(k) \ne \emptyset$, this gives a $k$-rational $0$-cycle of degree $3$ on $A_0$. Then $A_0$ is a genus one curve with a $k$-rational $0$-cycle of degree $1$, so it (and hence $X$) must have a $k$-point.
        	\end{proof}
		
    \subsection{Kummer varieties}\label{sec:TwistedKummers}

    	Let $A$ be an abelian variety over $K$. Any $K$-torsor $T$ under $A[2]$ gives rise to a \defi{$2$-covering} $\rho \colon V \to A$, where $V$ is the quotient of $A\times_KT$ by the diagonal action of $A[2]$ and $\rho$ is the projection onto the first factor. Then $T = \rho^{-1}(0_A)$ and $V$ has the structure of a $K$-torsor under $A$. The class of $T$ maps to the class of $V$ under the map $\HH^1(K,A[2]) \to \HH^1(K,A)$ induced by the inclusion of group schemes $A[2] \hookrightarrow A$ and, in particular, the period of $V$ divides $2$.
	
    	Let $\sigma \colon \widetilde{V} \to V$ be the blow up of $V$ at $T \subset V$. The involution $[-1]\colon A \to A$ fixes $A[2]$ and induces involutions $\iota$ on $V$ and $\tilde{\iota}$ on $\widetilde{V}$ whose fixed point sets are $T$ and the exceptional divisor, respectively. The quotient $\widetilde{V}/\tilde{\iota}$ is geometrically isomorphic to the Kummer variety of $A$, so in particular is smooth.  We call the quotient $\widetilde{V}/\tilde{\iota}$ the \defi{Kummer variety} associated to the $2$-covering $\rho\colon V \to A$.
        \begin{Theorem}
            Let $X$ be a Kummer variety over a number field $k$. Then $X$ satisfies $\BMprimary{2}$.
        \end{Theorem}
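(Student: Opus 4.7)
The plan is to realize $X$ as the base of a ramified double cover whose source is birational to a torsor of period dividing $2$ under an abelian variety, and then deduce $\BMprimary{2}$ directly from Theorem~\ref{thm:QuotientOfPHS}\eqref{case:1}.

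First, I would identify the relevant double cover. By the very construction of $X$ from a $2$-covering $\rho\colon V\to A$, we have $X = \widetilde{V}/\tilde{\iota}$, and the fixed locus of $\tilde{\iota}$ on $\widetilde{V}$ is precisely the exceptional divisor of $\sigma\colon \widetilde{V}\to V$. This fixed locus is a smooth effective divisor, so the quotient map $\pi\colon \widetilde{V}\to X$ is a ramified double cover of smooth projective varieties, of exactly the type required by Theorem~\ref{thm:QuotientOfPHS}\eqref{case:1}.

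Next, I would verify the first hypothesis of Theorem~\ref{thm:QuotientOfPHS} with $Y = \widetilde{V}$ and $d = 2$. Since blowing up along a smooth center is a birational morphism, $\widetilde{V}$ is birational to $V$, which is a $k$-torsor under the abelian variety $A$ of period dividing $2$. Remark~\ref{rmk:PHSSatisfyAssumption}, which packages Theorem~\ref{thm:PreciseMainPHS}, then provides precisely the implication needed: for any subgroup $B\subset \Br\widetilde{V}$, if $\widetilde{V}(\A_k)^B = \emptyset$ then $\widetilde{V}(\A_k)^{B[2^\infty]} = \emptyset$.

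The only remaining hypothesis is the finiteness of $(\Br X/\Br_0 X)[2^\infty]$. This is where the substantive external input lies, and I would invoke the Skorobogatov--Zarhin finiteness results for Brauer groups of Kummer varieties: the $2$-primary transcendental part $(\Br\Xbar)[2^\infty]$ is finite (ultimately because this holds for abelian varieties, and the Kummer construction modifies the geometric Brauer group only in a controlled way through the blowup and involution quotient), while $(\Br_1 X/\Br_0 X)[2^\infty]$ is finite because the Hochschild--Serre spectral sequence embeds it into $\HH^1(k,\Pic\Xbar)$ and $\NS\Xbar$ is finitely generated. Combining these through the exact sequence $0\to \Br_1 X/\Br_0 X\to \Br X/\Br_0 X\to \Br\Xbar$ yields the required finiteness.

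With all three hypotheses of Theorem~\ref{thm:QuotientOfPHS}\eqref{case:1} in place, its conclusion gives that $X$ satisfies $\BMprimary{2}$. The main obstacle is the finiteness of $(\Br X/\Br_0 X)[2^\infty]$; it is nontrivial but available from the literature, and once it is in hand the remainder of the argument is a direct orchestration of results already proved in the paper.
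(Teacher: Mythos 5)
Your proposal is correct and follows essentially the same route as the paper: the paper's proof likewise applies case (1) of Theorem~\ref{thm:QuotientOfPHS} to the ramified double cover $\widetilde{V}\to X$, verifies the first hypothesis via Theorem~\ref{thm:PreciseMainPHS} (through Remark~\ref{rmk:PHSSatisfyAssumption}), and cites \cite[Corollary 2.8]{SZ16} for the finiteness of $(\Br X/\Br_0 X)[2^\infty]$.
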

        
        \begin{proof}
            By definition every Kummer variety admits a double cover by a smooth projective variety birational to a torsor of period dividing $2$ under an abelian variety. {Moreover $(\Br X / \Br_0 X)[2^\infty]$ is finite by~\cite[Corollary 2.8]{SZ16}}.  So the theorem follows from {Theorem~\ref{thm:PreciseMainPHS} and case (1) of Theorem~\ref{thm:QuotientOfPHS} (see  Remark~\ref{rmk:PHSSatisfyAssumption})}.
        \end{proof}

\section{Severi-Brauer {bundles}}\label{sec:ConicBundle}

{A \defi{Severi-Brauer bundle} is a nice variety $X$ together with a dominant morphism $\pi \colon X \to Y$ to a nice variety $Y$ such that the generic fiber is a smooth Severi-Brauer variety.}  

\begin{Theorem}\label{thm:ConicBundle}
    Let $\pi \colon X \to C$ be a Severi-Brauer bundle, with $C$ a curve.  Assume either
    \begin{enumerate}
        \item $g(C) = 0$ and $\pi$ is minimal of relative dimension $1$, or\label{part:genus0}
        \item $g(C) = 1$, $\pi$ is a smooth morphism, and $\Sha(k, \Alb_C^0)$ is finite.\label{part:genus1}
    \end{enumerate}
    Then degrees capture the Brauer-Manin obstruction on $X$.
\end{Theorem}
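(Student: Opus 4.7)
The plan is to handle cases (1) and (2) separately, reducing each to earlier results of the paper.

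In case (1), I would first reduce to $C \simeq \PP^1$: smooth genus-$0$ curves are conics and satisfy the Hasse principle, so $C(k) = \emptyset$ would force $C(\A_k) = \emptyset$ and hence $X(\A_k) = \emptyset$, making the claim vacuous. Assuming $C \simeq \PP^1$, the surface $X$ is a minimal conic bundle over $\PP^1$ and hence geometrically rational, so $\Br \Xbar = 0$ and $\Br X / \Br_0 X \simeq \HH^1(k, \Pic \Xbar)$. A Galois-module analysis of $\Pic \Xbar$ --- generated by a fiber class, a section, and pairs of components of the degenerate fibers on which Galois acts by permutations --- shows that this cohomology is annihilated by $2$, so $X$ satisfies $\BMprimary{2}$. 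An intersection-theoretic calculation on the Galois-invariant sublattice of $\NS \Xbar$ shows in addition that every globally generated ample line bundle on $X$ has even degree (the split case is trivial, since there $\Br X = \Br_0 X$). By Lemma~\ref{lem:Support}, this yields that degrees capture.

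In case (2), set $E := \Alb^0_C$. Since Severi-Brauer varieties are rationally connected, the Albanese map identifies $\Alb^1_X$ with $C$ as $k$-torsors under $E$. We may assume $X(\A_k) \ne \emptyset$, so $[C] \in \Sha(k, E)$. If $[C] \ne 0$ in $\Sha$, the assumed finiteness of $\Sha(k, E)$ renders the Cassels-Tate pairing nondegenerate, and Manin's theorem (as in the proof of Proposition~\ref{prop:Bcyr}) yields $C(\A_k)^{\Br} = \emptyset$; hence $\Alb^1_X(\A_k)^{\Br} = \emptyset$ and Corollary~\ref{cor:albob} finishes the argument. If instead $[C] = 0$, so $C \simeq E$ has a rational point, I would argue directly that $X(\A_k)^{\Br} \ne \emptyset$, making the conclusion trivial: the Leray spectral sequence for the smooth morphism $\pi$ realizes every class in $\Br X$ as a pullback from $\Br C$ modulo the class of the generic fiber, so Brauer-Manin orthogonality on $X$ reduces to orthogonality on $C$, which can be arranged by using the smoothness of $\pi$ (openness of the locally soluble locus in each $X(k_v)$) to adjust any given $(Q_v) \in X(\A_k)$ so that $(\pi(Q_v))$ lies in $\overline{E(k)} \subset E(\A_k)^{\Br}$.

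The main obstacle is the verification of subcase (2b): both the structural identification of $\Br X$ with a quotient of $\pi^* \Br C$ via the Leray spectral sequence and the adelic-point adjustment argument require careful analysis of Leray cohomology and of how local solubility varies in fibers. In case (1), the delicate point is the parity claim for ample degrees in the non-split regime, which rests on an explicit description of the Picard lattice and intersection form of a minimal conic bundle.
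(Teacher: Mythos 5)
Your case (1) and the subcase of (2) with $[C]\neq 0$ in $\Sha(k,\Alb^0_C)$ follow the paper's route (a $2$-torsion Brauer group together with even ample degrees for minimal conic bundles over $\PP^1$; Manin's theorem plus Corollary~\ref{cor:albob} when $C(\A_k)^{\Br}=\emptyset$). The gap is in your subcase (2b): it is \emph{not} true that $C(k)\neq\emptyset$ and $X(\A_k)\neq\emptyset$ force $X(\A_k)^{\Br}\neq\emptyset$. Writing $E=C$ and $\calA\in\Br E$ for the class of the generic fibre, one has $\pi(X(k_v))=\{P\in E(k_v):\calA(P)=0\in\Br k_v\}$, a proper open-and-closed subset of $E(k_v)$ in general; since $\Br X=\pi^*\Br E$, the condition $X(\A_k)^{\Br}\neq\emptyset$ is equivalent to $\prod_v\pi(X(k_v))$ meeting $E(\A_k)^{\Br}$, which (as $\Sha(k,E)$ is finite) is essentially the closure of $E(k)$. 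When $E(k)$ is finite this is a finite check that can fail: take $E$ of rank $0$ and $\calA=\calA_0+c$ with $\calA_0$ nonconstant, $\calA_0(O)=0$, and $c\in\Br k$ a nonzero constant whose local invariants at the bad places lie in the image of the local evaluation maps of $\calA_0$; then each fibre over a suitable adelic point is locally soluble, so $X(\A_k)\neq\emptyset$, yet $O\notin\pi(X(k_v))$ at the places where $c_v\neq 0$, so $X(\A_k)^{\Br}=\emptyset$. Your ``adjustment'' of $(Q_v)$ cannot push $(\pi(Q_v))$ into the closure of $E(k)$, because local solubility of the fibres obstructs exactly that; openness of $\pi(X(k_v))$ does not help when the target set is finite.

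Consequently the real content of subcase (2b) is missing from your proposal: one must show that the $d$-primary obstruction (with $d$ the order of $\calA$) already detects the failure, and separately that $d$ divides the degree of every globally generated ample line bundle. The paper does the former in Proposition~\ref{prop:BSfibration}: if $(Q_v)\in X(\A_k)^{\Br_d}$, then $(\pi(Q_v))$ is orthogonal to $(\Br E)[d^\infty]$, hence by descent and finiteness of $\Sha(k,E)[d^\infty]$ it is congruent to a rational point $P\in E(k)$ modulo $dE(\A_k)$; since $d\calA=0$ this gives $\calA(P)\otimes k_v=\calA(\pi(Q_v))=0$ for all $v$, and Albert--Brauer--Hasse--Noether yields $X_P(k)\neq\emptyset$. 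The degree divisibility is then a N\'eron--Severi computation using the exact sequence $0\to\Z F\to\NS X\to\NS X_\eta\to 0$ and the fact that $\NS X_\eta$ is generated by $dH$. Neither step can be bypassed by declaring the conclusion trivial.
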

\begin{Remark}
    If $\pi\colon X\to C$ is a {minimal conic bundle} over a positive genus curve $C$ with singular fibers, then degrees do not necessarily capture the Brauer-Manin obstruction.  We construct a counterexample in~\S\ref{sec:Counterexample}.
\end{Remark}
In the case that $g(C) = 0$, it is well-known that $\Br X/\Br_0 X$ is $2$-torsion, so $X$ trivially satisfies $\BMprimary{2}$.  Then the result follows from the following lemma.

 \begin{Lemma}\label{lem:DegreesConicBundles}
     Let $\pi \colon X \to C$ be a {minimal conic bundle over a curve $C$} without a section.  Then the degree of any globally generated ample line bundle in $\Pic X$ is divisible by $4i$, where $i$ is the odd part of the index of $C$.
 \end{Lemma}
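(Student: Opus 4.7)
The plan is to establish two divisibility statements about $\deg(\mathcal{L}) = \mathcal{L}^2$ separately and combine them using the coprimality of $4$ and $i$.

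First, I would show that $i \mid \mathcal{L}^2$. By Lemma~\ref{lem:perioddividesdegree}, $\mathcal{L}^2$ is the degree of a $k$-rational $0$-cycle on $X$, so it is divisible by the index $i(X)$ of $X$. For any closed point $P \in X$, the morphism $\pi$ sends $P$ to a closed point $\pi(P) \in C$ whose degree divides $\deg P$, so $i(C) \mid i(X) \mid \mathcal{L}^2$; in particular $i \mid \mathcal{L}^2$.

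Second, I would prove $4 \mid \mathcal{L}^2$. Since $\pi$ has no section, the generic fiber $X_\eta$ is a smooth conic over $\kk(C)$ with no $\kk(C)$-rational point and hence of index $2$; consequently $\Pic X_\eta \cong \Z$ is generated by a hyperplane class of degree $2$, so the restriction map $\Pic X \to \Pic X_\eta$ lands in $2\Z$ and $\mathcal{L} \cdot F = 2n$ for some integer $n$, where $F$ is a smooth fiber. Passing to $\kbar$, the surface $\Xbar$ is birationally a $\PP^1$-bundle over $\Cbar$ obtained by contracting one component of each geometrically reducible fiber; this yields a decomposition
\[
\Pic \Xbar \otimes \Q \;=\; \Q\cdot \overline{H}_1 \;\oplus\; \pi^*\Pic \Cbar \otimes \Q \;\oplus\; \bigoplus_{\tilde t} \Q\cdot E_{\tilde t},
\]
with $\overline{H}_1 \cdot F = 1$ and $E_{\tilde t}$ a component of the singular fiber over a geometric point $\tilde t$. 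Minimality over $k$ rules out any Galois-invariant collection of pairwise disjoint $(-1)$-curves contractible over $k$; this forces the two geometric components of each singular fiber to be swapped by some element of $\Gamma_k$, so the only $\Gamma_k$-invariant classes supported on singular-fiber components lie in $\pi^*\Pic \Cbar$. Consequently, in $\Pic \Xbar \otimes \Q$ the class $\mathcal{L}$ admits a decomposition $\mathcal{L} = 2n\,\overline{H}_1 + \pi^*\overline{M}$ for some $\overline{M} \in \Pic \Cbar \otimes \Q$, and then
\[
\mathcal{L}^2 \;=\; 4n^2 \overline{H}_1^2 + 4n \deg \overline{M} \;\equiv\; 0 \pmod 4.
\]

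Finally, since $\gcd(4, i) = 1$, combining the two divisibilities yields $4i \mid \mathcal{L}^2 = \deg(\mathcal{L})$.

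The hard part will be the Galois analysis in the second step: checking that the minimality hypothesis really does force every geometric singular fiber into the ``swapped-components'' configuration, and verifying that this swap condition is enough to eliminate every singular-fiber contribution in a $\Gamma_k$-equivariant decomposition of the $k$-rational class $\mathcal{L}$.
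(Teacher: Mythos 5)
Your reduction to the two separate divisibilities $i \mid \deg(\calL)$ and $4 \mid \deg(\calL)$ is sound, and your first step is correct; it is also a genuinely different route from the paper's, which obtains the odd factor by showing that $S^2$ is divisible by the index of a double cover $C'\to C$ over which the generic fibre acquires a point (via the determinant of a rank two bundle), rather than by pushing the $0$-cycle of Lemma~\ref{lem:perioddividesdegree} forward along $\pi$ as you do.

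The gap is in the second step, precisely where you flag ``the hard part,'' and the Galois argument you sketch does not close it. The class $\calL-2n\overline{H}_1$ is not $\Gamma_k$-invariant (a geometric section, or the relative hyperplane class of the contracted $\PP^1$-bundle, is not defined over $k$), so the true statement that $\Gamma_k$-invariant classes supported on fibre components lie in $\pi^*\Pic\Cbar$ tells you nothing about the $E_{\tilde t}$-coefficients of $\calL$. Indeed those coefficients need not vanish: writing $\calL=2nS+bF+\sum_t c_tE_t$ in the basis $\{S,F,E_t\}$ of $\Num\Xbar\otimes\Q$ ($S$ a geometric section, $E_t$ one component of each of the $r$ singular geometric fibres), what $\Gamma_k$-invariance of $\calL$ together with minimality actually forces is $\calL\cdot(E_t-E_t')=0$, i.e.\ $c_t=\pm n$ rather than $c_t=0$ --- for instance $-K_X$ always has $c_t=\pm1$. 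Feeding this back into the intersection form gives $\calL^2\equiv-\sum_tc_t^2\equiv-rn^2\pmod 4$, so your computation yields $4\mid\calL^2$ only when $4\mid r$ or $n$ is even, an input the proposal does not supply. (The paper's own proof asserts the analogous decomposition $\calL\equiv 2aS+b\ind(C)F$ with no fibre-component terms at this same step, so this is the delicate point of the lemma in either approach; but your argument as written does not establish it.)
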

 \begin{proof}
     Since $\pi$ has no section, the class of the generic fiber in $\Br \kk(C)$ has order $2$.  Then, since $\pi\colon X \to C$ is minimal, any line bundle in $\calL \in \Pic X$ is algebraically equivalent (over $\kbar$) to $\OO_X(2aS + b\ind(C)F)$, where $a,b,\in\Z$, $S$ is a geometric section of $\pi$ (which exists by Tsen's theorem) and $F$ is the class of a fiber over a $\Kbar$-point of $C$.  Thus, we have
     \[
         \deg(\calL) = \deg(\calL_{\Kbar}) = 4a^2S^2 + 4ab\ind(C).
     \]
     Further, since there is a double cover $C'\to C$ such that the conic bundle $\pi'\colon X\times_C C' \to C'$ has a section, $S^2$ is equal to the degree of the wedge power of a rank $2$ vector bundle on $X\times_C C'$\cite[p. 30 and Prop. III.18]{Beauville-ComplexSurfaces}.  Since any degree $n$ point on $C$ yields a degree $n$ or $2n$ point on $C'$, the index of $C$ and $C'$ can differ only by a factor of $2$.  Therefore, $S^2 \in \ind(C')\Z \subset i\Z$ and so $\deg(\calL) = 4a^2S^2 + 4ab\ind(Y) \equiv 0 \pmod{4i}.$
 \end{proof}

{To prove Part~\eqref{part:genus1} of Theorem~\ref{thm:ConicBundle} we require the following result about Severi-Brauer bundles with no singular fibers.}

     \begin{Proposition}\label{prop:BSfibration}
         Let $\pi\colon X \to Y$ be a Severi-Brauer bundle 
         over a nice $k$-variety $Y$ with $\pi$ a smooth morphism. Let $\calA \in \Br Y$ denote the class of the generic fiber of $\pi$ and let $d$ be the order of $\calA$. Suppose that
         \begin{enumerate}
             \item $\Sha(k,\Alb^0_Y)[d^\infty]$ is finite,
             \item $Y(k) \ne \emptyset$,
             \item The canonical map $Y(k) \to \Alb^1_Y(k)/d\Alb^0_Y(k)$ is surjective, and
             \item For every prime $v$, the evaluation map $\calA_v \colon \CH_0^{(0)}(Y_v) \to \Br k_v$ factors through $\Alb^0_Y(k_v)$.
         \end{enumerate}
         Then $X(\A)^{\Br_d}\ne \emptyset \Rightarrow X(k) \ne \emptyset$. In particular, $X$ satisfies $\BMprimary{d}$.
     \end{Proposition}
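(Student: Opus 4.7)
The plan is to produce a $k$-rational point $P \in Y(k)$ at which $\calA$ vanishes in $\Br k$; the fibre $X_P$ is then a Severi--Brauer variety over $k$ whose class in $\Br k$ is trivial, so $X_P(k) \neq \emptyset$ and in particular $X(k) \neq \emptyset$. Because this argument applies to any adelic point in $X(\A)^{\Br_d}$, the $\BMprimary{d}$ assertion follows immediately.

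Let $(Q_v) \in X(\A)^{\Br_d}$ and set $P_v := \pi(Q_v) \in Y(k_v)$. Since $Q_v$ lies on the fibre $X_{P_v}$, that fibre is a split Severi--Brauer variety, so $\calA(P_v) = 0 \in \Br k_v$ for every $v$. Fix $P_0 \in Y(k)$ via hypothesis~(2) and form the Albanese map $\mu\colon Y \to Z := \Alb^0_Y$ sending $P_0$ to $0$; set $\alpha_v := \mu(P_v) \in Z(k_v)$. Hypothesis~(4) supplies homomorphisms $\phi_v\colon Z(k_v) \to (\Br k_v)[d]$ such that $\calA(R) = \calA(P_0) + \phi_v(\mu(R))$ for every $R \in Y(k_v)$, and consequently
\[
\phi_v(\alpha_v) = -\calA(P_0) \quad \text{in } \Br k_v \text{ for every place } v.
\]

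The heart of the argument is to produce $\beta \in Z(k)$ with $\phi_v(\beta) = \phi_v(\alpha_v)$ for every $v$. A Hochschild--Serre analysis identifies the class of $\calA$ in $\Br Y/\Br_0 Y$ with an element $\xi \in \HH^1(k, Z^*)[d]$, where $Z^* = \Pic^0_Y$ is dual to $Z$, and identifies each $\phi_v$ with cup product against $\textup{res}_v\xi$ under local Tate duality. The orthogonality of $(Q_v)$ to the full $d$-primary subgroup $\pi^*(\Br Y)[d^\infty] \subset \Br_d X$ then pushes forward through $\mu$ to the assertion that the adelic point $(\alpha_v)$ of $Z$ is orthogonal, in the Brauer--Manin sense, to every element of $(\Br_1 Z)[d^\infty]$. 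Hypothesis~(1), via Manin's theorem and nondegeneracy of the Cassels--Tate pairing on $\Sha(k, Z)[d^\infty]$, then forces $(\alpha_v)$ into the closure of $Z(k)$ in the appropriate adelic topology; because each $\phi_v$ factors through the discrete quotient $Z(k_v)/d$, this closure statement is strong enough to furnish $\beta$.

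Hypothesis~(3) then supplies $P \in Y(k)$ with $\mu(P) \equiv \beta \pmod{dZ(k)}$. Since each $\phi_v$ kills $dZ(k_v)$, we obtain $\calA(P) = \calA(P_0) + \phi_v(\beta) = 0$ in every $\Br k_v$, hence $\calA(P) = 0$ in $\Br k$ by Albert--Brauer--Hasse--Noether. The main technical obstacle will be the Manin/Cassels--Tate step: correctly identifying $\xi$, showing that each $\phi_v$ coincides with pairing against $\textup{res}_v\xi$, and verifying that the Brauer--Manin orthogonality inherited from $(Q_v) \in X(\A)^{\Br_d}$ transports across $\mu$ to the full orthogonality of $(\alpha_v)$ against $(\Br_1 Z)[d^\infty]$ required to apply Manin's theorem.
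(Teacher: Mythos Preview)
Your overall architecture matches the paper's: push the adelic point down to $A := \Alb^0_Y$ via a base point $P_0$, use finiteness of $\Sha(k,A)[d^\infty]$ to approximate it by a rational point modulo $dA$, lift back to $Y(k)$ via hypothesis~(3), and conclude by hypothesis~(4) and Albert--Brauer--Hasse--Noether. The differences are in execution.

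The step you flag as the ``main technical obstacle'' --- transporting Brauer--Manin orthogonality to $(\alpha_v)$ --- is in fact immediate, and the paper dispatches it in one line. Any $\beta \in (\Br A)[d^\infty]$ pulls back along $\iota\circ\pi$ to an element of $(\Br X)[d^\infty]$, and $\langle (Q_v), (\iota\pi)^*\beta\rangle = \langle (\iota(P_v)), \beta\rangle$ by functoriality of the pairing; hence $(\iota(P_v)) \in A(\A_k)^{\Br_d}$. No Hochschild--Serre identification of $\calA$ with a class $\xi$, and no local Tate duality, is required. Note in particular that one gets orthogonality against all of $(\Br A)[d^\infty]$, not just the algebraic part.

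For the approximation step, where you invoke Manin's theorem and the Cassels--Tate pairing to place $(\alpha_v)$ in the adelic closure of $A(k)$ and then argue via continuity of the $\phi_v$, the paper instead uses descent theory directly. Orthogonality to $(\Br A)[d^\infty]$ means $(\iota(P_v))$ lifts to an adelic point on some locally soluble $d^n$-covering of $A$ for every $n$ \cite[Thm.~6.1.2]{TorsorsAndRationalPoints}; finiteness of $\Sha(k,A)[d^\infty]$ forces every such $d^n$-covering, for $n$ large, to be of the form $x \mapsto d^n x + Q$ with $Q \in A(k)$. This yields $Q \in A(k)$ with $\iota(P_v) - Q \in dA(k_v)$ at \emph{every} place simultaneously. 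Hypothesis~(3) then gives $P \in Y(k)$ with $\iota(P) - \iota(P_v) \in dA(k_v)$ for all $v$, and hypothesis~(4) together with $d\calA = 0$ gives $\calA(P)\otimes k_v = \calA(P_v) = 0$ directly, with no need to isolate finitely many bad places. Your closure route would also work, but the descent formulation is cleaner and avoids the bookkeeping you anticipate.
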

         \begin{Remark}\label{rmk:HypOnAlb}
         Hypotheses (3) and (4) of the proposition are satisfied if
         \begin{itemize}
             \item $Y$ is an abelian variety;
             \item $Y$ is a curve such that $\#Y(k) = \#\Alb_Y^0(k)$; or
             \item $Y$ is a curve such that $Y(k) \ne \emptyset$ and $\Alb_Y^0(k)$ is finite of order prime to $d$.
         \end{itemize}
         This is a slight generalization of a result in \cite[Proposition 6.5]{CPS} which proves that the Brauer-Manin obstruction is the only one on $X$ under the assumption that $Y$ is an elliptic curve with $\Sha(k,Y)$ finite.
     \end{Remark}

     \begin{proof}[Proof of Theorem~\ref{thm:ConicBundle}\eqref{part:genus1}]
        If $C(k) = \emptyset$, then the assumption on $\Sha(k, \Alb_C^0)$ implies by Manin's theorem that $C(\A_k)^{\Br} = \emptyset$.  Since $C \isom \Alb^1_X$, we may apply Corollary~\ref{cor:albob} to conclude that degrees capture the Brauer-Manin obstruction on $X$.
 
        Assume that $C(k) \neq \emptyset$ and let $d$ be the order of the generic fiber of $\pi$ in $\Br C$.  Then Proposition~\ref{prop:BSfibration} and Remark~\ref{rmk:HypOnAlb} together show that $X$ satisfies $\BMprimary{d}$ so it suffices to show that $d$ divides the degree of any globally generated ample line bundle.
         
        {Restriction to the generic fiber yields an exact sequence
        \[
            0 \to \Z F \to \NS X \to \NS X_{\eta} \to 0,
        \]
        where $F$ denotes the class of a fiber of $\pi$.  (Here we are using the assumption that $\pi$ is smooth, so there are no reducible fibers.) Since $X_{\eta}$ is a Severi-Brauer variety whose class in the Brauer group has order $d$, $\NS X_{\eta}$ is free of rank $1$ and generated by $dH$, where $H$ is a generator of $\NS \overline{X_{\eta}} \isom \NS \PP^{r} \isom \Z$. Thus any divisor class is $\NS X$ can be represented as $adH' + bF$, where $H'$ is a Zariski closure of $H$.  Since
        \begin{align*}
            (adH'+ bF)^{\dim X} = 
            &(adH')^{\dim X} + (\dim X)(ad)^{\dim X - 1}b(H')^{\dim X-1}\cdot F\\ = 
            &(ad)^{\dim X - 1}(ad\cdot H'^{\dim X} + (\dim X)b(H'^{\dim X - 1}\cdot F)),
        \end{align*}
        and $\dim X > 1$, this shows that the degree of any line bundle is divisible by $d$.}
     \end{proof}

     \begin{proof}[Proof of Proposition~\ref{prop:BSfibration}]
         To ease notation set $A = \Alb^0_Y$. Choose $P_0 \in Y(k)$ and use this to define a $k$-morphism $\iota\colon Y \to A$ sending $P$ to the class of the $0$-cycle $P-P_0$. Suppose that $X(\A_k)^{\Br_d}\ne \emptyset$. Fix a point $(Q_v) \in X(\A_k)^{\Br_d}$ and let $(P_v) := (\pi(Q_v))$. Then $(\iota(P_v)) \in A(\A_k)^{\Br_d}$ by functoriality.  Since $\iota(P_v)$ is orthogonal to $(\Br A)[d^\infty]$ it follows from descent theory (e.g., \cite[Thm. 6.1.2]{TorsorsAndRationalPoints}) that, for every $n$, $\iota(P_v)$ lifts to an adelic point on some $d^n$-covering of $A$. (For the definition of an $N$-covering see \cite[Def. 3.3.1]{TorsorsAndRationalPoints}.) Because $\Sha(k,A)[d^\infty]$ is finite, there is some $n$ such that every $d^n$-covering of $A$ which lifts to a locally soluble $d^{n+1}$-covering of $A$ is of the form $x \mapsto d^{n}x + Q$ for some $Q \in A(k)$. In particular, there is some $Q \in A(k)$ such that $(\iota(P_v) - Q) \in dA(\A_k)$. By assumption (3) there is some $P \in Y(k)$ such that $(\iota(P_v) - \iota(P)) \in dA(\A_k)$. Now by assumption (4) and the fact that $d\calA = 0$ it follows that, for every prime $v$, $\calA(P)\otimes k_v = \calA(P_v)$. Note that $\calA(P_v) = 0\in \Br k_v$ since the fiber $X_{P_v}$ has a $k_v$-point. Thus, $\calA(P)\otimes k_v = 0$ for every $v$. We conclude that the Severi-Brauer variety $X_P$ must be everywhere locally soluble and thus, by the Albert-Brauer-Hasse-Noether theorem, that $X_P(k) \neq \emptyset$.
     \end{proof}
	\subsection{A counterexample}\label{sec:Counterexample}
	
        \begin{Theorem}\label{thm:ConicBundleCounterexample}
            There exists a conic bundle surface $\pi\colon X \to E$ over an elliptic curve, such that X has an ample globally generated line bundle of degree 12 and $X$ does not satisfy $\BMprimary{6}$. In particular, degrees do not capture the Brauer-Manin obstruction.
        \end{Theorem}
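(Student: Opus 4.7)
The counterexample will be a conic bundle whose Brauer group contains an element of some prime order $p \notin \{2,3\}$ producing an obstruction, while no $2$- or $3$-primary Brauer class obstructs; the integer $12$ then witnesses the failure of degrees to capture because $12$-primary and $6$-primary torsion coincide.

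The first step is to choose the base. Fix a number field $k$ and an elliptic curve $E/k$ with $E(k)\neq\emptyset$ (so that $\ind(E)=1$ and Lemma~\ref{lem:DegreesConicBundles} allows degrees as small as multiples of $4$) and with $\Sha(k,E)$ containing an element of order coprime to $6$; existence follows from Lemma~\ref{lem:ArbitraryOrderSha}. Manin's theorem applied to the corresponding locally trivial torsor produces a class $\tau \in \Br E$ of order prime to $6$ and an adelic point $(P_v)\in E(\A_k)$ with nontrivial Brauer-Manin pairing $\tau\bigl((P_v)\bigr)\neq 0$.

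The second step is to build the conic bundle. Define $\pi\colon X\to E$ as a smooth projective model of the Severi-Brauer variety of a quaternion Azumaya algebra $\calQ$ on a Zariski open $U\subset E$, with prescribed ramification along a reduced $k$-rational divisor $D\subset E\setminus U$. The ramification data is selected so that $\pi$ is everywhere locally soluble, making $\pi\colon X(\A_k)\to E(\A_k)$ surjective. Functoriality of the Brauer-Manin pairing then gives $X(\A_k)^{\pi^*\tau} \subset \pi^{-1}\bigl(E(\A_k)^{\tau}\bigr)$, so the prime-to-$6$ class $\pi^*\tau$ forces $X(\A_k)^{\Br}=\emptyset$.

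The third and crucial step is to verify $X(\A_k)^{\Br_6}\neq\emptyset$. Algebraic classes split via the Leray filtration into the image of $\pi^*\colon\Br E\to \Br X$ and classes determined by the conic bundle structure (generic-fiber class and residues at singular fibers, all $2$-torsion). The new conic-bundle classes pair trivially with every adelic point because $\pi$ is locally split; the pulled-back $2$-primary and $3$-primary contributions from $\Br E$ do not obstruct, since $E(k)\neq\emptyset$ and, by the choice of $E$, neither $\Sha(k,E)[2^\infty]$ nor $\Sha(k,E)[3^\infty]$ produces an obstruction on $E$. Any transcendental $2$- or $3$-primary classes on $X$ must be controlled by a direct computation of $\Br X/\pi^*(\Br E)$ from the discriminant locus of $\pi$. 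This cohomological bookkeeping is the main obstacle of the proof, and is where the bulk of the work lies.

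The fourth step is to exhibit an ample globally generated line bundle of degree $12$. Because $\ind(E)=1$, Lemma~\ref{lem:DegreesConicBundles} imposes only divisibility by $4$. Taking an appropriate combination of the pullback of a degree-$3$ divisor on $E$ with the relative hyperplane class of $\pi$ produces a divisor class of degree $12$; after a sufficiently positive adjustment by the pullback of an ample line bundle from $E$, this becomes ample and globally generated while preserving its degree modulo an allowable scaling. Combining this line bundle with the failure of $\BMprimary{6}$ established above completes the construction.
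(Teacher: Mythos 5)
There is a genuine gap at the heart of your construction: your mechanism for forcing $X(\A_k)^{\Br}=\emptyset$ does not work. You arrange the ramification so that $\pi$ is everywhere locally soluble and $\pi\colon X(\A_k)\to E(\A_k)$ is surjective, and you take all obstructing classes to be pulled back from $\Br E$ (the vertical conic-bundle classes pairing trivially by hypothesis). But $E$ has a rational point $O$, so the constant adelic point $(O)$ lies in $E(\A_k)^{\Br}$; by your surjectivity assumption it lifts to some $x\in X(\A_k)$, and by functoriality $x$ is orthogonal to $\pi^*(\Br E)$ (indeed $(\pi^*\tau)(x)=\tau(O)=0$ by global reciprocity) and to the non-obstructing vertical classes. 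Hence $X(\A_k)^{\Br}\neq\emptyset$ and you have no counterexample. The inclusion $X(\A_k)^{\pi^*\tau}\subset\pi^{-1}\bigl(E(\A_k)^{\tau}\bigr)$ is true but useless here, since $E(\A_k)^{\tau}\ni(O)$ is nonempty. Relatedly, your source of the odd-order class is confused: Manin's theorem pairs $\Sha(k,E)$ with the Brauer group of the \emph{locally trivial torsor}, not of $E$ itself, and in the paper's example $\Sha(\Q,E)$ plays no role (it is finite, and the odd information comes from a rational $5$-isogeny, not from $\Sha$).

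The actual construction has to be more delicate, and the delicacy is exactly where your plan is softest. The paper takes $E$ with $E(\Q)=\{O\}$, $\Sha(\Q,E)$ finite, and a rational cyclic subgroup $Z=O\cup P$ of order $5$ with $P$ a degree-$4$ point, and ramifies the conic bundle precisely over $P$ (so $\pi^*\colon\Br E\to\Br X$ is surjective). Finiteness of $\Sha$ forces $E(\A_\Q)^{\Br}=\prod_p\{O\}\times C_O$, and the primes $p_1,p_2,q$ are chosen so that the fiber $X_O$ is \emph{insoluble} at $p_1,p_2$; this is what kills $X(\A_\Q)^{\Br}$ --- i.e.\ $\pi$ is deliberately \emph{not} adelically surjective over the one adelic point that survives the full obstruction. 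Meanwhile the adelic point $(P_v)$ supported on components of $P$ at $p_1,p_2$ and on $O$ elsewhere is $5$-torsion, hence $6$-divisible, hence lies in $E(\A_\Q)^{\Br_6}$ by descent, and its fibers \emph{are} locally soluble; this gives $X(\A_\Q)^{\Br_6}\neq\emptyset$. So the obstruction is not a single pulled-back odd-order class evaluated on a fixed soluble adelic point, but the interaction between prime-to-$6$ divisibility on $E(\A_\Q)$ and the local insolubility of specific fibers. Your third step (``cohomological bookkeeping'' of transcendental $2$- and $3$-primary classes) and fourth step (the degree-$12$ bundle, which the paper obtains from an explicit embedding into $\PP^3\times\PP^1\hookrightarrow\PP^7$) are also left as placeholders, but the second step is the one that cannot be repaired without changing the strategy.
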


        \begin{proof}
            Let $E/\Q$ be an elliptic curve with a $\Q$-rational cyclic subgroup $Z$ of order $5$ such that $\Gal(\Q(Z)/\Q) = \Z/4$, {with $E(\Q) = \{O\}$} and with $\Sha(\Q, E)$ finite, e.g., the curve with LMFDB label 11.a1\cite[\href{http://www.lmfdb.org/EllipticCurve/Q/11.a2}{Elliptic Curve 11.a2}]{LMFDB}.  Then $Z = O\cup P$, where $P$ is a degree $4$ closed point.   Let $p_1$ and $p_2$ be two primes that are congruent to $1$ modulo $8$ and that split completely in $\kk(P)$.  Finally choose a prime $q \neq p_i$ that is $1$ modulo $8$ and that is a nonsquare modulo $p_1$ and modulo $p_2$.

            We will use this chosen data to construct a conic bundle $X$ over $E$ with the required properties.  Let $\calE$ be the rank $3$ vector bundle $\OO_E \oplus \OO_E \oplus \OO_E(2O)$.  Since $4 O\sim P$, there exists a section $s_2\in \HH^0(E, \OO_E(2O)^{\otimes2})$ such that $\divv(s_2) = P$ and such that $s_2(O) = -p_1p_2$.  Let $s_0 := 1$ and $s_1 := -q$ thought of as sections of $\HH^0(E, \OO_E^{\otimes 2})$.  Then we define $\pi \colon X\to E$ to be the conic bundle given by the vanishing of the section $s:=s_0 + s_1 + s_2\in \HH^0(E, \Sym^2\calE)$ inside $\PP(\calE)$; this conic bundle is smooth by~\cite[Lemma 3.1]{Poonen-Chatelet}. 

	        Since $s$ is invertible on $E - P$, the morphism $\pi$ is smooth away from $P$. Because $P$ is a single closed point, it follows that $\pi^*\colon \Br E \to \Br X$ is surjective (see \cite[Proposition 2.2(i)]{CPS}). Hence, $X(\A_{\Q})^{\Br_{6}} \neq\emptyset$ if and only if $\pi(X(\A_{\Q}))\cap E(\A_{\Q})^{\Br_{6}} \neq \emptyset$. 

	        Since the $p_i$ split completely in $\kk(P)$, the connected components of $P \otimes k_{p_i}$ are points in $E(\Q_{p_i})$. Consider an adelic point $(P_v)\in E(\A_{\Q})$ with $P_v = O$ for all $v\neq p_1,p_2$ and $P_v$ a connected component of $P \otimes k_v$ for $v = p_1,p_2$. Then $(P_v)$ is $5$-torsion and, since $5$ is relatively prime to $6$, the point $(P_v)$ is $6$-divisible in $E(\A_\Q)$. Since $\Sha(\Q,E)$ is finite, descent theory implies that $(P_v) \in E(\A_\Q)^{\Br_{6}}$  (cf. the proof of Proposition~\ref{prop:BSfibration}).

            Since $p_i$ splits completely in $\kk(P)$, each connected component of $X_P \otimes \Q_{p_i}$ is a pair of lines intersecting at a unique point, which must be defined over $\Q_{p_i}$. In particular, $P_{p_i}\in \pi(X(\Q_{p_i}))$ for $i = 1, 2$.  Further, by construction, the fiber above $O$ is given by
            \[
                X_O : w_0^2 - qw_1^2 = p_1p_2 w_2^2.
            \]
            By the choice of $p_1, p_2$ and $q$, $X_O(\Q_{v})\neq \emptyset$ exactly when $v\neq p_1,p_2$.  Hence, $(P_v)\in \pi(X(\A_\Q))$ and so $X(\A_{\Q})^{\Br_{6}} \neq\emptyset$.

            {To prove that $X$ does not satisfy $\BMprimary{6}$,} it remains to show that $X(\A_{\Q})^{\Br} = \emptyset$, which, by the same argument as above, is equivalent to showing that $\pi(X(\A_{\Q})) \cap E(\A_{\Q})^{\Br} = \emptyset$.  Since $\Sha(k, E)$ is finite, $E(\A_{\Q})^{\Br} = \prod_{p<\infty}\{O\} \times C_O,$ where $C_O \subset E(\R)$ is the connected component of $O$.  However, {since $s_2(P) = 0$ and $q\notin \F_{p_i}^{\times2}$ for $i=1,2$}, $X_O(\Q_{p_1}) = X_O(\Q_{p_2}) = \emptyset,$ so $X(\A_{\Q})^{\Br} = \emptyset$.

            Now we will show that $X$ has a globally generated $k$-rational ample line bundle of degree $12$.  Consider the morphism $f\colon X\to \PP^3 \times \PP^1$ that is the composition of the following maps 
            \[
                X \hookrightarrow \PP(\OO_E \oplus \OO_E \oplus \OO_E(2\infty)) \to \PP(\OO_E^{\oplus4}) \isom \PP^3 \times E \stackrel{(\textup{id}, x)}{\longrightarrow} \PP^3 \times \PP^1,
            \]
            where the second map (from left to right) is induced by a surjection $\OO_E^{\oplus2} \to \OO_E(2O)$ and the morphism $g$ that is the composition of $f$ with the Segre embedding $\PP^3 \times \PP^1\hookrightarrow \PP^7$. Note that $f$, and therefore $g$, is $2$-to-$1$ onto its image, so $\calL := g^*\OO_{\PP^7}(1)$ is a globally generated ample line bundle on $X$.

            The image of $f$ is the complete intersection of a $(2,0)$ hypersurface, which comes from the quadric relation $s$, and a $(1,1)$ hypersurface, which comes from the kernel of $\OO_E^{\oplus2} \to \OO_E(2O)$.  Since a quadric surface in $\PP^3$ is geometrically isomorphic to $\PP^1 \times \PP^1$, the image of $g$ is geometrically isomorphic to a hyperplane intersected with the image $Y$ of $\PP^1\times \PP^1 \times \PP^1 \hookrightarrow\PP^7$.  Since the Hilbert polynomial of $Y$ is $(1 + x)^3$, the degree of $Y$, and hence the degree of $\im g$, is $3! = 6$.  Therefore, $\deg(\calL) = 12$ and so degrees do \emph{not} capture the Brauer-Manin obstruction to rational points on $X$.
        \end{proof}

\appendix
\section{Proof of Theorem~\ref{thm:KummerGeneral}, By Alexei N. Skorobogatov}

The following statement equivalent to Theorem~\ref{thm:KummerGeneral}
is a common generalisation of 
\cite[Thm. 3.3]{SZ16} of Skorobogatov and Zarhin and Theorem~\ref{thm:MainKummer} of Creutz and Viray.

Let $\Br(X)_{\rm odd}$ be
the subgroup of $\Br(X)$ formed by the elements of odd order.

\begin{Theorem}
Let $A$ be an abelian variety of dimension $>1$ over a number field $k$.
Let $X$ be the Kummer variety attached to a 2-covering of $A$.
If $B$ is a subgroup of $\Br(X)$ such that $X(\A_k)^B\not=\emptyset$,
then $X(\A_k)^{B+\Br(X)_{\rm odd}}\not=\emptyset$.
\end{Theorem}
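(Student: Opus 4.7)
The plan is to combine the $2$-descent lifting technique of \cite[Thm.~3.3]{SZ16} (which handles the case $B=0$) with Theorem~\ref{thm:PreciseMainPHS} on torsors under abelian varieties (which handles the case where $B=(\Br X)[2^\infty]$). In the notation of \S\ref{sec:TwistedKummers}, let $\rho\colon V\to A$ be the $2$-covering defining $X$, let $\widetilde V$ be the blow-up of $V$ at $T=\rho^{-1}(0_A)$, and let $\pi\colon\widetilde V\to X$ be the associated double cover, with quadratic twists $\pi^a\colon\widetilde V^a\to X$ parameterized by $a\in k^\times/k^{\times 2}$. Each $\widetilde V^a$ is birational to a $k$-torsor $V^a$ under $A$ of period dividing~$2$.

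The first step is a lifting: from the hypothesis $X(\A_k)^B\neq\emptyset$, I would produce a twist parameter $a\in k^\times/k^{\times 2}$ and an adelic point $(y_v)\in V^a(\A_k)$ whose image $\pi^a((y_v))$ still lies in $X(\A_k)^B$. By functoriality of the Brauer-Manin pairing, such $(y_v)$ then automatically lies in $V^a(\A_k)^{(\pi^a)^*(B)}$. This lift should be extracted by adapting the Skorobogatov--Swinnerton-Dyer argument underlying Proposition~\ref{prop:SSD} and \cite[Thm.~3.3]{SZ16}, using finiteness of $(\Br X/\Br_0 X)[2^\infty]$ for Kummer varieties \cite[Cor.~2.8]{SZ16}, Lemma~\ref{lem:compactness} to reduce to finite $B$, continuity of Brauer evaluation, and weak approximation.

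The second step applies Theorem~\ref{thm:PreciseMainPHS} on $V^a$. Set $B':=(\pi^a)^*(B)\subset\Br V^a$ and $C:=B'+\Br(V^a)_{\rm odd}$. Since odd-order classes contribute nothing $2$-primary, $C[2^\infty]=B'[2^\infty]$, so the first step gives $V^a(\A_k)^{C[2^\infty]}\supset V^a(\A_k)^{B'}\neq\emptyset$. Because $V^a$ is birational to a $k$-torsor under $A$ of period dividing $2$, Theorem~\ref{thm:PreciseMainPHS} (applied with $P=2$ to the subgroup $C$) upgrades this to $V^a(\A_k)^C\neq\emptyset$. Pushing down via $\pi^a$ then concludes the proof: one has $(\pi^a)^*(B+\Br(X)_{\rm odd})\subset B'+\Br(V^a)_{\rm odd}=C$ (the right-hand inclusion using \cite[Prop.~1.1]{ISZ}: pullback by the degree-$2$ map $\pi^a$ sends odd-order Brauer classes to odd-order Brauer classes), so $\pi^a((z_v))\in X(\A_k)^{B+\Br(X)_{\rm odd}}$ for any $(z_v)\in V^a(\A_k)^C$.

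The main obstacle is the lifting in step one: the classical form of the SSD argument uses orthogonality of the input adelic point to all of $(\Br X)[2^\infty]$---specifically to the $2$-torsion classes $(c,f)_2$ that control, by global reciprocity, the existence of a global twist parameter---whereas we are only given orthogonality to $B$. Extracting a workable global twist from $B$-orthogonality alone, while keeping the perturbed adelic point inside $X(\A_k)^B$, is the essential new ingredient and the point at which Lemma~\ref{lem:killntors} of the main paper is invoked, effectively allowing one to absorb the unwanted $B[2^\perp]$-contributions on the torsor side by a suitable isogeny before checking the reciprocity identity. Once the lift is established, the remainder is a formal consequence of Theorem~\ref{thm:PreciseMainPHS} and the functoriality of the Brauer-Manin pairing.
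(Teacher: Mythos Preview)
Your second and third steps are sound, but the first step has a genuine gap that Lemma~\ref{lem:killntors} does not close. After reducing to $B\subset(\Br X)[2^\infty]$, the classes $(c,f)_2$ that control the existence of a \emph{global} twist parameter are $2$-torsion classes on $X$ that need not lie in $B$; there is no ``$B[2^\perp]$-contribution'' to absorb. Lemma~\ref{lem:killntors} produces an odd-degree isogeny on the torsor side that kills odd torsion and fixes the $2$-primary part of $\Br V/\Br_0 V$; it says nothing about the $2$-torsion reciprocity sums on $X$ that obstruct gluing the local twist parameters $\alpha_v$ into a single $a\in k^\times/k^{\times2}$. So you have not explained how to obtain a single $V^a$ with $V^a(\A_k)^{(\pi^a)^*B}\neq\emptyset$, and without that your invocation of Theorem~\ref{thm:PreciseMainPHS} has no variety to which it applies.

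The paper's proof sidesteps this entirely by \emph{not} seeking a global twist. For each place $v$ one lifts $P_v$ to $R_v\in Y_{\alpha_v}(k_v)$ on a (possibly different) twist $Y_{\alpha_v}$, taken to be defined over $k$ by weak approximation. One then applies $[n^s]$ on each $Y_{\alpha_v}$ separately, where $s$ is chosen via Lemma~\ref{lem:killntors} so that $[n^s]^*$ is the identity on $\pi_{\alpha_v}^*(B)$; this guarantees $\calB(Q_v)=\calB(P_v)$ for $\calB\in B$ when one pushes the resulting points $M_v=[n^s]R_v$ back down to $Q_v\in X(k_v)$. The argument of \cite[Thm.~3.3]{SZ16} then shows $(Q_v)\in X(\A_k)^{\Br(X)_{\rm odd}}$. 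The point is that all twists map to the common target $X$, so an adelic point on $X$ can be assembled from local points on different twists; Theorem~\ref{thm:PreciseMainPHS} is not invoked as a black box, rather its engine (Lemma~\ref{lem:killntors}) is applied directly, place by place.
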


\begin{proof}
By \cite[Corollary 2.8]{SZ16} the group $\Br(X)/\Br_0(X)$ is finite.
Hence we can assume without loss of generality
that $B$ is finite. Replacing $B$ by its 2-primary torsion subgroup we
can assume that $B\subset\Br(X)[2^\infty]$. There exists
an odd integer $n$ such that $\Br(X)[n]$ and $\Br(X)_{\rm odd}$
have the same image in $\Br(X)$.

By the definition of a Kummer variety \cite[Def. 2.1]{SZ16} there exists a 
double cover $\pi:Y'\to X$, where $\sigma:Y'\to Y$ is the blowing-up of 
a 2-covering $f:Y\to A$ of $A$ in 
$f^{-1}(0)$, such that the branch locus of $\pi$ is the exceptional divisor of $\sigma$.
Since $n$ is odd and $Y$ is a torsor for $A$ of period dividing 2, 
we have a well defined morphism $[n]:Y\to Y$
compatible with multiplication by $n$ on $A$.

Let $(P_v)\in X(\A_k)$.
For each $v$ there is a class $\alpha_v\in\HH^1(k_v,\mu_2)=k_v^*/k_v^{*2}$
such that $P_v$ lifts to a $k_v$-point on the quadratic twist $Y'_{\alpha_v}$,
which is a variety defined over $k_v$. By weak approximation we can assume
that $\alpha_v$ comes from $k^*/k^{*2}$ and hence $Y'_{\alpha_v}$
is actually defined over $k$.
Let $\pi_{\alpha_v}:Y'_{\alpha_v}\to X$ be the natural double cover.

Let $D=\pi_{\alpha_v}^*(B)$ be the image of $B$ in $\Br(Y_{\alpha_v})$. 
We need the following corollary of Lemma~\ref{lem:killntors}.

\begin{Lemma}\label{lem:appendix}
There exists a positive integer $s$ such that 
$[n^s]^*:\Br(Y_{\alpha_v})\to\Br(Y_{\alpha_v})$ restricted to
$D$ is the identity map on $D$.
\end{Lemma}

\begin{proof}
By Lemma~\ref{lem:killntors} for $d=2$ there is 
a positive integer $a$ such that $[n^a]$
induces the identity on the quotient of $D$ by $D\cap\Br_0(Y_{\alpha_v})$. 
Hence for each $\calA\in D$ there is an $\calA_0\in\Br_0(Y_{\alpha_v})$ such that
$[n^a]^*\calA=\calA+\calA_0$. Let $b$ be the product of orders of $\calA_0$,
where $\calA$ ranges over all elements of $D$. Then $s=ab$ 
satisfies the conclusion of the lemma.
\end{proof}

Now assume that $(P_v)\in X(\A_k)^B$. For each place $v$ of $k$
choose $R_v\in Y_{\alpha_v}(k_v)$ such that $\pi_{\alpha_v}(R_v)=P_v$. 
In the proof of \cite[Thm. 3.3]{SZ16} instead of $M_v=[n]R_v$ put
$M_v=[n^s]R_v$, where $s$ is as in Lemma \ref{lem:appendix}.
This definition has all the properties needed for the proof of \cite[Thm. 3.3]{SZ16}
with the additional property
\begin{equation}
\calA(M_v)=([n^s]^*\calA)(R_v)=\calA(R_v) \label{eq1}
\end{equation} for each $\calA\in \pi_{\alpha_v}^*(B)$.
We now proceed exactly as in the proof of \cite[Thm. 3.3]{SZ16}.
By a small deformation we can assume that $M_v$ avoids the exceptional 
divisor of $Y'_{\alpha_v}\to Y_{\alpha_v}$. Then $M_v$ lifts to a unique
point $M'_v\in Y'_{\alpha_v}(k_v)$.
Let $Q_v=\pi_{\alpha_v}(M'_v)\in X(k_v)$. By (\ref{eq1}) 
for each $\calB\in B$ we have $\calB(Q_v)=\calB(P_v)$, hence
$(Q_v)\in X(\A_k)^B$.
The proof of \cite[Thm. 3.3]{SZ16} shows that $(Q_v)\in X(\A_k)^{\Br(X)_{\rm odd}}$.
Thus $(Q_v)\in X(\A_k)^{B+\Br(X)_{\rm odd}}$.
\end{proof}


	\begin{bibdiv}
		\begin{biblist}

            \bib{WeakFactorizationTheorem}{article}{
               author={Abramovich, Dan},
               author={Karu, Kalle},
               author={Matsuki, Kenji},
               author={W{\l}odarczyk, Jaros{\l}aw},
               title={Torification and factorization of birational maps},
               journal={J. Amer. Math. Soc.},
               volume={15},
               date={2002},
               number={3},
               pages={531--572 (electronic)},
               issn={0894-0347},
               review={\MR{1896232}},
               doi={10.1090/S0894-0347-02-00396-X},
            }

            \bib{BS-bielliptic}{article}{
               author={Basile, Carmen Laura},
               author={Skorobogatov, Alexei N.},
               title={On the Hasse principle for bielliptic surfaces},
               conference={
                  title={Number theory and algebraic geometry},
               },
               book={
                  series={London Math. Soc. Lecture Note Ser.},
                  volume={303},
                  publisher={Cambridge Univ. Press, Cambridge},
               },
               date={2003},
               pages={31--40},
               review={\MR{2053453 (2005e:14056)}},
            }
            
            \bib{Beauville-ComplexSurfaces}{book}{
               author={Beauville, Arnaud},
               title={Complex algebraic surfaces},
               series={London Mathematical Society Student Texts},
               volume={34},
               edition={2},
               note={Translated from the 1978 French original by R. Barlow, with
               assistance from N. I. Shepherd-Barron and M. Reid},
               publisher={Cambridge University Press, Cambridge},
               date={1996},
               pages={x+132},
               isbn={0-521-49510-5},
               isbn={0-521-49842-2},
               review={\MR{1406314}},
               doi={10.1017/CBO9780511623936},
            }
            
            \bib{Berkovich}{article}{
               author={Berkovi{\v{c}}, Vladimir G.},
               title={The Brauer group of abelian varieties},
               language={Russian},
               journal={Funkcional Anal. i Prilo\v zen.},
               volume={6},
               date={1972},
               number={3},
               pages={10--15},
               issn={0374-1990},
               review={\MR{0308134}},
            }

\bib{CasselsIV}{article}{
   author={Cassels, J. W. S.},
   title={Arithmetic on curves of genus $1$. IV. Proof of the
   Hauptvermutung},
   journal={J. Reine Angew. Math.},
   volume={211},
   date={1962},
   pages={95--112},
   issn={0075-4102},
   review={\MR{0163915}},
   doi={10.1515/crll.1962.211.95},
}

            \bib{ClarkSharif2010}{article}{
               author={Clark, Pete L.},
               author={Sharif, Shahed},
               title={Period, index and potential. III},
               journal={Algebra Number Theory},
               volume={4},
               date={2010},
               number={2},
               pages={151--174},
               issn={1937-0652},
               review={\MR{2592017}},
               doi={10.2140/ant.2010.4.151},
            }

            \bib{CT95}{article}{
               author={Colliot-Th{\'e}l{\`e}ne, Jean-Louis},
               title={L'arithm\'etique du groupe de Chow des z\'ero-cycles},
               language={French},
               note={Les Dix-huiti\`emes Journ\'ees Arithm\'etiques (Bordeaux, 1993)},
               journal={J. Th\'eor. Nombres Bordeaux},
               volume={7},
               date={1995},
               number={1},
               pages={51--73},
               issn={1246-7405},
               review={\MR{1413566}},
            }

	\bib{CT97}{article}{
   author={Colliot-Th\'el\`ene, Jean-Louis},
   title={Conjectures de type local-global sur l'image des groupes de Chow
   dans la cohomologie \'etale},
   language={French},
   conference={
      title={Algebraic $K$-theory},
      address={Seattle, WA},
      date={1997},
   },
   book={
      series={Proc. Sympos. Pure Math.},
      volume={67},
      publisher={Amer. Math. Soc., Providence, RI},
   },
   date={1999},
   pages={1--12},
   review={\MR{1743234}},
}

			\bib{CPS}{article}{
			   author={Colliot-Th\'el\`ene, Jean-Louis},
			   author={P\'al, Ambrus},
			   author={Skorobogatov, Alexei N.},
			   title={Pathologies of the Brauer-Manin obstruction},
			   journal={Math. Z.},
			   volume={282},
			   date={2016},
			   number={3-4},
			   pages={799--817},
			   issn={0025-5874},
			   review={\MR{3473644}},
			   doi={10.1007/s00209-015-1565-x},
			}

            \bib{CTS81}{article}{
               author={Colliot-Th{\'e}l{\`e}ne, Jean-Louis},
               author={Sansuc, Jean-Jacques},
               title={On the Chow groups of certain rational surfaces: a sequel to a
               paper of S. Bloch},
               journal={Duke Math. J.},
               volume={48},
               date={1981},
               number={2},
               pages={421--447},
               issn={0012-7094},
               review={\MR{620258}},
            }

            \bib{Corn07}{article}{
               author={Corn, Patrick},
               title={The Brauer-Manin obstruction on del Pezzo surfaces of degree 2},
               journal={Proc. Lond. Math. Soc. (3)},
               volume={95},
               date={2007},
               number={3},
               pages={735--777},
               issn={0024-6115},
               review={\MR{2368282}},
               doi={10.1112/plms/pdm015},
            }

	\bib{CornNakahara}{misc}{
	   author={Corn, Patrick},
	   author={Nakahara, Masahiro},
	   title={Brauer-Manin obstructions on genus-2 K3 surfaces},
	   eprint={arXiv:1710.11116v1},
	   note={preprint}
	}

	\bib{CVV}{article}{
		author={Creutz, Brendan},
		author={Viray, Bianca},
		author={Voloch, Jos\'e Felipe},
		title={The $d$-primary Brauer-Manin obstruction for curves},
		journal={Res number theory},
		date={2018},
		volume={4},
		number={26},
		doi={10.1007/s40993-018-0120-3}
	}

            \bib{Grothendieck}{article}{
               author={Grothendieck, Alexander},
               title={Le groupe de Brauer. III. Exemples et compl\'ements},
               language={French},
               conference={
                  title={Dix expos\'es sur la cohomologie des sch\'emas},
               },
               book={
                  series={Adv. Stud. Pure Math.},
                  volume={3},
                  publisher={North-Holland, Amsterdam},
               },
               date={1968},
               pages={88--188},
               review={\MR{244271}},
            }
            
            \bib{vanHamel}{article}{
               author={van Hamel, Joost},
               title={The Brauer-Manin obstruction for zero-cycles on Severi-Brauer
               fibrations over curves},
               journal={J. London Math. Soc. (2)},
               volume={68},
               date={2003},
               number={no.~2},
               pages={317--337},
               issn={0024-6107},
               review={\MR{1994685}},
            }
            
            \bib{HarariSzamuely2008}{article}{
               author={Harari, David},
               author={Szamuely, Tam{\'a}s},
               title={Local-global principles for 1-motives},
               journal={Duke Math. J.},
               volume={143},
               date={2008},
               number={3},
               pages={531--557},
               issn={0012-7094},
               review={\MR{2423762}},
               doi={10.1215/00127094-2008-028},
            }

            \bib{HarpazSkorobogatov16}{article}{
               author={Harpaz, Yonatan},
               author={Skorobogatov, Alexei N.},
               title={Hasse principle for Kummer varieties},
               journal={Algebra Number Theory},
               volume={10},
               date={2016},
               number={4},
               pages={813--841},
               issn={1937-0652},
               review={\MR{3519097}},
               doi={10.2140/ant.2016.10.813},
            }

            \bib{Hartshorne}{book}{
               author={Hartshorne, Robin},
               title={Algebraic geometry},
               note={Graduate Texts in Mathematics, No. 52},
               publisher={Springer-Verlag, New York-Heidelberg},
               date={1977},
               pages={xvi+496},
               isbn={0-387-90244-9},
               review={\MR{0463157}},
            }

			\bib{Hassett}{article}{
			   author={Hassett, Brendan},
			   title={Rational surfaces over nonclosed fields},
			   conference={
			      title={Arithmetic geometry},
			   },
			   book={
			      series={Clay Math. Proc.},
			      volume={8},
			      publisher={Amer. Math. Soc., Providence, RI},
			   },
			   date={2009},
			   pages={155--209},
			   review={\MR{2498061}},
			}

            \bib{IS}{article}{
               author={Ieronymou, Evis},
               author={Skorobogatov, Alexei N.},
               title={Odd order Brauer-Manin obstruction on diagonal quartic surfaces},
               journal={Adv. Math.},
               volume={270},
               date={2015},
               pages={181--205},
               issn={0001-8708},
               review={\MR{3286534}},
               doi={10.1016/j.aim.2014.11.004},
            }

\bib{IScorrigendum}{article}{
   author={Ieronymou, Evis},
   author={Skorobogatov, Alexei N.},
   title={Corrigendum to ``Odd order Brauer-Manin obstruction on diagonal
   quartic surfaces'' [Adv. Math. 270 (2015) 181--205] [ MR3286534]},
   journal={Adv. Math.},
   volume={307},
   date={2017},
   pages={1372--1377},
   issn={0001-8708},
   review={\MR{3590544}},
   doi={10.1016/j.aim.2016.05.014},
}

            \bib{ISZ}{article}{
               author={Ieronymou, Evis},
               author={Skorobogatov, Alexei N.},
               author={Zarhin, Yuri G.},
               title={On the Brauer group of diagonal quartic surfaces},
               note={With an appendix by Peter Swinnerton-Dyer},
               journal={J. Lond. Math. Soc. (2)},
               volume={83},
               date={2011},
               number={3},
               pages={659--672},
               issn={0024-6107},
               review={\MR{2802504}},
               doi={10.1112/jlms/jdq083},
            }

            \bib{KS86}{article}{
               author={Kato, Kazuya},
               author={Saito, Shuji},
               title={Global class field theory of arithmetic schemes},
               conference={
                  title={Applications of algebraic $K$-theory to algebraic geometry and
                  number theory, Part I, II},
                  address={Boulder, Colo.},
                  date={1983},
               },
               book={
                  series={Contemp. Math.},
                  volume={55},
                  publisher={Amer. Math. Soc., Providence, RI},
               },
               date={1986},
               pages={255--331},
               review={\MR{862639}},
               doi={10.1090/conm/055.1/862639},
            }

            \bib{Lang}{article}{
               author={Lang, Serge},
               title={Some applications of the local uniformization theorem},
               journal={Amer. J. Math.},
               volume={76},
               date={1954},
               pages={362--374},
               issn={0002-9327},
               review={\MR{0062722}},
            }

            \bib{LangTate1958}{article}{
               author={Lang, Serge},
               author={Tate, John},
               title={Principal homogeneous spaces over abelian varieties},
               journal={Amer. J. Math.},
               volume={80},
               date={1958},
               pages={659--684},
               issn={0002-9327},
               review={\MR{0106226}},
            }

		\bib{Lazarsfeld}{book}{
		   author={Lazarsfeld, Robert},
		   title={Positivity in algebraic geometry. II},
		   series={Ergebnisse der Mathematik und ihrer Grenzgebiete. 3. Folge. A
		   Series of Modern Surveys in Mathematics [Results in Mathematics and
		   Related Areas. 3rd Series. A Series of Modern Surveys in Mathematics]},
		   volume={49},
		   note={Positivity for vector bundles, and multiplier ideals},
		   publisher={Springer-Verlag, Berlin},
		   date={2004},
		   pages={xviii+385},
		   isbn={3-540-22534-X},
		   review={\MR{2095472}},
		}
		
            \bib{LMFDB}{misc}{
              label    = {LMFDB},
              author       = {The {LMFDB Collaboration}},
              title        =  {The L-functions and Modular Forms Database},
              howpublished = {\url{http://www.lmfdb.org}},
              year         = {2013},
              note         = {[Online; accessed 3 January 2016]},
            }
            
            \bib{Manin1971}{article}{
               author={Manin, Yuri I.},
               title={Le groupe de Brauer-Grothendieck en g\'eom\'etrie diophantienne},
               conference={
                  title={Actes du Congr\`es International des Math\'ematiciens},
                  address={Nice},
                  date={1970},
               },
               book={
                  publisher={Gauthier-Villars, Paris},
               },
               date={1971},
               pages={401--411},
               review={\MR{0427322}},
            }



        	\bib{Masahiro}{misc}{
        		author={Nakahara, Masahiro},
        		title={Brauer classes that never obstruct the Hasse principle for genus $1$ fibrations},
        		note={preprint},
		eprint={arXiv:1706.07019v2}
        	}

            \bib{Nishimura1955}{article}{
               author={Nishimura, Hajime},
               title={Some remarks on rational points},
               journal={Mem. Coll. Sci. Univ. Kyoto. Ser. A. Math.},
               volume={29},
               date={1955},
               pages={189--192},
               review={\MR{0095851}},
            }

%
		
			\bib{Poonen-Chatelet}{article}{
			   author={Poonen, Bjorn},
			   title={Existence of rational points on smooth projective varieties},
			   journal={J. Eur. Math. Soc. (JEMS)},
			   volume={11},
			   date={2009},
			   number={3},
			   pages={529--543},
			   issn={1435-9855},
			   review={\MR{2505440}},
			   doi={10.4171/JEMS/159},
			}

              	\bib{Sansuc1981}{article}{
               author={Sansuc, Jean-Jacques},
               title={Groupe de Brauer et arithm\'etique des groupes alg\'ebriques
               lin\'eaires sur un corps de nombres},
               language={French},
               journal={J. Reine Angew. Math.},
               volume={327},
               date={1981},
               pages={12--80},
               issn={0075-4102},
               review={\MR{631309}},
               doi={10.1515/crll.1981.327.12},
            }

            \bib{Serrano1990}{article}{
               author={Serrano, Fernando},
               title={Divisors of bielliptic surfaces and embeddings in ${\bf P}^4$},
               journal={Math. Z.},
               volume={203},
               date={1990},
               number={3},
               pages={527--533},
               issn={0025-5874},
               review={\MR{1038716}},
               doi={10.1007/BF02570754},
            }

            \bib{Serrano-IsotrivialFiberedSurfaces}{article}{
               author={Serrano, Fernando},
               title={Isotrivial fibred surfaces},
               journal={Ann. Mat. Pura Appl. (4)},
               volume={171},
               date={1996},
               pages={63--81},
               issn={0003-4622},
               review={\MR{1441865}},
               doi={10.1007/BF01759382},
            }

            \bib{TorsorsAndRationalPoints}{book}{
               author={Skorobogatov, Alexei N.},
               title={Torsors and rational points},
               series={Cambridge Tracts in Mathematics},
               volume={144},
               publisher={Cambridge University Press, Cambridge},
               date={2001},
               pages={viii+187},
               isbn={0-521-80237-7},
               review={\MR{1845760 (2002d:14032)}},
               doi={10.1017/CBO9780511549588},
            }
		
            \bib{Skorobogatov}{article}{
               author={Skorobogatov, Alexei N.},
               title={Beyond the Manin obstruction},
               journal={Invent. Math.},
               volume={135},
               date={1999},
               number={2},
               pages={399--424},
               issn={0020-9910},
               review={\MR{1666779}},
               doi={10.1007/s002220050291},
            }

            \bib{SSD-2Descent}{article}{
               author={Skorobogatov, Alexei N.},
               author={Swinnerton-Dyer, Peter},
               title={2-descent on elliptic curves and rational points on certain Kummer
               surfaces},
               journal={Adv. Math.},
               volume={198},
               date={2005},
               number={2},
               pages={448--483},
               issn={0001-8708},
               review={\MR{2183385 (2006g:11129)}},
               doi={10.1016/j.aim.2005.06.005},
            }
		
            \bib{SZ-Finiteness}{article}{
               author={Skorobogatov, Alexei N.},
               author={Zarhin, Yuri G.},
               title={A finiteness theorem for the Brauer group of abelian varieties and
               $K3$ surfaces},
               journal={J. Algebraic Geom.},
               volume={17},
               date={2008},
               number={3},
               pages={481--502},
               issn={1056-3911},
               review={\MR{2395136 (2009d:14016)}},
               doi={10.1090/S1056-3911-07-00471-7},
            }
            
            \bib{SZ16}{misc}{
            	author={Skorobogatov, Alexei N.},
	         	author={Zarhin, Yuri G.},
	   	        title={Kummer varieties and their Brauer groups},
                eprint={arXiv:1612.05993},
                note={preprint}
	        }
%
		
            \bib{SD-BrauerCubic}{article}{
               author={Swinnerton-Dyer, Peter},
               title={The Brauer group of cubic surfaces},
               journal={Math. Proc. Cambridge Philos. Soc.},
               volume={113},
               date={1993},
               number={3},
               pages={449--460},
               issn={0305-0041},
               review={\MR{1207510}},
               doi={10.1017/S0305004100076106},
            }
            
            \bib{VA-dPNotes}{article}{
               author={V{\'a}rilly-Alvarado, Anthony},
               title={Arithmetic of del Pezzo surfaces},
               conference={
                  title={Birational geometry, rational curves, and arithmetic},
               },
               book={
                  publisher={Springer, New York},
               },
               date={2013},
               pages={293--319},
               review={\MR{3114932}},
            }
            
            \bib{Wittenberg-Duke}{article}{
               author={Wittenberg, Olivier},
               title={Zéro-cycles sur les fibrations au-dessus d'une courbe de genre
               quelconque},
               language={French, with English and French summaries},
               journal={Duke Math. J.},
               volume={161},
               date={2012},
               number={no.~11},
               pages={2113--2166},
               issn={0012-7094},
               review={\MR{2957699}},
            }
		\end{biblist}
	\end{bibdiv}

\end{document}